\newcommand{\AAA}{{\mathcal A}}
\newcommand{\AC}{{\mathcal A}^{\mathsf C}}
\newcommand{\II}{{\mathbb I}}
\newcommand{\DA}{\partial \mathcal{A}}
\newcommand{\DAO}{\left [\DA\right]_{0}}
\newcommand{\DAM}{\left [\DA\right]_{\mathcal{-}}}
\newcommand{\ee}{\varepsilon}
\newcommand{\cl}{\mathsf{cl}}
\newcommand{\Int}{\mathsf{int}}
\newcommand{\sgn}{{\mathsf{sign}}}
\newcommand{\NN}{{\mathbb N}}
\newcommand{\RR}{{\mathbb R}}
\newcommand{\KK}{{\mathcal K}}
\newcommand{\UU}{{\mathcal U}}
\newcommand{\XX}{{\mathcal X}}
\newcommand{\ds}{\displaystyle}
\newcommand{\Rset}{{\mathbb R}}
\newcommand{\bBox}{\hbox{\vrule width1.3ex height1.3ex}}
\newtheorem{thm}{Theorem}[section] 
\newtheorem{pr}{Proposition}[section] 
\newtheorem{cor}{Corollary}[section] 
\newtheorem{lem}{Lemma}[section] 
\newtheorem{defn}{Definition}[section] 
\newtheorem{rem}{Remark}[section] 
\title{\textbf{On Barriers in State and Input Constrained Nonlinear  Systems}\thanks{This work was partially done while the authors were participating in the Bernoulli Program: ``Advances in the Theory of Control, Signals, and Systems, with Physical Modeling'' of the Bernoulli Center, EPFL, Switzerland in April 2009.}}
\author{Jos\'{e} A. De Don\'{a}\thanks{School of Electrical Engineering and Computer Science, Faculty of Engineering and Built Environment, Callaghan, NSW 2308, Australia.
Email: \texttt{\small Jose.Dedona@newcastle.edu.au}.
Part of this work was done while this author was visiting CAS from September 2008 to June 2009.}
\and Jean L\'{e}vine\thanks{CAS, Math\'{e}matiques et Syst\`{e}mes,
 Mines-ParisTech, 35, rue Saint-Honor\'{e}, 77300 Fontainebleau, France.
Email: \texttt{\small  jean.levine@mines-paristech.fr}.}}
\date{February 5, 2013}
\begin{document}

\maketitle

\begin{abstract}
In this paper, the problem of state and input
constrained control is addressed, with multidimensional constraints. We obtain a local description of the boundary of the admissible subset of the state space where the state and input constraints can be satisfied \emph{for all times}.
This boundary is made of two disjoint parts: the subset of the state constraint boundary on which there are trajectories pointing towards the interior of the admissible set or tangentially to it;  and a barrier, namely a semipermeable surface which is constructed via a minimum-like principle.
\end{abstract}

\paragraph{Keywords} 
state and input constraints, barrier, admissible set, nonlinear systems.

\section{Introduction}
In this paper, the problem of state and input constrained control is 
addressed. The objective is to describe the admissible trajectories 
of a constrained control system with as much ``freedom" as possible, 
without introducing additional exogenous elements. State constraints 
have mostly been addressed theoretically in the framework of optimal 
control (see e.g. Chapter~VI of \cite{PBGM}; see also 
\cite{Clarke,Vinter,Trelat} for extensions and more modern 
presentations). The approach proposed here is complementary, since 
we focus attention on the characterisation and computation of the 
boundary of the admissible region, namely the subset of the state space where the state and input constraints can be satisfied \emph{for all times}. A distinctive feature of the problem 
considered in this paper, compared to optimal control, is that we 
are in a \emph{qualitative} situation, since there is no \emph{a priori} 
notion of a quantitative objective function to be optimised, nor of a target to reach.

Related mathematical ideas may be found in the characterisation of 
other aspects of control systems, namely attainable regions 
\cite{Fein}, reachable sets \cite{Son}, invariant sets \cite{Blan-book,WB} and problems related 
to optimal control \cite{PBGM,Gam} and differential games \cite{Isaacs,MBT}.
Some of the results presented in this paper may be also interpreted in 
terms of viability kernels, a major concept of viability theory \cite{Aubin}. 
In particular, see the work of Quincampoix \cite{Quincampoix_siam} on 
differential inclusions and target problems. Numerical studies of 
constrained trajectories, extending the work of Quincampoix 
\cite{Quincampoix_siam}, may be found, e.g., in~\cite{Cr}. 

In contrast with the latter works, by dealing with ordinary differential 
equations we develop a direct mixed topological and geometric approach leading to 
computable conditions. We prove a \emph{minimum-like principle} satisfied 
by the barrier, the semipermeable subset of the boundary of the admissible 
set.

The remainder of the paper is organised as follows. In Section~\ref{sec:MotEx}
we introduce an example that provides physical intuition and motivates the 
problem and the main ideas. In Section~\ref{sec:ConsDynCon} we more precisely 
define the problem we want to address, introduce the notation and state some 
assumptions on the dynamical system and the constraints. 
In Section~\ref{sec:AdmSetTopol} we investigate the topological properties of 
the admissible set. In Sections~\ref{sec:BoundAdmSet} and~\ref{sec:UltTangCond} 
we investigate the properties of the boundary of the admissible region and derive the
conditions under which that boundary intersects the state constraint boundary, 
which we call ultimate tangentiality conditions. In Section~\ref{sec:BarrEqn}
we derive the minimum-like principle condition satisfied by the barrier and discuss its
relationship with notions from optimal control theory. Section~\ref{sec:Examples}
presents a number of examples of linear and nonlinear constrained systems 
with a complete characterisation of the admissible region. Section~\ref{sec:Conclus}
presents the conclusions of the paper, and some technical auxiliary proofs
concerning compactness of solutions of differential equations, followed by some of their variational properties and a recall of the maximum principle, are presented in Appendices~\ref{Append-A} and \ref{Append-B} at the end of the paper.

\section{A motivating example: Single-particle constrained motion}\label{sec:MotEx}

We start our development by considering a simple example,
consisting of a particle of mass $m$ moving in space with velocity $V(t)$. We
also consider that the motion of the particle is constrained to remain
in some region of the space and that we can apply a force $F(t)$ to the
particle in any direction, but with the constraint
$|F(t)|\leq 1$. This scenario is represented in Figure~\ref{fig:Figure_1}.
\begin{figure}[thpb]
\begin{center}
\includegraphics[width=.55\columnwidth]{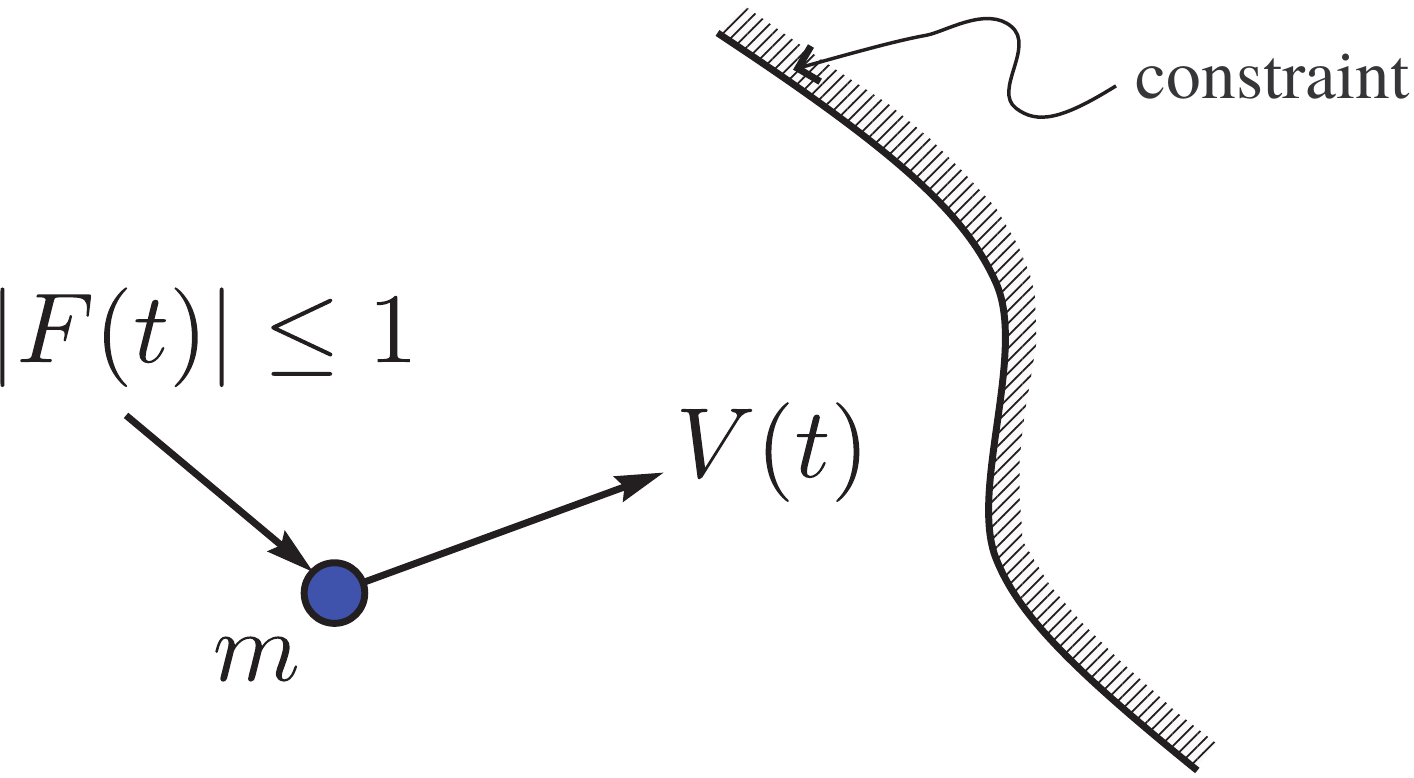}
\caption{Constrained motion of a particle.}
\label{fig:Figure_1}
\end{center}
\end{figure}
An important feature of this problem, that we want to emphasise here, is
that we consider that the constraints (both, in force and position) have
to be satisfied for all times, and not just over a finite time window.

The first question is: Given our knowledge of the position $X(t)$ of the
particle and its velocity $V(t)$ at a given time instant $t$, what action should
we take/start taking, given the additional knowledge of the constraint?
Are we allowed to move into any region of the space, provided we remain
(in the figure above) to the left of the constraint surface?

Consider the limit case  represented in Figure~\ref{fig:Figure_2}.
\begin{figure}[thpb]
\begin{center}
\includegraphics[width=.35\columnwidth]{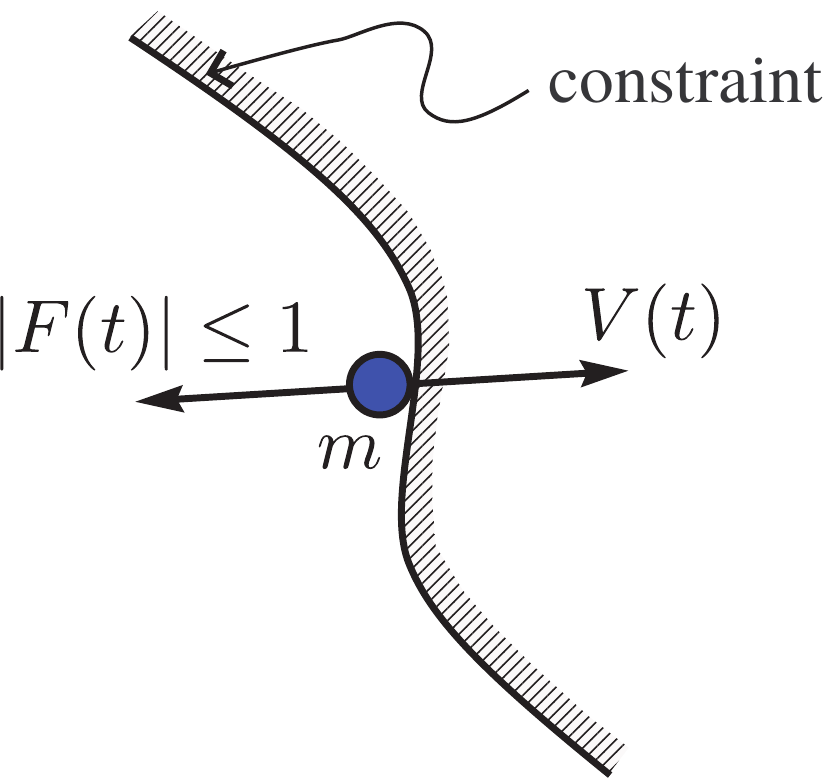}
\caption{A ``limit situation".}
\label{fig:Figure_2}
\end{center}
\end{figure}
It is evident that such a situation can not be allowed, since from
Newton's second law we know it would not be possible
to prevent the particle from entering the forbidden region. (Only a
force infinite in magnitude would be able to instantaneously change the
direction of the particle, a situation that our constrained input does
not allow.)

An intuitively simple solution would be to think of a ``buffer region'' of thickness
$\varepsilon$ that would allow our particle enough time to change direction steered
by the constrained force. As represented in Figure~\ref{fig:Figure_3}, it is to
be expected that such a buffer region will depend on both, the position
$X$ of the particle and its velocity $V$ at that specific position.
\begin{figure}[thpb]
\begin{center}
\includegraphics[width=.40\columnwidth]{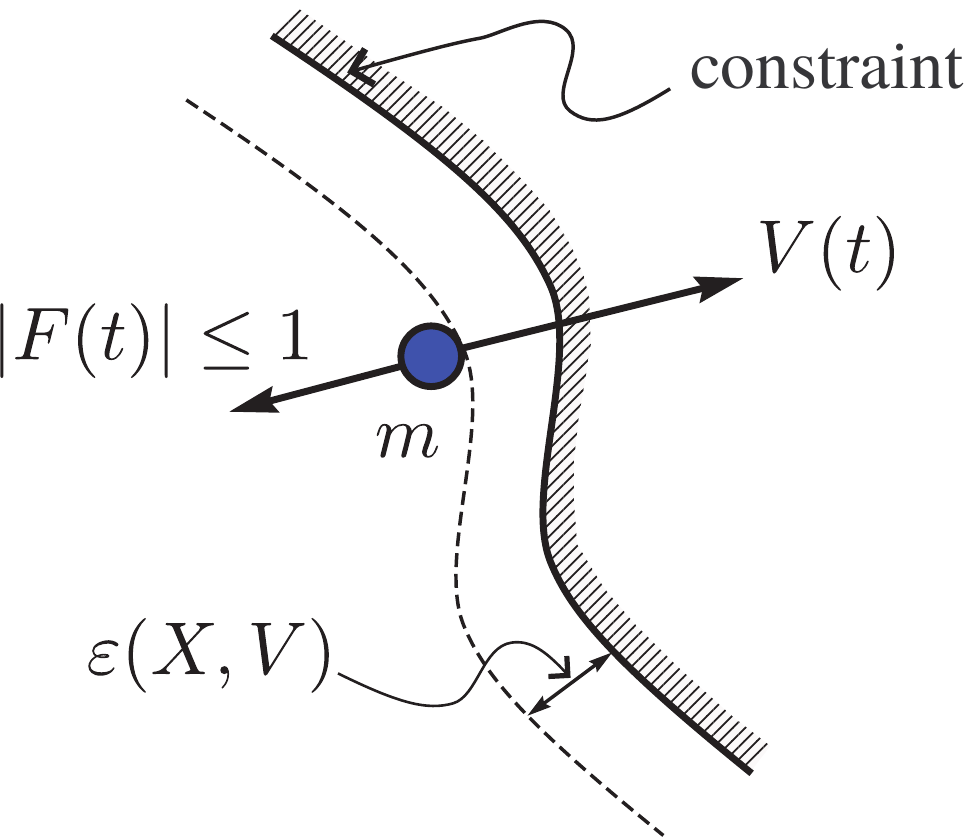}
\caption{A ``buffer" region.}
\label{fig:Figure_3}
\end{center}
\end{figure}
Many ways of constructing such a ``buffer region'' could be devised, for
example one could construct an ``artificial potential'' such that the
particle, once entered into the region, is repelled back or, at least, is not allowed
to overpass the constraint (see e.g. \cite{BMPR}). By means of influencing $F$ we could, for instance,
introduce ``artificial damping'' so as to
dissipate the energy of the particle and prevent it from trespassing
the constraint. However, these---potentially useful in practice---ingenious ways
of tackling the problem would seem to defeat our initial
purpose of ``parameterising with as much freedom as possible" all the possible constrained ``natural"
trajectories of the system (without adding artificial exogenous elements).

Still, motivated by the previous considerations, we will analyse one last
fact related to the simple single-particle example discussed above.
It is obvious that, whatever the particle's trajectories do in
space, the moment they enter in contact with the constraint surface
they must do so with a velocity that is tangential to the constraints
(note that these limit trajectories cannot be perturbed without the risk of violating the constraints). This situation is represented in Figure~\ref{fig:Figure_4}.

\begin{figure}[thpb]
\begin{center}
\includegraphics[width=.55\columnwidth]{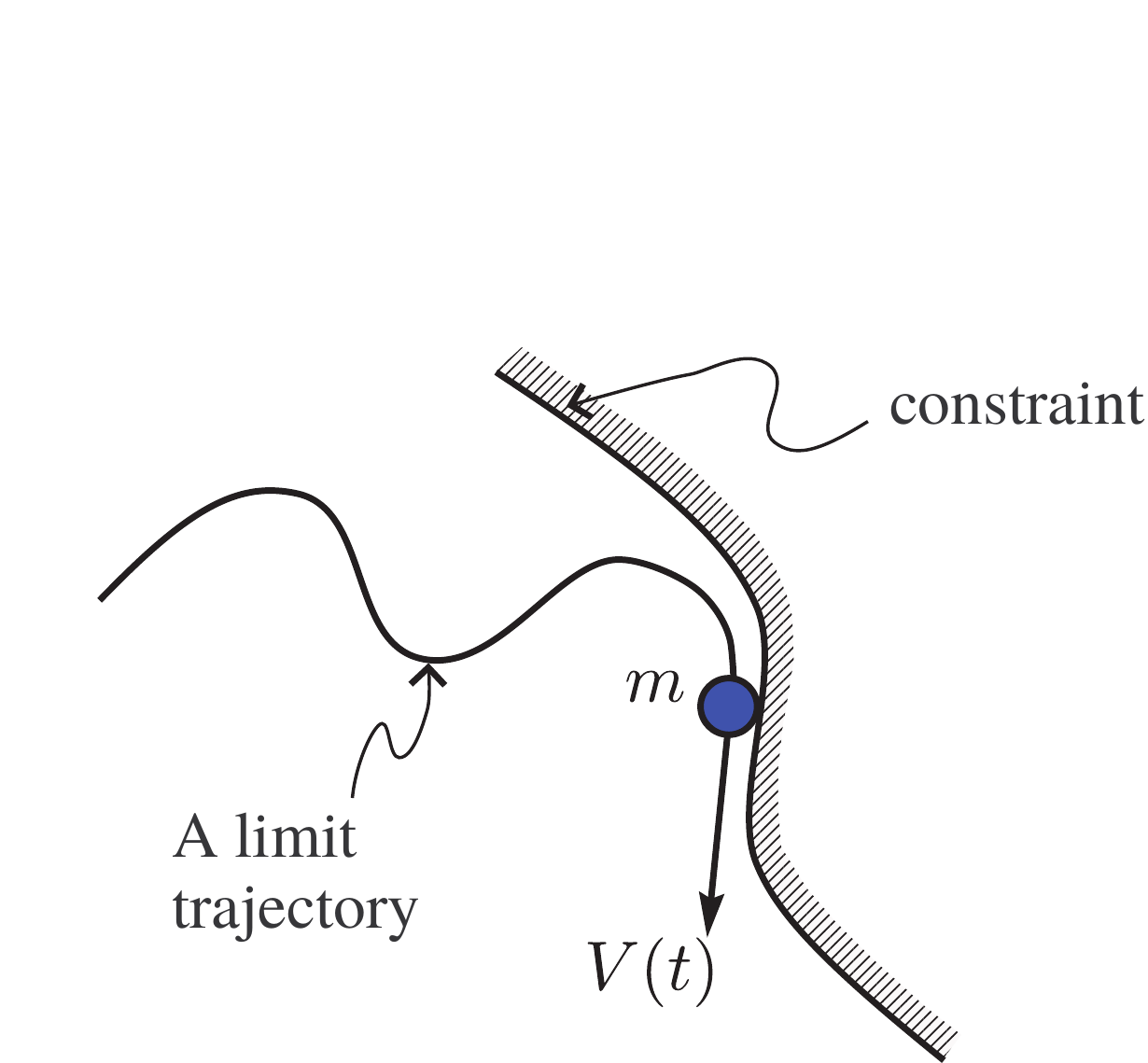}
\caption{A limit trajectory arriving tangentially to the constraint.}
\label{fig:Figure_4}
\end{center}
\end{figure}
In general terms, in this paper we prove that such a tangency property is necessary for a trajectory to remain on the boundary of the \emph{admissible set}, namely the set of all initial states from which there exists a trajectory satisfying the constraints for all times. We further prove a general minimum-like principle to construct this boundary, which arrives tangentially to the constraint.

\section{Constrained dynamical control systems}\label{sec:ConsDynCon}

We consider the following constrained nonlinear system:
\begin{align}
\label{eq:state_space}
  & \dot{x} =  f(x,u), \\
  \label{eq:initial_condition}
  & x(t_0) = x_0, \\
  \label{eq:input_constraint}
 & u  \in \UU, \\
  \label{eq:state_const}
& g_{i}\big(x(t)\big)  \leq   0 \quad \forall t \in [t_0, \infty), \quad \forall i \in \{1, \ldots, p \}
\end{align}
where $x(t)\in \Rset^{n}$, $\Rset^{n}$ being endowed with the usual topology of the Euclidean norm.
We denote by $U_{1}\triangleq \{ v\in \Rset^{m} : \Vert v \Vert \leq1\}$ the closed unit ball\footnote{We keep the same notation $\Vert\cdot\Vert$ for the Euclidean norm of $\Rset^{p}$ for every $p\geq 1$.} of $\Rset^{m}$.
The input function $u$ is assumed to belong to the set $\UU$ of Lebesgue measurable functions from $[t_0, \infty)$ to  $U_{1}$, i.e. $u$ is a measurable function such that $\Vert u(t)\Vert \leq 1$ for all $t\in [t_0, \infty)$. 

We need to recall from \cite{PBGM}  the definition of \emph{Lebesgue} or \emph{regular} point for a given control $u\in \UU$: the time $t\in [t_0,\infty)$ is called a \emph{Lebesgue point} for $u$ if $u$ is continuous at $t$ in the sense that there exists a bounded (possibly empty) subset $I_{0} \subset [t_0,\infty)$, of zero Lebesgue measure, with $t\not \in I_{0}$, such that $u(t)= \lim_{s\rightarrow t, s\not \in I_{0}} u(s)$.
Since $u$ is Lebesgue-measurable, by Lusin's theorem, the Lebesgue measure of the complement, in $[t_0,T]$, for all finite $T$, of the set of Lebesgue points is equal to 0.

Note that if $u_{1}\in \UU$ and $u_{2}\in \UU$, and if $\tau\geq t_{0}$ is given, the concatenated input $v$, defined by $v(t)= \left\{ \begin{array}{ll} u_{1}(t)&\mbox{\textrm if~} t\in [t_{0}, \tau[\\u_{2}(t)&\mbox{\textrm if~} t \geq \tau\end{array}\right.$ satisfies $v\in \UU$. The concatenation operator relative to $\tau$ is denoted by $\Join_{\tau}$, i.e. $v=u_{1}\Join_{\tau} u_{2}$. 

To easily handle the multidimensional constraints \eqref{eq:state_const}, let us introduce the following notations. The \emph{constraint set} is defined as:
\begin{equation}\label{Gdef}
  G \triangleq \{x \in \mathbb{R}^n:  g_{i}(x)\leq 0, i=1,\ldots,p \}.
\end{equation}
\begin{itemize}
\item Assuming that 
$G$, endowed with its relative topology in $\RR^{n}$,  has nonempty interior, denoting $g(x)= (g_{1}(x), \ldots, g_{p}(x))$, we denote by $g(x)\circeq 0$ the fact that $x\in \RR^{n}$ satisfies $g_{i}(x)=0$ for at least one $i\in \{ 1, \ldots, p \}$ and $g_{i}(x)\leq 0$ for all $i\in \{ 1, \ldots, p \}$. The set of all indices $i\in \{ 1, \ldots, p \}$ such that $g_{i}(x)=0$ is denoted by $\II(x)$. 
\item By $g(x) \prec 0$ (resp. $g(x) \preceq 0$) we mean that $g_{i}(x) < 0$ (resp. $g_{i}(x) \leq 0$) for all $i\in \{ 1, \ldots, p \}$.
\end{itemize}

We also define the sets 
\begin{equation}\label{G0def}
  G_0 \triangleq \{x \in \mathbb{R}^n:  g(x)\circeq 0 \}, \qquad
  G_- \triangleq \{x \in \mathbb{R}^n: g(x) \prec 0\}.
\end{equation}
Clearly, the constraint set is given by $G = G_0 \cup G_-$. We call the points of $G_0$ boundary points.

\bigskip

We further assume:
\begin{description}
\item[(A1)] $f$ is an at least $C^{2}$ vector field of $\RR^{n}$ for every $u$ in an open subset of $\Rset^{m}$ containing $U_{1}$, whose dependence with respect to $u$ is also at least $C^{2}$.
\item[(A2)] There exists a constant $0 < C < +\infty$ such that one of the following inequalities holds true:
\begin{itemize}
\item[(i)] $\sup_{u\in U_{1}}\Vert f(x,u) \Vert \leq C(1+ \Vert x \Vert )$ for all $x$;
\item[(ii)] $\sup_{u\in U_{1}}\vert x^{T}f(x,u) \vert \leq C(1+ \Vert x \Vert^{2} )$ for all $x$.
\end{itemize}
\item[(A3)] The set $f(x,U_1)$, called the \emph{vectogram} in \cite{Isaacs}, is convex for all $x\in \RR^{n}$.
\item[(A4)] $g$ is an at least $C^{2}$ function from $\RR^{n}$ to $\RR^{p}$; and for each $i=1,\ldots, p$, the set of points given by $g_{i}(x)=0$ defines an $n-1$ dimensional manifold (also called, loosely speaking, a \emph{face} of the set $G$).
\end{description}

\bigskip

In (\ref{eq:state_const}),
$x(t)$---sometimes denoted
$x^u(t)$, or $x^{(u,x_0)}(t)$, or even $x^{(u,x_0,t_0)}(t)$, when the distinction is required---denotes the solution of the differential equation~\eqref{eq:state_space} with input  $u\in \UU$  and initial condition~\eqref{eq:initial_condition}.

Going back to the concatenation operator, it is readily verified that the integral curve $x^{(v,x_0,t_0)}$ generated by $v = u_{1}\Join_{\tau} u_{2}$ from $x_{0}$ at initial time $t_{0}$ satisfies 
$$x^{(v,x_0,t_0)} = x^{(u_{1},x_{0},t_{0})}\Join_{\tau} x^{(u_{2},x^{(u_{1},x_{0},t_{0})}(\tau), \tau)}$$ 
i.e. coincides, on $[t_{0},\tau]$, with the arc of integral curve generated by $u_{1}$ on this interval, and, on $[\tau,\infty)$, with the integral curve generated by $u_{2}$ starting at time $\tau$ from the end point $x^{(u_{1},x_{0},t_{0})}(\tau)$ of the previous one.

Several results, concerning compactness of the solutions of
system~\eqref{eq:state_space} and some of their variational properties, that are key in various proofs of the remaining sections, and which are available in the literature in a rather scattered way and embedded in slightly different contexts (see e.g. \cite{Cesari,Lee_Markus,Trelat}), are presented for completeness in the Appendices~\ref{Append-A} and \ref{Append-B} at the end of the paper.

Without loss of generality, since system (\ref{eq:state_space}) is time-invariant, $t_0$ may be replaced by 0. For simplicity's sake, we adopt this choice in the sequel and, when clear from the context, ``$\forall t$" or ``for almost all $t$'' will mean
``$\forall t \in [0, \infty)$" or ``for almost all $t\in [0, \infty)$''.

\section{The admissible set. Topological properties}\label{sec:AdmSetTopol} 

In this section we define the admissible set for the constrained nonlinear system~\eqref{eq:state_space}--\eqref{eq:state_const} and study its topological properties.

\begin{defn}[Admissible States]
\label{def:admiss_states}
We will say that a state-space point $\bar{x}$ is \emph{admissible} if there exists, at least, one input function $v\in \UU$, such
that~\eqref{eq:state_space}--\eqref{eq:state_const} are
satisfied for $x_0=\bar{x}$ and $u=v$. Note that the Markovian property of the system implies that any point of the integral curve,
$x^{(v,\bar{x})}(t_1)$, $t_1 \in [0, \infty)$, is also an admissible point. The set of admissible states is expressed mathematically as:
\begin{equation}\label{eq:Admiss_states}
 \AAA \triangleq \{\bar{x} \in G: \exists u\in \UU,~ g\big(x^{(u,\bar{x})}(t)\big) \preceq  0, \forall t\}.
\end{equation}
\end{defn}

Its complement in $G$, namely $ \AC \triangleq G\setminus\AAA$, is thus given by:
\begin{equation}\label{eq:Compl_Admiss_states}
 \AC \triangleq \{\bar{x} \in G: \forall u\in \UU,~\exists \bar{t} < +\infty, \exists i\in \{1,\ldots,p\}
 ~ \mathrm{s.t.} ~ g_{i}\big(x^{(u,\bar{x})}(\bar{t})\big) >  0\}.
\end{equation}
From now on, all set topologies will be defined relative to $G$.

We discard the trivial cases  $\AAA = \emptyset$ and $\AC = \emptyset$. Therefore, in the sequel, we assume that both  $\AAA$ and $\AC$ contain at least one element.

In addition to the infinite horizon admissible set $\AAA$, we also consider the family of sets $\AAA_{T}$, called finite horizon admissible sets, defined for all finite $0\leq T<+\infty$ by
$$\AAA_T  \triangleq \{\bar{x} \in G: \exists u\in \UU,~ g\big(x^{(u,\bar{x})}(t)\big) \preceq  0, \forall t \leq T\}.$$

Accordingly, its complement $\AC_{T}$ in $G$ is given by:
$$\AC_{T} \triangleq \{\bar{x} \in G: \forall u\in \UU,~\exists \bar{t} \leq T, \exists i\in \{1,\ldots,p\}
 ~ \mathrm{s.t.} ~ g_{i}\big(x^{(u,\bar{x})}(\bar{t})\big) >  0\}.$$

\begin{rem}
We stress that the study of the admissible set $\AAA$ in Definition~\ref{def:admiss_states} cannot be reduced to the one of attainable sets (also called reachable sets). 
Recall (see e.g. \cite{Isidori, Lee_Markus, NvdS, Son}) that the attainable, or reachable, set at time $t$ from $\bar{x}$, denoted by $X_{t}(\bar{x})$, is the subset of $\RR^{n}$ defined by
\begin{equation}\label{attain-set}
X_{t}(\bar{x})\triangleq \{ x^{(u,\bar{x})}(t): u\in \UU \}.
\end{equation}
Indeed, one can prove that\footnote{Though nowhere used in this paper, identities (\ref{adm-att}) are provided for the sake of completeness. Their proof, based on the existence of an absolutely continuous section $\{ (t,x(t)) : x(t) \in X_{t}(\bar{x}) \cap G \;\; \forall t \}$, is left to the reader.}
\begin{equation}\label{adm-att}
\AAA_T = \{ \bar{x} \in \RR^{n}: X_{t}(\bar{x}) \cap G \neq \emptyset , \; \forall t\in [0,T] \}, \;\; \AAA = \{ \bar{x} \in \RR^{n}: X_{t}(\bar{x}) \cap G \neq \emptyset, \;  \forall t \geq 0 \}.
\end{equation}

The set $\AAA$ can also be related to the notion of \emph{maximal controlled positively invariant set}, see e.g.~\cite{Blan-book}
and the references therein. We point out that, as opposed to the latter references, our definition is concerned with general measurable controls in the classical context of ordinary differential equations, and does not depend on the existence of a synthesizable feedback law.
\end{rem}

\begin{pr}
\label{closedness-prop}
Assume that (A1)--(A4) are valid. The set of finite horizon admissible states, $\AAA_{T}$, is closed for all finite $T$.
\end{pr}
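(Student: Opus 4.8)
The plan is to show that the complement $\ATC$ is open in $G$, or equivalently, to show directly that $\AAA_T$ contains all its limit points: take a sequence $(\bar x_k)_{k\in\NN}$ in $\AAA_T$ converging to some $\bar x\in G$, and produce an admissible control for $\bar x$. For each $k$, admissibility of $\bar x_k$ gives a control $u_k\in\UU$ with $g\big(x^{(u_k,\bar x_k)}(t)\big)\preceq 0$ for all $t\le T$. The heart of the argument is a compactness step: since all the initial conditions $\bar x_k$ lie in a bounded neighbourhood of $\bar x$, assumption (A2) yields a uniform a priori bound on the trajectories $x^{(u_k,\bar x_k)}$ on $[0,T]$, and together with (A1), (A3) (convexity of the vectogram $f(x,U_1)$, which is what makes the set of trajectories closed rather than merely its closure being relevant) one invokes the compactness result on solutions stated in Appendix~\ref{Append-A}. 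This gives a subsequence, still denoted $x^{(u_k,\bar x_k)}$, converging uniformly on $[0,T]$ to some trajectory $x^\ast$ of the system which is of the form $x^{(u^\ast,\bar x)}$ for some $u^\ast\in\UU$ (the limiting control is recovered from the closure-of-trajectories theorem; here convexity of $f(x,U_1)$ is essential).

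The second step is to pass the constraint inequality to the limit. For each fixed $t\le T$ and each $i\in\{1,\dots,p\}$ we have $g_i\big(x^{(u_k,\bar x_k)}(t)\big)\le 0$; by continuity of $g_i$ (from (A4)) and uniform convergence $x^{(u_k,\bar x_k)}(t)\to x^\ast(t)$, we get $g_i\big(x^\ast(t)\big)\le 0$. Since this holds for every $t\le T$ and every $i$, we conclude $g\big(x^{(u^\ast,\bar x)}(t)\big)\preceq 0$ for all $t\le T$, and also $\bar x = x^\ast(0)\in G$, so $\bar x\in\AAA_T$. This closes the argument.

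The main obstacle is the compactness/closure step — ensuring the limiting object is genuinely a trajectory of \eqref{eq:state_space} for an \emph{admissible} input $u^\ast\in\UU$, and not merely a uniform limit of trajectories. This is exactly where convexity of the vectogram (A3) and the growth condition (A2) are needed: (A2) prevents finite-time blow-up and gives equiboundedness (hence, via (A1), equi-Lipschitzness) of the family $\{x^{(u_k,\bar x_k)}\}$ on $[0,T]$, so Arzelà--Ascoli applies; (A3) guarantees that the uniform limit of solutions of the differential inclusion $\dot x\in f(x,U_1)$ is again a solution, and a measurable selection theorem then provides $u^\ast$. I would simply cite the precise statement from Appendix~\ref{Append-A} for this, rather than reproving it. A minor point to handle carefully is that topologies are taken relative to $G$, so one only needs $\bar x\in G$ as the ambient limit and the conclusion $\bar x\in\AAA_T\subset G$ is consistent; no issue arises at $\partial G$ because the constraint $g\preceq 0$ is closed.
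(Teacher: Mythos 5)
Your proposal is correct and follows essentially the same route as the paper's proof: take a converging sequence of initial states in $\AAA_T$ with their admissible controls, invoke the compactness result of Lemma~\ref{compact-lem} in Appendix~\ref{Append-A} (where (A2) gives equiboundedness/equicontinuity and (A3) ensures the uniform limit is a genuine trajectory $x^{(\bar u,\bar x)}$ with $\bar u\in\UU$), and pass the constraint inequalities to the limit by continuity of $g$. Your additional remarks on the roles of (A2), (A3) and the measurable selection are consistent with how the appendix lemma is proved, so nothing is missing.
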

\begin{proof}
The result is an immediate consequence of the compactness result proven in Lemma~\ref{compact-lem} of  Appendix~\ref{Append-A}. We sketch it for the sake of completeness.

Consider a sequence of initial states $\{ x_{k} \}_{k\in \NN}$ in $\AAA_T$ converging to $\bar{x}$ as $k$ tends to infinity. By definition of $\AAA_T$, for every $k\in \NN$, there exists $u_{k}\in \UU$ such that the corresponding integral curve $x^{(u_{k},x_{k})}$ satisfies $g(x^{(u_{k},x_{k})}(t)) \preceq 0$ for all $t\in [0,T]$. According to Lemma~\ref{compact-lem}, there exists a uniformly converging subsequence, still denoted by $x^{(u_{k},x_{k})}$, to the absolutely continuous integral curve $x^{(\bar{u},\bar{x})}$ for some $\bar{u}\in \UU$. By the continuity of $g$, we immediately get that
$g(x^{(\bar{u},\bar{x}))}(t)) \preceq 0$ for all $t\in [0,T]$, hence $\bar{x}\in \AAA_T$, and the
proposition is proven.
\end{proof}

\begin{cor}\label{close-cor}
Under the assumptions of Proposition~\ref{closedness-prop}, the set $\AAA$ is closed.
\end{cor}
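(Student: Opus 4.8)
The plan is to derive the closedness of $\AAA$ from the already-established closedness of each finite-horizon set $\AAA_{T}$ (Proposition~\ref{closedness-prop}), by expressing $\AAA$ as an intersection of the $\AAA_{T}$ and invoking the fact that an arbitrary intersection of closed sets is closed. The first observation is the set identity
\begin{equation}\label{intersec-id}
\AAA = \bigcap_{T\geq 0} \AAA_{T}.
\end{equation}
Indeed, if $\bar{x}\in \AAA$ then there is a single $u\in\UU$ with $g(x^{(u,\bar{x})}(t))\preceq 0$ for all $t\geq 0$, and this same $u$ witnesses $\bar{x}\in \AAA_{T}$ for every $T$; conversely, containment of $\AAA$ in each $\AAA_{T}$ already gives $\AAA\subseteq\bigcap_{T\geq 0}\AAA_{T}$. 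It therefore suffices to also note that $\AAA_{T_{1}}\subseteq\AAA_{T_{2}}$ whenever $T_{1}\geq T_{2}$, so that the intersection in \eqref{intersec-id} can be taken along the integers, $\AAA = \bigcap_{N\in\NN}\AAA_{N}$, which is cosmetically cleaner but not logically necessary.

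The genuine content is the reverse inclusion $\bigcap_{T\geq 0}\AAA_{T}\subseteq\AAA$, i.e. passing from ``for each finite $T$ there exists an admissible control on $[0,T]$'' to ``there exists a single admissible control on $[0,\infty)$''. This is exactly the point where a naive argument fails, because the controls $u_{T}$ produced for different horizons need not be compatible. The remedy is a diagonal extraction of the type already used in Lemma~\ref{compact-lem}: for each $N\in\NN$ pick $u_{N}\in\UU$ with $g(x^{(u_{N},\bar{x})}(t))\preceq 0$ on $[0,N]$; by the compactness lemma applied on $[0,1]$ there is a subsequence whose integral curves converge uniformly on $[0,1]$ to $x^{(\bar u^{(1)},\bar x)}$ for some $\bar u^{(1)}\in\UU$; refine this subsequence on $[0,2]$, then on $[0,3]$, and so on; the diagonal subsequence then has integral curves converging uniformly on every compact $[0,N]$ to a single absolutely continuous curve $x^{(\bar u,\bar x)}$, $\bar u\in\UU$. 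Continuity of $g$ gives $g(x^{(\bar u,\bar x)}(t))\preceq 0$ for all $t\geq 0$, hence $\bar x\in\AAA$.

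I expect the main (and only) obstacle to be this diagonal passage to the infinite horizon, and in particular checking that the limit control $\bar u$ genuinely lies in $\UU$ and generates the limit curve on all of $[0,\infty)$ — but this is precisely the situation handled by Lemma~\ref{compact-lem} in Appendix~\ref{Append-A} under assumptions (A1)--(A4) (the sublinear growth or inward-pointing estimate of (A2) preventing finite-time blow-up, and the convexity (A3) of the vectogram ensuring the limit curve is again a trajectory). Once that lemma is invoked, the corollary reduces to the two-line remark that $\AAA$ is an intersection of closed sets. An even shorter route, worth mentioning, simply bypasses \eqref{intersec-id}: run the argument of Proposition~\ref{closedness-prop} verbatim on a sequence $x_{k}\to\bar x$ in $\AAA$ with associated infinite-horizon controls $u_{k}$, extract via Lemma~\ref{compact-lem} a subsequence whose curves converge uniformly on compacta to $x^{(\bar u,\bar x)}$, and conclude by continuity of $g$ that $g(x^{(\bar u,\bar x)}(t))\preceq 0$ for all $t\geq 0$.
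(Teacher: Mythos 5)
Your proposal is correct and follows essentially the same route as the paper: write $\AAA=\bigcap_{T\geq 0}\AAA_{T}$ and conclude from Proposition~\ref{closedness-prop} together with the fact that an intersection of closed sets is closed. The only difference is that you explicitly justify the nontrivial inclusion $\bigcap_{T\geq 0}\AAA_{T}\subseteq\AAA$ by a diagonal extraction through Lemma~\ref{compact-lem} (or, alternatively, by rerunning the argument of Proposition~\ref{closedness-prop} directly with infinite-horizon controls), a step the paper's proof treats as immediate; your justification is sound and indeed the place where the compactness assumptions (A1)--(A3) enter.
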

\begin{proof}
For all $0\leq T_{1} \leq T_{2} < \infty$, we have:
\begin{equation}\label{incl}
\AAA=\AAA_{\infty}\subset \AAA_{T_{2}}\subset \AAA_{T_{1}}\subset \AAA_0 = G.
\end{equation}
Therefore, $\AAA=\bigcap_{T\geq 0} \AAA_T$ and the result is an
immediate consequence of the fact that the intersection of a family of closed sets is closed.
\end{proof}
\begin{rem}
Condition (A2) on $f$ is introduced here in order to guarantee boundedness and uniform convergence of a sequence of integral curves, which are required in the proof of the above proposition (see the details in  Appendix~\ref{Append-A}). However, as the examples shown later will
illustrate, this (sufficient) condition is generally not needed since many systems that
do not satisfy it will still have bounded trajectories under admissible controls. Any other condition on $f$ ensuring uniform boundedness (see e.g. \cite{Filippov_siam}) will give similar compactness results.
\end{rem} 

\section{Boundary of the admissible set}\label{sec:BoundAdmSet}

We conclude from Proposition~\ref{closedness-prop} and Corollary~\ref{close-cor} that the sets $\AAA_{T}$, for all $T\geq 0$, and $\AAA$ contain their respective boundaries, denoted by $\DA_{T}$ and $\DA$ respectively.
From now  on, we will focus on the properties and
characterisation of these boundaries, which are arguably the most important objects to describe the effect of the constraints~(\ref{eq:input_constraint})--(\ref{eq:state_const}) on the time evolution of the dynamical system~(\ref{eq:state_space}).

To start with, let us adapt the definition of semipermeability (a notion introduced several decades ago in differential games by R.\ Isaacs \cite{Isaacs}) to our context. 
\begin{defn}[Semipermeability]\label{semiperm-def}
The boundary $\partial A$ (relative to $B$) of a closed set $A\subset B \subset \RR^{n}$ is called semipermeable relative to $B$ and system (\ref{eq:state_space}), or just semipermeable if non-ambiguous, if, and only if, no integral curve of the system starting in $A^{\mathsf C}\cap B$ can penetrate $A$ before leaving $B$, i.e. if, and only if, for all $x_{0} \in A^{\mathsf C}\cap B$, for all $u\in \UU$ and all $t$ such that the connected arc of integral curve $\wideparen{x_{0}, x^{(u,x_{0})}(t)}\subset B$ we have that $x^{(u,x_{0})}(t) \in A^{\mathsf C}$.
\end{defn}

\begin{pr}\label{semiperm:prop}
For all $T\geq 0$, the sets $\DA_{T} \cap G_{-}$ and $\DA \cap G_{-}$ are semipermeable relative to $G$.
\end{pr}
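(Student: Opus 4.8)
The plan is to argue by contradiction directly from Definition~\ref{semiperm-def}, using the closedness of $\AAA$ (Corollary~\ref{close-cor}) and $\AAA_T$ (Proposition~\ref{closedness-prop}), together with the characterisations \eqref{eq:Compl_Admiss_states} of $\AC$ and $\ATC$. I treat the infinite horizon case $\DA\cap G_-$ in detail; the finite horizon case is entirely analogous with $\bar t\le T$ inserted and the same compactness arguments. So suppose $\DA\cap G_-$ is \emph{not} semipermeable relative to $G$. Then there exist $x_0\in\AC\cap G$, a control $u\in\UU$, and a time $t_1>0$ such that the connected arc $\wideparen{x_0,x^{(u,x_0)}(t_1)}$ stays in $G$ and yet $x^{(u,x_0)}(t_1)\in\AAA$. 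Note first that this arc cannot lie in $G_-$ only trivially: since $x_0\in\AC$, the point $x^{(u,x_0)}(t)$ stays in $\AC$ as long as the arc is in $\AC$, and the issue is whether the arc crosses $\DA$ while remaining in $G$.

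The key step is to promote the crossing into a contradiction with admissibility. Let
$$t^{*}\triangleq\sup\{t\in[0,t_1]:x^{(u,x_0)}(s)\in\AC\text{ for all }s\in[0,t]\}.$$
Since $x_0\in\AC$ and $\AAA$ is closed (so $\AC$ is open \emph{relative to} $G$), and since by hypothesis $x^{(u,x_0)}(t_1)\in\AAA$, we have $0<t^{*}\le t_1$ and $x^{*}\triangleq x^{(u,x_0)}(t^{*})\in\DA$ (it is a limit of points of $\AC$, hence in $\cl(\AC)$, and it is also the first point of $\wideparen{x_0,x^{(u,x_0)}(t_1)}$ lying in $\AAA$, hence in $\partial\AAA$ relative to $G$). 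Crucially, the whole sub-arc $\wideparen{x_0,x^{*}}$ lies in $G$ by assumption, and in fact in $\cl(\AC)\cap G$. Now because $x^{*}\in\AAA$, by Definition~\ref{def:admiss_states} there is $u_2\in\UU$ with $g\big(x^{(u_2,x^{*})}(t)\big)\preceq0$ for all $t\ge0$. Consider the concatenated control $v\triangleq u\Join_{t^{*}}u_2$: by the concatenation identity recalled after (A1)--(A4), the integral curve $x^{(v,x_0)}$ coincides with $\wideparen{x_0,x^{*}}$ on $[0,t^{*}]$ and with $x^{(u_2,x^{*})}$ afterwards. The first piece satisfies $g\big(x^{(v,x_0)}(t)\big)\preceq0$ for $t\le t^{*}$ precisely because $\wideparen{x_0,x^{*}}\subset G$; the second piece satisfies it for $t\ge t^{*}$ by choice of $u_2$. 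Hence $g\big(x^{(v,x_0)}(t)\big)\preceq0$ for all $t\ge0$, so $x_0\in\AAA$, contradicting $x_0\in\AC$. This proves semipermeability of $\DA\cap G_-$ relative to $G$.

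The one point that requires care — and which I expect to be the main obstacle — is the assertion that $x^{*}$ genuinely lies on $\DA$ and not merely that the arc ``jumps'' across, and in particular that the portion $\wideparen{x_0,x^{*}}$ remains inside $G$ up to and including $t^{*}$. The containment in $G$ is inherited from the hypothesis $\wideparen{x_0,x^{(u,x_0)}(t_1)}\subset G$ by continuity and closedness of $G$. That $x^{*}\in\DA$ follows from two facts: $x^{*}\in\cl(\AC)$ because it is a limit of points $x^{(u,x_0)}(s)$, $s\uparrow t^{*}$, all in $\AC$; and $x^{*}\in\AAA$ because $\AAA$ is closed and $x^{*}$ is the infimum over times at which the arc (which ends in $\AAA$) has re-entered $\AAA$ — one checks by a short continuity argument that for $t$ slightly beyond $t^{*}$ the curve is in $\AAA$, or else $t^{*}$ was not the supremum above; hence $x^{*}\in\AAA\cap\cl(\AC)=\partial\AAA$ (relative to $G$), i.e. $x^{*}\in\DA$. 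The restriction to $G_-$ in the statement is what allows us to speak of the \emph{relative} topology cleanly: on $\DA\cap G_-$ the ambient constraint $g\preceq0$ is strictly satisfied, so there is an open (in $\RR^n$) neighbourhood contained in $G$, and the relative and absolute boundary notions coincide there; this is exactly where the argument would need modification if $x^{*}\in G_0$, which is precisely why the proposition is stated only on $G_-$. Finally, for $\DA_T\cap G_-$ one repeats the argument verbatim, replacing ``$\forall t\ge0$'' by ``$\forall t\le T$'' and invoking the closedness of $\AAA_T$ from Proposition~\ref{closedness-prop} and the corresponding characterisation $\ATC$; no new difficulty arises.
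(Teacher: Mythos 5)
Your proposal is correct and takes essentially the same route as the paper: a contradiction argument in which the control steering a trajectory from a point of $\AC$ into $\AAA$ while the arc stays in $G$ is concatenated (via $\Join$) with an admissible control issued from the penetration point, which forces the starting point to lie in $\AAA$ and contradicts its inadmissibility (the finite-horizon case being identical with $[0,T]$ in place of $[0,\infty)$). Your extra detour through the first crossing time $t^{*}$ and the claim $x^{*}\in\DA$ is harmless but not needed: concatenating at any time at which the arc lies in $\AAA$, as the paper does, already yields the contradiction.
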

\begin{proof}
We directly prove this result for $T=+\infty$. The proof for finite $T$ follows exactly the same lines.

Assume that $\bar{x} \in \AC \cap G_{-}$. For every $u\in \UU$, we denote by $\bar{t}(u)$ the first exit time from $G$, i.e.
$$\bar{t}(u) = \inf \{ t\in [0,\infty ) : \; \exists i\in \{1,\ldots,p\} ~s.t.~ g_{i}(x^{(u,\bar{x})}(t)) > 0\}.$$ 
Such a lower bound exists for every $u\in \UU$ by definition of $\AC$. If $\bar{t}(u)=0$ for every $u\in \UU$, then, clearly, $\bar{x} \not\in G_{-}$. Therefore, there exists $\tilde{u}\in \UU$ such that $\bar{t}(\tilde{u}) > 0$.
Let us assume by contradiction that for such $\tilde{u}$ there exists  $\tilde{t} \in \ ]0, \bar{t}(\tilde{u})[$ such that $\tilde{\xi}\triangleq x^{(\tilde{u},\bar{x})}(\tilde{t}) \in \AAA$ and we set $\tilde{\tilde{u}} =  \tilde{u} \Join_{\tilde{t}} \bar{u}$ where $\bar{u}$ is such that $g(x^{(\bar{u},\tilde{\xi})}(t)) \preceq 0$ for all $t \geq \tilde{t}$, which exists by definition of $\AAA$.
The corresponding integral curve $x^{(\tilde{\tilde{u}}, \bar{x})}$ is therefore equal to $x^{(\tilde{u},\bar{x})}$ for $t \leq \tilde{t}$ and to $x^{(\bar{u},\tilde{\xi})}$ for $t \geq \tilde{t}$. Moreover, since $g(x^{(\tilde{u},\bar{x})}(t)) \preceq 0$ for $t \leq \tilde{t}$ and $g(x^{(\bar{u},\tilde{\xi})}(t)) \preceq 0$ for $t \geq \tilde{t}$, we conclude that
$g(x^{(\tilde{\tilde{u}}, \bar{x})}(t)) \preceq 0$ for all $t \in [0,\infty)$. Therefore, we have proven that $\bar{x} \in \AAA$, which contradicts our assumption that $\bar{x} \in \AC \cap G_{-}$. It results that no such $\tilde{t}$ can exist, and therefore that no trajectory starting in $\AC \cap G_{-}$ can penetrate $\AAA$ before leaving $G$, thus proving that $\DA \cap G_{-}$ is semipermeable relative to $G$. 
\end{proof}

We now prove the following result.

\begin{pr} \label{min-sup-prop}
Assume that (A1)--(A4) hold.  We have the following equivalences:
\begin{itemize}
\item[(i)] $\bar{x} \in \AAA$ is equivalent to
\begin{equation}\label{AAAopt}
\min_{u\in \UU} \sup_{t\in [0,\infty)} \max_{i=1,\ldots,p} g_{i}(x^{(u,\bar{x})}(t)) \leq 0
\end{equation}
\item[(ii)]Ê$\bar{x} \in \AC$ is equivalent to
\begin{equation}\label{ACopt}
\min_{u\in \UU} \sup_{t\in [0,\infty)} \max_{i=1,\ldots,p} g_{i}(x^{(u,\bar{x})}(t)) >  0
\end{equation}
\item[(iii)] $\bar{x} \in \DA$ is equivalent to
\begin{equation}\label{DAopt}
\min_{u\in \UU} \sup_{t\in [0,\infty)} \max_{i=1,\ldots,p} g_{i}(x^{(u,\bar{x})}(t)) = 0.
\end{equation}
\end{itemize}
\end{pr}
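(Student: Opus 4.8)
The plan is to recast all three claims in terms of the single quantity
\[
V(\bar x)\ \triangleq\ \inf_{u\in\UU}\ \sup_{t\in[0,\infty)}\ \max_{i=1,\ldots,p}g_i\big(x^{(u,\bar x)}(t)\big),
\]
the essential preliminary being that this infimum is \emph{attained}, so that writing ``$\min$'' is legitimate. To prove attainment, fix $\bar x$ and pick a minimizing sequence $(u_k)\subset\UU$. By (A2) and a Gronwall estimate the curves $x^{(u_k,\bar x)}$ are uniformly bounded, hence equicontinuous, on every compact time interval, so Lemma~\ref{compact-lem} of Appendix~\ref{Append-A} (which uses (A1), (A3), (A4)) yields a subsequence converging uniformly on compacts to an admissible curve $x^{(\bar u,\bar x)}$, $\bar u\in\UU$. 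For each fixed $t$, continuity of $g$ gives $\max_i g_i\big(x^{(u_k,\bar x)}(t)\big)\to\max_i g_i\big(x^{(\bar u,\bar x)}(t)\big)$, while $\max_i g_i\big(x^{(u_k,\bar x)}(t)\big)\le\sup_s\max_i g_i\big(x^{(u_k,\bar x)}(s)\big)\to V(\bar x)$; letting $k\to\infty$ and then taking the supremum over $t$ gives $\sup_t\max_i g_i\big(x^{(\bar u,\bar x)}(t)\big)\le V(\bar x)$, hence equality, so $\bar u$ is a minimizer. The same argument with the finite-horizon supremum shows each $V_T(\bar x)=\min_u\sup_{t\le T}\max_i g_i\big(x^{(u,\bar x)}(t)\big)$ is attained and, being a supremum over a compact interval, continuous in $\bar x$; and $V=\sup_{T}V_T$ is the increasing limit of these, hence lower semicontinuous but not continuous in general.

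Granting attainment, (i) is immediate: by Definition~\ref{def:admiss_states}, $\bar x\in\AAA$ iff some $u\in\UU$ keeps $g\big(x^{(u,\bar x)}(t)\big)\preceq0$ for all $t$, i.e. $\sup_t\max_i g_i\big(x^{(u,\bar x)}(t)\big)\le0$ for some $u$, which (the minimum being attained) is exactly $V(\bar x)\le0$, i.e. \eqref{AAAopt}. Statement (ii) is its contrapositive inside $G$: for $\bar x\in G$, $\bar x\in\AC=G\setminus\AAA$ iff $\bar x\notin\AAA$ iff $V(\bar x)>0$, i.e. \eqref{ACopt}. (For $\bar x\notin G$ one has $g_i(\bar x)>0$ for some $i$, so the $t=0$ term already makes \eqref{ACopt} hold although $\bar x\notin\AC$; thus (ii) is to be read for $\bar x\in G$.)

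For (iii), since $\AAA$ is closed (Corollary~\ref{close-cor}), $\DA=\partial\AAA\subseteq\AAA$, so by (i) any $\bar x\in\DA$ has $V(\bar x)\le0$, and conversely $V(\bar x)=0$ gives $\bar x\in\AAA$. It therefore suffices to show $\{V<0\}=\Int\AAA$, for then $\{V=0\}=\AAA\setminus\Int\AAA=\DA$, which is \eqref{DAopt}. One half is easy: if $\bar x\in G_0$ then $\max_i g_i(\bar x)=0$, so the $t=0$ term forces $V(\bar x)\ge0$; hence $\{V<0\}\subseteq G_-$ and any point of $G_0\cap\DA$ automatically has $V=0$. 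For $\bar x\in G_-\cap\DA$, use $\bar x\in\cl\AC$ (valid since $\AAA$ is closed) to pick $x_k\to\bar x$ with $x_k\in\AC\cap G_-$, and run the optimal control $u^\ast$ of $\bar x$ from each $x_k$: by definition of $\AC$ the curve $x^{(u^\ast,x_k)}$ first reaches $G_0$ at a finite time $\tau_k$, at a point $z_k$ with $\max_i g_i(z_k)=0$. If $(\tau_k)$ has a bounded subsequence, then $\tau_k\to\tau^\ast<\infty$ along it and, by continuous dependence on compact intervals (Lemma~\ref{compact-lem}), $z_k\to x^{(u^\ast,\bar x)}(\tau^\ast)$, whence $\max_i g_i\big(x^{(u^\ast,\bar x)}(\tau^\ast)\big)=0$ and $V(\bar x)=\sup_t\max_i g_i\big(x^{(u^\ast,\bar x)}(t)\big)\ge0$; with $V(\bar x)\le0$ this gives $V(\bar x)=0$.

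The \emph{main obstacle} is exactly the alternative $\tau_k\to+\infty$: it reflects the fact that the infinite-horizon supremum defining $V$ is only lower semicontinuous, so (iii) does not follow formally from (i), (ii) and a continuity statement. Excluding it is equivalent to proving that a \emph{strictly} feasible control for $\bar x$ — one with $\max_i g_i\big(x^{(u^\ast,\bar x)}(t)\big)\le-\delta<0$ for all $t$ — forces a whole neighborhood of $\bar x$ into $\AAA$, i.e. that $\{V<0\}$ is open. I would attack this by contradiction, feeding the escaping curves $x^{(u^\ast,x_k)}$ into Lemma~\ref{compact-lem}; but since the escape happens at $\tau_k\to+\infty$ it is invisible to the uniform-on-compacts limit (which is just the $\delta$-interior trajectory $x^{(u^\ast,\bar x)}$), so one needs a genuinely infinite-horizon device — for instance a uniform-in-time estimate for the deviation $x^{(u^\ast,x_k)}-x^{(u^\ast,\bar x)}$ along the confined trajectory, or a time-rescaling of $x^{(u^\ast,x_k)}$ near $\tau_k$ together with (A2)-type bounds — to produce the contradiction. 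This is the delicate heart of the proposition; everything else reduces to the attainment step.
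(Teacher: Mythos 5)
Your attainment step and parts (i)--(ii) are essentially the paper's own proof: the paper also takes a minimizing sequence, invokes the compactness Lemma~\ref{compact-lem} (which rests on (A1)--(A3), not (A4)), and shows that the uniform-on-compacts limit curve attains the infimum; it does this through an explicit $\ee$-chase, whereas you pass to the pointwise limit at each fixed $t$ and then take suprema, which is the same argument in slightly cleaner form. The deductions of (i), and of (ii) as the complement inside $G$ with strictness forced by (i), coincide with the paper's.

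The divergence --- and the genuine gap --- is (iii). You reduce (iii) to the identity $\{V<0\}=\Int(\AAA)$ and you cannot prove the inclusion $\{V<0\}\subseteq\Int(\AAA)$, equivalently you cannot exclude $\tau_k\to+\infty$; as written, the proposal therefore does not prove (iii). (The other inclusion $\Int(\AAA)\subseteq\{V<0\}$ is also delicate: a point of $G_0$ may lie in the relative interior of $\AAA$, and there $V=0$, so your proposed identity is not the right reduction.) The paper does not supply the infinite-horizon device you are looking for: it simply writes $\DA=\AAA\cap\cl(\AC)$, using closedness of $\AAA$, and then passes the strict inequality of (ii) to a ``$\geq$'' on $\cl(\AC)$ --- i.e.\ it asserts exactly the limit passage you balked at, in one line and without justification. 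Your scruple is substantive: $V$ is only lower semicontinuous, and under (A1)--(A4) alone the implication $\bar x\in\DA\Rightarrow V(\bar x)\geq 0$ can actually fail. Take $\dot x_1=x_1$, $\dot x_2=u$, $\vert u\vert\leq 1$, $g(x)=x_1-1$: then $\AAA=\{x_1\leq 0\}$, $\DA=\{x_1=0\}$, and every point of $\DA$ has $V=-1$, while the escape times of the nearby inadmissible points with $x_1=\ee>0$ are $\ln(1/\ee)\to\infty$ --- precisely your $\tau_k\to+\infty$ scenario. So the missing step cannot be closed by compactness alone; it requires an additional hypothesis (e.g.\ uniformly bounded escape times near the barrier, compactness of $G$, or finite-time attainment of $G_0$ by boundary trajectories, which the paper only invokes later). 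In short: your (i)--(ii) match the paper; your (iii) is incomplete, but the step you are missing is the very one the paper's own proof glosses over.
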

\begin{proof}
We first prove (i). If $\bar{x} \in \AAA$, by definition, there exists $u\in \UU$ such that $g(x^{(u,\bar{x})}(t))\preceq 0$ for all $t\geq 0$, and thus such that $\sup_{t\in [0,\infty)}  \max_{i=1,\ldots,p} g_{i}(x^{(u,\bar{x})}(t))\leq 0$. We immediately get
\begin{equation}\label{infless}
\inf_{u\in \UU} \sup_{t\in [0,\infty)}  \max_{i=1,\ldots,p} g_{i}(x^{(u,\bar{x})}(t)) \leq 0.
\end{equation}
 Let us prove next that the infimum with respect to $u$ is achieved by some $\bar{u}\in \UU$ in order to get (\ref{AAAopt}). To this aim, let us consider a minimising sequence $u_{k} \in \UU$, $k\in \NN$, i.e. such that
\begin{equation}\label{minseq}
\lim_{k\rightarrow \infty}  \sup_{t\in [0,\infty)}  \max_{i=1,\ldots,p} g_{i}(x^{(u_{k},\bar{x})}(t)) = \inf_{u\in \UU} \sup_{t\in [0,\infty)}  \max_{i=1,\ldots,p} g_{i}(x^{(u,\bar{x})}(t)).
\end{equation}
According to Lemma~\ref{compact-lem} in  Appendix~\ref{Append-A}, with $x_{k}=\bar{x}$ for every $k\in \NN$, one can extract a uniformly convergent subsequence on every compact interval $[0,T]$ with $T\geq 0$, still denoted by $x^{(u_{k},\bar{x})}$, whose limit is $x^{(\bar{u},\bar{x})}$ for some $\bar{u}\in \UU$. The continuity of $g$ implies that the sequence $g(x^{(u_{k},\bar{x})})$ uniformly converges to the function $g(x^{(\bar{u},\bar{x})})$ on every compact interval $[0,T]$ with $T\geq 0$. Therefore, for every $T\geq 0$ and every $\ee >0$, there exists $k_{0}(T,\ee) \in \NN$ such that, for every $k\geq k_{0}(T,\ee)$ and every $i \in \{ 1, \ldots, p \}$, we have $g_{i}(x^{(\bar{u},\bar{x})}(t)) \leq g_{i}(x^{(u_{k},\bar{x})}(t)) +\ee$, $\forall t \in [0,T]$; which yields, taking the supremum w.r.t.\ $t\in [0,\infty)$ and $i \in \{ 1, \ldots, p \}$ on the right hand side 
\begin{equation*}
g_{i}(x^{(\bar{u},\bar{x})}(t)) \leq \sup_{t\in [0,\infty)} \max_{i=1,\ldots, p}g_{i}(x^{(u_{k},\bar{x})}(t)) + \ee 
\qquad \forall t \in [0,T]. 
\end{equation*}
On the other hand, by the definition of the limit in (\ref{minseq}), for every $\ee >0$ 
there exists $k_{1}(\ee) \in \NN$ such that for all $k\geq k_{1}(\ee)$, we have
$$\sup_{t\in [0,\infty)}  \max_{i=1,\ldots, p}g_{i}(x^{(u_{k},\bar{x})}(t)) \leq  \inf_{u\in \UU} \sup_{t\in [0,\infty)}  \max_{i=1,\ldots, p}g_{i}(x^{(u,\bar{x})}(t)) + \ee.$$ 
Thus, for all $k\geq \max(k_{0}(T,\ee),k_{1}(\ee))$, we get
$$
g_{i}(x^{(\bar{u},\bar{x})}(t)) \leq  \inf_{u\in \UU} \sup_{t\in [0,\infty)}  \max_{i=1,\ldots, p}g_{i}(x^{(u,\bar{x})}(t)) + 2\ee \quad \forall t\in [0,T], \quad \forall i=1,\ldots,p.
$$
However, since the latter inequality is valid for any
$T \geq 0$ and it does not depend on $k$ anymore, and since its right-hand side is independent of $i$,  $t$ and $T$, we have that the inequality holds if we take the supremum of the left-hand side with respect to $t\in [0,\infty)$ and $i \in \{ 1, \ldots, p \}$. Using, in addition, the definition of the infimum w.r.t.\ $u$, we thus obtain that
for every $\ee > 0$
$$
\begin{aligned}
\sup_{t\in [0,\infty)} \max_{i=1,\ldots,p} g_{i}(x^{(\bar{u},\bar{x})}(t)) 
&\leq  \inf_{u\in \UU} \sup_{t\in [0,\infty)}  \max_{i=1,\ldots, p}g_{i}(x^{(u,\bar{x})}(t)) + 2\ee 
\\&\leq  \sup_{t\in [0,\infty)}  \max_{i=1,\ldots, p}g_{i}(x^{(\bar{u},\bar{x})}(t))+ 2\ee,
\end{aligned}
$$
or, using also~(\ref{infless}), that
$$ \sup_{t\in [0,\infty)}  \max_{i=1,\ldots, p}g_{i}(x^{(\bar{u},\bar{x})}(t)) = \inf_{u\in \UU} \sup_{t\in [0,\infty)}  \max_{i=1,\ldots, p}g_{i}(x^{(u,\bar{x})}(t)) \leq 0,$$
which proves (\ref{AAAopt}).

Conversely, if (\ref{AAAopt}) holds, there exists an input $u\in \UU$ such that
$$\sup_{t\in [0,\infty)}  \max_{i=1,\ldots, p}g_{i}(x^{(u,\bar{x})}(t)) \leq 0,$$
which in turn implies that
$g(x^{(u,\bar{x})}(t))\preceq 0$ for all $t\geq 0$, or, in other words, $\bar{x} \in \AAA$, which achieves the proof of (i).

To prove (ii), we now assume that $\bar{x} \in \AC$ and prove (\ref{ACopt}). By definition of $\AC$, for all $u\in \UU$, we have $\sup_{t\in [0,\infty)}  \max_{i=1,\ldots, p}g_{i}(x^{(u,\bar{x})}(t)) > 0$ and thus
$$\inf_{u\in \UU}\sup_{t\in [0,\infty)}  \max_{i=1,\ldots, p}g_{i}(x^{(u,\bar{x})}(t)) \geq 0.$$
The same minimising sequence argument as in the proof of (i) shows that the minimum over $u\in \UU$ is achieved by some $\bar{u}\in \UU$ and that
$$\min_{u\in \UU}\sup_{t\in [0,\infty)}  \max_{i=1,\ldots, p}g_{i}(x^{(u,\bar{x})}(t)) \geq 0.$$
But the inequality has to be strict since, if $\ds \min_{u\in \UU}\sup_{t\in [0,\infty)}  \max_{i=1,\ldots, p}g_{i}(x^{(u,\bar{x})}(t)) = 0$, it would imply, according to (i), that $\bar{x}\in \AAA$ which contradicts the assumption. Therefore, we have proven  (\ref{ACopt}).

Conversely, if  (\ref{ACopt}) holds, it is immediately seen that $\bar{x}$ is such that \sloppy $\ds \sup_{t\in [0,\infty)}  \max_{i=1,\ldots, p}g_{i}(x^{(u,\bar{x})}(t)) > 0$ for all $u\in \UU$. Since the mapping $\ds t \mapsto   \max_{i=1,\ldots, p}g_{i}(x^{(u,\bar{x})}(t))$ is continuous, the inverse image $\ds \{ t \in [0, \infty) :  \max_{i=1,\ldots, p}g_{i}(x^{(u,\bar{x})}(t)) > 0\}$ is a nonempty open subset of  $[0, \infty)$. Thus, for all $u\in \UU$, there exists $\bar{t}(u) < +\infty$ such that $\ds \max_{i=1,\ldots, p}g_{i}(x^{(u,\bar{x})}(\bar{t}(u))) > 0$, and hence $\bar{x}\in \AC$, which proves (ii).

To prove (iii), since $\AAA$ is closed, $\bar{x} \in \DA$ is equivalent to $\bar{x}\in \AAA$ and $\bar{x}\in \cl(\AC)$, the closure of $\AC$, which, by (i) and (ii), is equivalent to (\ref{AAAopt}) and (\ref{ACopt}) (the latter with a ``$\geq$'' symbol as a consequence of $\bar{x}\in \cl(\AC)$), which in turn is equivalent to  
(\ref{DAopt}).
\end{proof}

\begin{rem}
The same formulas hold true for $\AAA_T$, $\AC_T$ and $\DA_T$ if one replaces the infinite time interval $[0,\infty)$ by $[0,T]$.
\end{rem}

We define the sets
\begin{equation}\label{da0-dam-eq}
\DAO= \DA \cap G_0, \quad  \DAM= \DA \cap G_-.
\end{equation}
They indeed satisfy $\DA = \DAO \cup \DAM$.

We denote by $L_{f}h(x,u)\triangleq Dh(x)f(x,u)$ the Lie derivative of a smooth function $h:\RR^{n} \rightarrow \RR$ along the vector field $f(\cdot,u)$ at the point $x$.

The boundary subset $\DAO$ is easily characterised.
\begin{pr}\label{pr:usable}
The boundary subset $\DAO$ is contained in the set of points $z\in G_{0}$ such that 
\begin{equation}\label{us-boundary:eq}
\min_{u\in U_{1}} \max_{i \in \II(z)} L_{f} g_{i}(z,u) \leq 0
\end{equation}
with the notations introduced in Section~\ref{sec:ConsDynCon}. 
\end{pr}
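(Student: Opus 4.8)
The plan is to use that a point $z\in\DAO$ is, in particular, an admissible state lying on an active face of $G$, so that along a trajectory keeping the system admissible \emph{each active constraint function attains its supremum over $[0,\infty)$ at the initial instant}. Concretely, since $\AAA$ is closed (Corollary~\ref{close-cor}), $\DAO=\DA\cap G_{0}\subset\AAA\cap G_{0}$, so there is $u\in\UU$ with $g\big(x^{(u,z)}(t)\big)\preceq 0$ for all $t\geq 0$, while $g_{i}(z)=0$ exactly for $i\in\II(z)$ (and $\II(z)\neq\emptyset$). For such an $i$ the map $t\mapsto g_{i}\big(x^{(u,z)}(t)\big)$ is absolutely continuous (composition of the absolutely continuous, locally Lipschitz arc $x^{(u,z)}$ with $g_{i}\in C^{1}$) and vanishes at $t=0$, hence
$$g_{i}\big(x^{(u,z)}(t)\big)=\int_{0}^{t}L_{f}g_{i}\big(x^{(u,z)}(s),u(s)\big)\,ds\ \leq\ 0\qquad\forall\,t>0,$$
and dividing by $t$ yields $\frac{1}{t}\int_{0}^{t}L_{f}g_{i}\big(x^{(u,z)}(s),u(s)\big)\,ds\leq 0$ for every $i\in\II(z)$.

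The next step is to freeze the base point. Set $w_{t}\triangleq\frac{1}{t}\int_{0}^{t}f\big(z,u(s)\big)\,ds$. By continuity of $x^{(u,z)}$ at $0$ one has $\rho(t)\triangleq\sup_{0\leq s\leq t}\Vert x^{(u,z)}(s)-z\Vert\to 0$ as $t\to 0^{+}$; and since $(x,u)\mapsto L_{f}g_{i}(x,u)=Dg_{i}(x)f(x,u)$ is continuous by (A1) and (A4), it is uniformly continuous on a compact neighbourhood of $z$ times $U_{1}$, hence admits, over the finitely many $i\in\II(z)$, a common modulus of continuity $\omega$ with $\omega(r)\to 0$ as $r\to 0^{+}$. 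Replacing $L_{f}g_{i}\big(x^{(u,z)}(s),u(s)\big)$ by $L_{f}g_{i}\big(z,u(s)\big)=Dg_{i}(z)f(z,u(s))$ in the integral above costs at most $\omega(\rho(t))$ uniformly in $s\in[0,t]$, whence
$$Dg_{i}(z)\,w_{t}\ =\ \frac{1}{t}\int_{0}^{t}L_{f}g_{i}\big(z,u(s)\big)\,ds\ \leq\ \omega(\rho(t)),\qquad i\in\II(z).$$

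The crucial point---and the only place where (A3) enters---is that the averaged velocity $w_{t}$ is itself an admissible velocity at $z$: the integrand $s\mapsto f(z,u(s))$ is measurable with values in the vectogram $f(z,U_{1})$, so $w_{t}\in\co\big(f(z,U_{1})\big)=f(z,U_{1})$, the last equality because $f(z,U_{1})$ is convex by (A3) and compact ($U_{1}$ compact, $f(z,\cdot)$ continuous). Choosing $t_{n}\downarrow 0$, compactness of $f(z,U_{1})$ gives a subsequence with $w_{t_{n}}\to w^{\ast}=f(z,v^{\ast})$ for some $v^{\ast}\in U_{1}$; passing to the limit in the last display (whose right-hand side tends to $0$) yields $L_{f}g_{i}(z,v^{\ast})=Dg_{i}(z)w^{\ast}\leq 0$ for every $i\in\II(z)$. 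Hence $\max_{i\in\II(z)}L_{f}g_{i}(z,v^{\ast})\leq 0$, and therefore $\min_{u\in U_{1}}\max_{i\in\II(z)}L_{f}g_{i}(z,u)\leq 0$, which is the assertion.

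I expect the measurability of $u$ to be the only real obstacle: one cannot evaluate $u$ at $t=0$, so the tempting inequality $\frac{d}{dt}g_{i}\big(x^{(u,z)}(t)\big)\big|_{t=0}\leq 0$, i.e.\ $L_{f}g_{i}(z,u(0))\leq 0$, has no meaning. The averaging device replaces $u(0)$ by limits of averaged velocities, but such limits a priori only belong to $\co\big(f(z,U_{1})\big)$; it is precisely convexity of the vectogram, hypothesis (A3), that lets one realise the limit as $f(z,v^{\ast})$ with a genuine $v^{\ast}\in U_{1}$ (without it one would obtain only a relaxed-control statement). The remaining ingredients---absolute continuity of $t\mapsto g_{i}(x^{(u,z)}(t))$ with the stated a.e.\ derivative, and $\rho(t)\to 0$---are routine consequences of (A1) and (A4).
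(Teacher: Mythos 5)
Your proof is correct, and it takes a genuinely different technical route from the paper's. The paper argues directly at the initial instant: since $z\in\DAO\subset\AAA$ there is an admissible $u\in\UU$ whose trajectory stays in $G$, and the paper evaluates the velocity at $u(0+)$, the right limit of $u$ as $t\searrow 0$, claiming that $L_{f}g_{i}(z,u(0+))>0$ for some $i\in\II(z)$ would make the trajectory instantaneously leave $G$ through the face $g_{i}=0$; this is shorter but tacitly assumes that the right limit $u(0+)$ exists, which a general measurable control need not provide (the objection you raise at the end). Your argument replaces this pointwise evaluation by the averaged velocities $w_{t}=\frac{1}{t}\int_{0}^{t}f(z,u(s))\,ds$, controls the change of base point by a modulus of continuity, and then uses compactness of $f(z,U_{1})$ together with convexity (A3) to realise a limit of the $w_{t}$ as $f(z,v^{\ast})$ with an honest $v^{\ast}\in U_{1}$, giving $\max_{i\in\II(z)}L_{f}g_{i}(z,v^{\ast})\leq 0$. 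What each approach buys: the paper's proof is two lines and does not invoke (A3) at all (it only needs the constraint functions and the claim that a positive outward derivative forces immediate exit), whereas yours is longer but fully rigorous for arbitrary measurable admissible controls, at the modest price of using the vectogram convexity hypothesis, which is anyway assumed throughout the paper. Your derivation is complete: the absolute continuity of $t\mapsto g_{i}(x^{(u,z)}(t))$, the membership $w_{t}\in\co\big(f(z,U_{1})\big)=f(z,U_{1})$, and the passage to the limit along $t_{n}\downarrow 0$ are all justified correctly, and the conclusion $\min_{u\in U_{1}}\max_{i\in\II(z)}L_{f}g_{i}(z,u)\leq 0$ follows since $v^{\ast}$ is a feasible competitor for the minimum.
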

\begin{proof}
Since $\DAO \subset \AAA$, for all $z \in \DAO$, there must exist $u\in \UU$ and an integral curve $x^{(u,z)}$ which remains in $\AAA$ for all times. Therefore, at $t=0$, the vector $f(z,u(0+))$, where $u(0+)$ is the right limit of $u(t)$ when $t\searrow 0$, cannot point outwards of $G_{-}$, i.e. $L_{f}g_{i}(z,u(0+))\leq 0$ for all $i\in \II(z)$; otherwise, if there exists $i \in \II(z)$ such that $L_{f}g_{i}(z,u(0+)) > 0$, then the integral curve $x^{(u,z)}$ instantaneously leaves $G$ through the ``face'' given by equation $g_{i}=0$. Therefore, $\max_{i \in \II(z)} L_{f} g_{i} (z,u(0+)) \leq 0$, hence (\ref{us-boundary:eq}).
\end{proof}

\begin{rem} The subset of the boundary $G_{0}$ satisfying (\ref{us-boundary:eq}) is called the \emph{usable part of the constraint boundary} in reference to Isaacs' notion of usable part of a target\footnote{Note that, in our context, the boundary $G_{0}$ of the constraint (\ref{eq:state_const}) cannot be interpreted as a target since we do not \emph{a priori} want the trajectories to finish on $G_{0}$. On the contrary, $G_{0}$ may be considered as a target for the non admissible trajectories (starting in $\AC$) and the usable part of this target is given by the relation $\min_{u\in U_{1}} \max_{i \in \II(z)} L_{f} g_{i}(z,u) > 0$, opposite to (\ref{us-boundary:eq}).} \cite{Isaacs}. See also e.g. \cite{Aubin,Quincampoix_siam}.
\end{rem}

We now turn to the study of $\DAM$, the complement of $\DAO$ in the boundary $\DA$.
\begin{pr}\label{boundary:prop} Assume that (A1) to (A4) hold.
The boundary subset $\DAM$ is made of points $\bar{x}\in G_{-}$ for which there exists $\bar{u}\in \UU$ and an arc of integral curve $x^{(\bar{u},\bar{x})}$ entirely contained in $\DAM$ until it intersects $G_0$ at a point $x^{(\bar{u},\bar{x})}(\bar{t})$ such that $g(x^{(\bar{u},\bar{x})}(\bar{t}))\circeq 0$ for some (possibly infinite) $\bar{t}\in [0,+\infty)$.
\end{pr}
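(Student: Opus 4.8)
The plan is to produce the required control directly from the definition of $\AAA$. Since $\DAM \subset \DA \subset \AAA$ and $\AAA$ is closed (Corollary~\ref{close-cor}), for any $\bar{x}\in\DAM$ there is, by the very definition of $\AAA$, a control $\bar{u}\in\UU$ such that the integral curve $\xi\triangleq x^{(\bar{u},\bar{x})}$ satisfies $g(\xi(t))\preceq 0$, hence $\xi(t)\in G$, for all $t\geq 0$. Put $\bar{t}\triangleq\inf\{t\geq 0:\xi(t)\in G_{0}\}$, with the convention $\inf\emptyset=+\infty$. As $G_{0}=G\setminus G_{-}$ is closed ($G$ is closed and $G_{-}$ is open in $\RR^{n}$) and $\xi$ is continuous, the set $\xi^{-1}(G_{0})$ is closed; therefore, if $\bar{t}<+\infty$ then $\xi(\bar{t})\in G_{0}$, i.e. $g(\xi(\bar{t}))\circeq 0$, while $\xi(t)\in G\setminus G_{0}=G_{-}$ for every $t\in[0,\bar{t})$. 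Note also that $\bar{t}>0$, since $\bar{x}\in G_{-}$ and $G_{-}$ is open, so the arc on $[0,\bar{t})$ is nondegenerate.

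It remains to show that $\xi(t)\in\DA$ for every $t\in[0,\bar{t})$; together with the above this yields $\xi(t)\in\DA\cap G_{-}=\DAM$ on $[0,\bar{t})$, which completes the proof (when $\bar{t}=+\infty$ the clause ``until it intersects $G_{0}$'' is vacuous, which is precisely why $\bar{t}$ is allowed to be infinite). I would argue by contradiction: suppose $\xi(t_{1})\notin\DA$ for some $t_{1}\in[0,\bar{t})$. By time-invariance the tail control $s\mapsto\bar{u}(t_{1}+s)$ keeps the trajectory issued from $\xi(t_{1})$ inside $G$, so $\xi(t_{1})\in\AAA$; since $\xi(t_{1})\notin\DA$, the point $\xi(t_{1})$ lies in the interior of $\AAA$ relative to $G$, and since $\xi(t_{1})\in G_{-}$ with $G_{-}$ open this is interiority in $\RR^{n}$, i.e. $B(\xi(t_{1}),\rho)\subset\AAA$ for some $\rho>0$. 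Because $\xi([0,t_{1}])$ is a compact subset of $G_{-}$, the number $\delta\triangleq-\max_{t\in[0,t_{1}]}\max_{i}g_{i}(\xi(t))$ is strictly positive. By continuous dependence of the solutions of $\dot{x}=f(x,\bar{u}(t))$ on the initial condition, uniformly on $[0,t_{1}]$ (a consequence of (A1); see Appendix~\ref{Append-A}), there is $r>0$ such that for every $x_{0}'\in B(\bar{x},r)$ the solution $x^{(\bar{u},x_{0}')}$ is defined on $[0,t_{1}]$ and stays in an arbitrarily thin tube around $\xi$; choosing that tube thin enough — using local Lipschitzness of $g$ (from (A4)) together with the margin $\delta$ to keep the trajectory in $G_{-}$, and taking the tube radius smaller than $\rho$ — forces $x^{(\bar{u},x_{0}')}(t)\in G_{-}$ for all $t\in[0,t_{1}]$ and $x^{(\bar{u},x_{0}')}(t_{1})\in B(\xi(t_{1}),\rho)\subset\AAA$. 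Concatenating $\bar{u}$ on $[0,t_{1}]$ with a control that keeps the trajectory in $G$ from $x^{(\bar{u},x_{0}')}(t_{1})$ (such a control exists since $x^{(\bar{u},x_{0}')}(t_{1})\in\AAA$, and the resulting curve is the concatenation of the two arcs, as recalled in Section~\ref{sec:ConsDynCon}) shows $x_{0}'\in\AAA$. Hence $B(\bar{x},r)\subset\AAA$, contradicting $\bar{x}\in\DA$.

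I expect the last contradiction argument to be the main obstacle, specifically the ``uniform tube'' estimate: one must keep the perturbed trajectory inside the \emph{open} set $G_{-}$ along the whole compact segment $[0,t_{1}]$ — which relies on the strict margin $\delta$ and on uniform continuity (local Lipschitzness) of $g$ — and simultaneously land its endpoint inside the open admissible neighborhood of $\xi(t_{1})$, before invoking positive invariance of $\AAA$ (the Markov property of Definition~\ref{def:admiss_states}) and the concatenation property to conclude. Everything else — existence of $\bar{u}$, closedness of $G_{0}$, the identification of $\bar{t}$, and the final bookkeeping — is routine. Note that assumption (A3) (convexity of the vectogram) plays no role here, since we perturb only the initial condition of a \emph{fixed} control rather than the control itself; the argument uses only the smoothness hypotheses (A1) and (A4).
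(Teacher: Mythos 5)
Your tube/continuous-dependence argument is sound and is a genuinely different route from the paper's: the paper never perturbs initial conditions at this point, but instead invokes the characterisation of Proposition~\ref{min-sup-prop}(iii) (the min--sup value in (\ref{DAopt}) equals $0$ exactly on $\DA$) together with a dynamic-programming/concatenation argument along the trajectory. What you prove in the contradiction step (no admissible trajectory from a point of $\DAM$ can reach $\Int(\AAA)$ while it stays in $G_{-}$) is essentially Corollary~\ref{bar-sem-cor}, which the paper deduces \emph{after} this proposition; your Gr\"{o}nwall-type estimate on $[0,t_{1}]$, the compactness margin $\delta$, and the concatenation step do establish it correctly, and in fact for every admissible control, not just a particular one.

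The gap lies in your choice of $\bar{u}$ and in the way you dispose of the case $\bar{t}=+\infty$. The proposition is not merely ``the arc stays in $\DAM$ while in $G_{-}$''; its point -- used verbatim at the start of Section~\ref{sec:UltTangCond} -- is that this arc runs into $G_{0}$, where ``possibly infinite $\bar{t}$'' is interpreted in the paper's proof (see its footnote) as the trajectory remaining in $G_{-}$ but with $\lim_{t\rightarrow+\infty}x^{(\bar{u},\bar{x})}(t)\in G_{0}$, i.e.\ the arc at least asymptotically reaches the constraint boundary. With an arbitrary admissible $\bar{u}$ nothing forces this: the trajectory may stay in $G_{-}$ forever, bounded away from $G_{0}$ (e.g.\ settle on an equilibrium or a closed orbit contained in $\DAM$), and then you have exhibited no arc that intersects or approaches $G_{0}$; declaring the clause ``vacuous'' silently weakens the statement. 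The paper avoids this by taking $\bar{u}$ to be a minimiser realising (\ref{DAopt}) -- whose existence is exactly what the compactness argument behind Proposition~\ref{min-sup-prop} provides -- so that $\sup_{t\geq 0}\max_{i}g_{i}(x^{(\bar{u},\bar{x})}(t))=0$; then either this supremum is attained at a finite first time $\bar{t}$, giving $g(x^{(\bar{u},\bar{x})}(\bar{t}))\circeq 0$, or the trajectory approaches $G_{0}$ as $t\rightarrow\infty$. If you replace ``any admissible control'' by this minimising control, the rest of your argument goes through unchanged and yields the statement in full.
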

\begin{proof}
Let $\bar{x}\in \DAM$, therefore satisfying (\ref{DAopt}). In particular, there exists $\bar{u}\in \UU$ such that $\ds \sup_{t\in [0,\infty)}  \max_{i=1,\ldots, p}g_{i}(x^{(\bar{u},\bar{x})}(t)) = 0$, and
$\bar{t}\in [0, +\infty)$ such that $\ds \max_{i=1,\ldots, p}g_{i}(x^{(\bar{u},\bar{x})}(\bar{t})) = 0$ for the first time (i.e. $\ds \max_{i=1,\ldots, p}g_{i}(x^{(\bar{u},\bar{x})}(t)) < 0$ for all $t < \bar{t}$). 
Then for an arbitrary $t_0 \in [0,\bar{t}[$, the point $\xi = x^{(\bar{u},\bar{x})}(t_0) \in G_{-}$ satisfies, by a standard dynamic programming argument (since $x^{(\bar{u},\xi)}(t)=x^{(\bar{u},\bar{x})}(t_0+t)$ for all $t\geq 0$), $\ds \min_{u\in \UU} \sup_{t\in [0,\infty)}  \max_{i=1,\ldots, p}g_{i}(x^{(u,\xi)}(t)) = 0$. It follows that $\xi\in \DAM$ and, therefore, the whole arc of integral curve starting from $\bar{x}\in G_{-}$ is entirely contained in $\DAM$ until it intersects\footnote{If $\bar{t} = +\infty$, the whole integral curve $x^{(\bar{u},\bar{x})}$ remains in $G_{-}$ and satisfies $\lim_{t\rightarrow +\infty} x^{(\bar{u},\bar{x})}(t) \in G_{0}$.} $G_0$ at time $\bar{t}$.
\end{proof}

\begin{rem}
Note that the arc of integral curve $x^{(\bar{u},\bar{x})}$ of Proposition~\ref{boundary:prop} may remain in $G_{0}$ after $\bar{t}$ for some time and then return to $G_{-}$.
\end{rem}

\begin{cor}\label{bar-sem-cor}
From any point on the boundary $\DAM$, there cannot exist a trajectory penetrating the interior of $\AAA$, denoted by $\Int(\AAA)$, before leaving $G_{-}$.
\end{cor}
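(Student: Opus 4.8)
\medskip
\noindent\textbf{Proof proposal.}\ I would argue by contradiction, leaning on the value-function characterisation of Proposition~\ref{min-sup-prop} together with the concatenation bookkeeping of Section~\ref{sec:ConsDynCon}, rather than on semipermeability directly. Write
\[
W(z)\triangleq\min_{u\in\UU}\ \sup_{t\in[0,\infty)}\ \max_{i=1,\ldots,p}g_{i}\big(x^{(u,z)}(t)\big),
\]
so that, by Proposition~\ref{min-sup-prop}, $z\in\AAA\Leftrightarrow W(z)\leq0$, $z\in\AC\Leftrightarrow W(z)>0$, $z\in\DA\Leftrightarrow W(z)=0$, and, by the minimising-sequence argument used there (via Lemma~\ref{compact-lem}), the inner minimum defining $W(z)$ is attained for every $z\in G$. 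Assume, contrary to the claim, that $\bar{x}\in\DAM$ and that there exist $u\in\UU$ and $t_{1}>0$ such that the connected arc $\wideparen{\bar{x},x^{(u,\bar{x})}(t_{1})}$ lies in $G_{-}$ while $z_{1}\triangleq x^{(u,\bar{x})}(t_{1})\in\Int(\AAA)$.

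The first step is to record the \emph{strict} inequality $W(z_{1})<0$: since $z_{1}\in\AAA$ we have $W(z_{1})\leq0$, and $W(z_{1})=0$ would force $z_{1}\in\DA$ by Proposition~\ref{min-sup-prop}(iii), contradicting $z_{1}\in\Int(\AAA)$ because interior and boundary are disjoint. Next I would pick a control attaining the minimum that defines $W(z_{1})$ and, using the time-invariance of \eqref{eq:state_space}, graft it onto $u$ at time $t_{1}$ via the concatenation operator $\Join_{t_{1}}$ of Section~\ref{sec:ConsDynCon}, obtaining $v=u\Join_{t_{1}}w\in\UU$ whose integral curve coincides with $x^{(u,\bar{x})}$ on $[0,t_{1}]$ and, on $[t_{1},\infty)$, realises $W(z_{1})$ from $z_{1}$. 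Then
\[
\sup_{t\in[0,\infty)}\ \max_{i=1,\ldots,p}g_{i}\big(x^{(v,\bar{x})}(t)\big)
=\max\!\Big(\ \max_{t\in[0,t_{1}]}\ \max_{i=1,\ldots,p}g_{i}\big(x^{(u,\bar{x})}(t)\big),\ W(z_{1})\ \Big).
\]
The first argument of the outer $\max$ is the maximum of the continuous function $t\mapsto\max_{i}g_{i}(x^{(u,\bar{x})}(t))$ over the compact set $[0,t_{1}]$; since the arc lies in the \emph{open} set $G_{-}=\{g\prec0\}$, this maximum is attained and is strictly negative. Hence the whole right-hand side is strictly negative, which forces $W(\bar{x})<0$ and contradicts $\bar{x}\in\DA$, i.e. $W(\bar{x})=0$. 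Therefore no such $u$ and $t_{1}$ exist, which is the assertion.

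I expect the only genuinely delicate points to be the two strictness statements — that $W<0$ throughout $\Int(\AAA)$, and that the supremum of $\max_{i}g_{i}$ along the arc interior to the open set $G_{-}$ is strictly negative — and both reduce to elementary facts (disjointness of interior and boundary; attainment of a maximum on a compact set). Everything else is the concatenation formalism already established in Section~\ref{sec:ConsDynCon} and the attainment of the minimum from Proposition~\ref{min-sup-prop}. As an alternative route one could derive the corollary from the semipermeability of $\DAM=\DA\cap G_{-}$ proved in Proposition~\ref{semiperm:prop}: approximate $\bar{x}$ by a sequence $x_{k}\in\AC\cap G_{-}$ (possible since $\bar{x}\in\DA\subset\cl(\AC)$ and $G_{-}$ is open), and use continuous dependence of solutions on the initial condition, uniform on $[0,t_{1}]$ (Appendix~\ref{Append-B}), together with $x^{(u,\bar{x})}([0,t_{1}])$ being a compact subset of the open set $G_{-}$ and $x^{(u,\bar{x})}(t_{1})$ lying in the open set $\Int(\AAA)$, to produce, for $k$ large, a trajectory starting at $x_{k}\in\AC\cap G_{-}$ that reaches $\AAA$ without leaving $G$ — contradicting Proposition~\ref{semiperm:prop}. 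In that version the main technical obstacle is the uniform continuous dependence on compact time intervals for measurable controls; the value-function proof above sidesteps it and is, I think, the cleaner plan.
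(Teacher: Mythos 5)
Your proposal is correct and follows essentially the same route as the paper's own proof: assume a trajectory from the barrier point reaches $\Int(\AAA)$ while the arc stays in $G_{-}$, concatenate it (via $\Join$) with a control realising a strictly negative value of $\min_{u\in\UU}\sup_{t}\max_{i}g_{i}$ from the interior point, note that the sup along the compact arc inside the open set $G_{-}$ is strictly negative, and conclude that $\bar{x}$ would lie in $\Int(\AAA)$, contradicting $\bar{x}\in\DAM$. The only cosmetic difference is that you phrase the final contradiction through the value function ($W(\bar{x})<0$ versus $W(\bar{x})=0$) while the paper words it as $\bar{x}\in\Int(\AAA)$ versus $\bar{x}\in\DA$, which is the same thing by Proposition~\ref{min-sup-prop}.
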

\begin{proof}
Let $\bar{x}\in \DAM$. According to Proposition \ref{boundary:prop}, there exists  $\bar{u} \in \UU$ such that $x^{(\bar{u},\bar{x})}(t)\in \DAM$ for all $t\in [0, \bar{t}[$, where $\bar{t}$ is the first time such that $x^{(\bar{u},\bar{x})}(\bar{t}) \in G_{0}$. Given $\tau \in [0, \bar{t}[$, we denote  $\xi=x^{(\bar{u},\bar{x})}(\tau)$. Assume, by contradiction, that there exists $\tilde{u} \in \UU$ and $\sigma \in [0,\ee[$, with $\ee >0$ small enough so that $x^{(\tilde{u},\xi)}(\tau+t) \in G_{-}$ for all $t\in [0, \ee[$ and that $\zeta \triangleq x^{(\tilde{u},\xi)}(\tau + \sigma) \in \Int(\AAA)$. As a consequence of (\ref{AAAopt}) and (\ref{DAopt}), there exists $v\in \UU$ such that $\ds \sup_{t\in [0,\infty)}\max_{i=1,\ldots,p} g_{i}(x^{(v,\zeta)}(\tau + \sigma + t)) < 0$. Setting $\tilde{v}= \bar{u} \Join_{\tau} \tilde{u} \Join_{\tau+\sigma}v$, we easily verify that $\ds \sup_{t\in [0,\infty)}\max_{i=1,\ldots, p} g_{i}(x^{(\tilde{v},\bar{x})}(t)) <0$, which implies, again by (\ref{AAAopt}) and (\ref{DAopt}), that $\bar{x} \in \Int(\AAA)$, hence contradicting the fact that $\bar{x}\in \DAM$. We thus conclude that no integral curve starting in $\DAM$ can penetrate the interior of $\AAA$ before leaving $G_{-}$.
\end{proof}

\section{Ultimate tangentiality conditions}\label{sec:UltTangCond}

As concluded from Proposition~\ref{boundary:prop}, the boundary subset $\DAM$ is made of integral curves intersecting $G_0$. In this section, we provide a precise characterisation of how this intersection occurs. We will first deal with the particular case of a scalar state constraint and then we extend the result to the general multi-dimensional constraint case.

\subsection{The scalar state constraint case}
We  consider here the case $p=1$ in (\ref{eq:state_const}), hence the maximum over $\{1,\ldots, p\}$ in all previous expressions is not necessary.
\begin{pr}\label{ult-tan-1d-pr}
There exists a point $z= x^{(\bar{u},\bar{x})}(\bar{t})\in \cl(\DAM)\cap G_0$, i.e. such that $g(x^{(\bar{u},\bar{x})}(\bar{t}))$ $= 0$ for some finite time $\bar{t}\geq 0$ and satisfying
\begin{equation}\label{ult-tan-eq}
\min_{u\in U_{1}} L_{f}g (z,u) = 0.
\end{equation}
In other words, at such an intersection point $z$, there exists $u\in U_{1}$ such that the vector field $f(z,u)$ is  tangent to $G_0$ at $z$,
and $L_{f} g(z,v) \geq 0$ for all $v \in U_{1}$, i.e. the vector field $f(z,v)$ points outwards of $G_{-}$ at $z$ for all $v\in U_{1}$.
\end{pr}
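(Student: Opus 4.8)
\medskip
\noindent\textbf{Proof strategy.}
The plan is to establish separately the two inequalities ``$\min_{u\in U_{1}}L_{f}g(z,u)\leq 0$'' and ``$\min_{u\in U_{1}}L_{f}g(z,u)\geq 0$'' in \eqref{ult-tan-eq}. The first is essentially a restatement of Proposition~\ref{pr:usable}; the second is the substantive part.

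\emph{Existence of $z$ and the inequality $\leq 0$.} By Proposition~\ref{boundary:prop} (together with Proposition~\ref{min-sup-prop}(iii)) choose $\bar x\in\DAM$, a control $\bar u\in\UU$ with $\sup_{t\geq 0}g(x^{(\bar u,\bar x)}(t))=0$, and a time $\bar t$ such that $x^{(\bar u,\bar x)}(t)\in\DAM$ for all $t<\bar t$ and $z\triangleq x^{(\bar u,\bar x)}(\bar t)\in G_{0}$; restricting if necessary to a barrier arc that actually reaches $G_{0}$ in \emph{finite} time, we take $\bar t<+\infty$. Since $\AAA$ is closed its boundary $\DA$ is closed, and as $x^{(\bar u,\bar x)}(t)\in\DAM\subset\DA$ for all $t<\bar t$, letting $t$ tend to $\bar t$ from below gives $z\in\DA\cap G_{0}=\DAO$. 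Proposition~\ref{pr:usable}, applied with $\II(z)=\{1\}$ since here $p=1$, then yields exactly $\min_{u\in U_{1}}L_{f}g(z,u)\leq 0$.

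\emph{The inequality $\geq 0$.} Argue by contradiction and suppose there is $\hat u\in U_{1}$ with $L_{f}g(z,\hat u)=-2\delta<0$. By (A1) and (A4) the map $x\mapsto L_{f}g(x,\hat u)$ is continuous, so there is an open neighbourhood $W$ of $z$ on which $L_{f}g(\cdot,\hat u)\leq-\delta$. Hence, for every $y\in G$ close enough to $z$, applying the constant control $\hat u$ for a short time $s>0$ drives $y$ to a point $y'=x^{(\hat u,y)}(s)$ with $g(y')\leq g(y)-\delta s\leq-\delta s<0$, i.e.\ strictly into $G_{-}$ with a negative margin, while $y'$ can be kept as close to $z$ as desired by taking $\|y-z\|$ and $s$ small. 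The aim is to show that this ``buffering'' makes an entire relative neighbourhood of $z$ in $G$ admissible: combining the negative margin just created with the control $\bar u(\bar t+\cdot)$ --- which is admissible and keeps $g\leq 0$ along the integral curve issued from $z$, since $\sup_{t\geq\bar t}g(x^{(\bar u,\bar x)}(t))\leq 0$ --- together with continuous dependence of the solutions on the initial condition, one produces for each such $y$ a control $v\in\UU$ (applying $\hat u$ first and then the shifted control $\bar u(\bar t+\cdot)$ from $y'$ on) with $g(x^{(v,y)}(t))\leq 0$ for all $t\geq 0$, whence $y\in\AAA$. This forces $z\in\Int(\AAA)$, contradicting $z\in\DAO\subset\DA$; therefore $\min_{u\in U_{1}}L_{f}g(z,u)\geq 0$. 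Combined with the first step this proves \eqref{ult-tan-eq}, and the geometric reformulation (existence of $u\in U_{1}$ with $f(z,u)$ tangent to $G_{0}$, and $L_{f}g(z,v)\geq 0$ for every $v\in U_{1}$) is then immediate.

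\emph{Expected main obstacle.} The delicate point is the ``buffering'' step in the second part. Continuous dependence on the initial condition is uniform only on compact time intervals, so the strictly negative margin of order $\delta s$ created in finite time must be shown to persist --- or to be re-created by re-applying the push $\hat u$ each time the trajectory comes back close to $G_{0}$ --- over the whole half-line $[0,\infty)$; this is precisely where the compactness and uniform-convergence properties of admissible solutions established in Appendix~\ref{Append-A} must be used. A more routine, bookkeeping issue is to ensure at the outset that some barrier arc meets $G_{0}$ at a finite time $\bar t$ rather than only asymptotically.
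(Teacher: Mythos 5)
Your decomposition (the inequality $\min_{u\in U_{1}}L_{f}g(z,u)\leq 0$ via Proposition~\ref{pr:usable}, the reverse inequality by a separate argument) is reasonable, but the two points you yourself flag are precisely where the content of the proposition lies, and neither is closed. First, the finiteness of $\bar t$ is not bookkeeping: Proposition~\ref{boundary:prop} explicitly allows $\bar t=+\infty$, with the barrier arc only approaching $G_{0}$ asymptotically, and nothing you invoke guarantees that ``a barrier arc that actually reaches $G_{0}$ in finite time'' exists. In the paper this finiteness is produced rather than assumed: since $\bar x\in\DAM$, one can choose perturbed initial points $\bar x+\ee h\in\AC$ and needle variations $u_{\kappa,\ee}$ of $\bar u$ (Lemma~\ref{approx-lem}); each perturbed trajectory starts in $\AC$ and therefore must cross $G_{0}$ at a finite time $t(\kappa,\ee,h)$, and uniform convergence as $\ee\to 0$ yields a bounded family of crossing times whose limit furnishes the finite $\bar t$ and the point $z$.

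Second, your contradiction step does not go through as written. From $L_{f}g(\cdot,\hat u)\leq-\delta$ near $z$ you obtain a margin $g(y')\leq-\delta s$ only at the switching instant; to conclude $y\in\AAA$ you must keep $g\leq 0$ for \emph{all} $t\geq 0$ after switching to the tail control $\bar u(\bar t+\cdot)$, whereas continuous dependence gives closeness to the reference trajectory only on compact intervals, and that reference trajectory may return to $G_{0}$ at arbitrarily large times, where the initial margin is useless. Your proposed repair---re-applying the inward push whenever the trajectory comes back near $G_{0}$---is not available, because at those later contact points no inward-pointing control need exist (at ultimate tangentiality points one has $\min_{u}L_{f}g\geq 0$, which is the very statement being proved); the compactness results of Appendix~\ref{Append-A} do not supply this infinite-horizon uniformity. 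So the implication ``$\exists\,\hat u$ with $L_{f}g(z,\hat u)<0\ \Rightarrow\ z\in\Int(\AAA)$'' is exactly the unproved crux. The paper avoids it entirely: from the fact that the perturbed trajectories cross $G_{0}$ while the nominal one satisfies $g\leq 0$, the first-order expansion \eqref{approx-eq}--\eqref{needle-eq} gives $Dg(z)\,\Phi^{\bar u}(\bar t,\tau)\left(f(x^{(\bar u,\bar x)}(\tau),v)-f(x^{(\bar u,\bar x)}(\tau),\bar u(\tau))\right)\geq 0$ for all $v\in U_{1}$ and all Lebesgue points $\tau\leq\bar t$, hence $L_{f}g(z,\bar u_{-}(\bar t))=\min_{v\in U_{1}}L_{f}g(z,v)$; combining this with $L_{f}g(z,\bar u_{-}(\bar t))\geq 0$ (since $g(x^{(\bar u,\bar x)}(t))\leq 0=g(z)$ for $t<\bar t$) and with Proposition~\ref{pr:usable} yields \eqref{ult-tan-eq}. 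You would need some version of this variational argument, or a genuinely new proof of the neighbourhood-admissibility claim, to complete your route.
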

\begin{proof}
Consider $\bar{x} \in \DAM \subset G_{-}$ and $\bar{u}\in \UU$ such that the integral curve $x^{(\bar{u},\bar{x})}(t) \in \DAM$ for all $t$ in some time interval until it reaches $G_0$ (as in Proposition~\ref{boundary:prop}) and, consider further that $\bar{u}\in \UU$ is prolonged in such a way that the integral curve remains in $G$ for all times (which is possible by the definition and closedness of $\AAA$).

Let us introduce a control variation $u_{\kappa,\ee}$ of $\bar{u}$ of the form (\ref{u-var-eq}), where $\tau$ is an arbitrary Lebesgue point of $\bar{u}$ before the integral curve $x^{(\bar{u},\bar{x})}$ intersects $G_0$.

Since $\bar{x} \in \DAM$, there exists an open set ${\mathcal O} \subset \RR^n$ such that $\bar{x}+\ee h \in \AC$ for all $h \in {\mathcal O}$ and $\Vert h\Vert \leq H$, with $H$ arbitrarily small, and all $\ee$ sufficiently small.
Therefore, by definition of $\AC$, the integral curve $t\mapsto x^{(u_{\kappa,\ee},\bar{x}+\ee h)}(t)$ must leave  $G_{-}$ and crosses $G_{0}$ at some $t(\kappa,\ee,h) < \infty$. Moreover, according to Lemma~\ref{approx-lem},  the family of integral curves $x^{(u_{\kappa,\ee},\bar{x}+\ee h)}$, indexed by $\ee$, converges to $x^{(\bar{u},\bar{x})}$ as $\ee \rightarrow 0$, uniformly with respect to $t$, $\kappa$ and $h$. 
Consequently, with the notations of Section~\ref{perturb-subsec-append-b}, the set 
$$\{ t(\kappa,\ee,h) : \kappa \in U_{1} \times [0,T] \times [0,L], \ee \in [0,\ee_{0}], \Vert h \Vert \leq H, h\in {\mathcal O} \}$$ 
is uniformly bounded and there exists a continuous scalar function $\rho$ such that 
$\lim_{\ee \rightarrow 0}\rho(\ee) = 0$ and 
$$ -\rho(\ee) \leq g(x^{(\bar{u},\bar{x})}(t(\kappa,\ee,h))) \leq \rho(\ee)$$ 
for all $\ee$ and $\Vert h\Vert$ sufficiently small and all $\kappa$. As a consequence of the uniform convergence of $x^{(u_{\kappa,\ee},\bar{x}+\ee h)}$ to $x^{(\bar{u},\bar{x})}$ and the continuity of the function $t \mapsto g\circ x^{(\bar{u},\bar{x})}(t)$, there exists some finite $\bar{t}$ satisfying $\lim_{\ee\rightarrow 0} g(x^{(u_{\kappa,\ee},\bar{x}+\ee h)}(t(\kappa,\ee, h) ))= 0 = g(x^{(\bar{u},\bar{x})}(\bar{t}))$. We denote $z \triangleq x^{(\bar{u},\bar{x})}(\bar{t}) \in G_0$.

By the definition of $t(\kappa,\ee, h)$ we have $g(x^{(u_{\kappa,\ee},\bar{x}+\ee h)}(t(\kappa,\ee, h) ))= 0$, and by the definition of $\bar{u}$ we have $g(x^{(\bar{u},\bar{x})}(t(\kappa,\ee, h))) \leq 0$. Thus, 
according to (\ref{approx-eq}) of Appendix~\ref{Append-B}, we have
$$\begin{aligned}
g(x^{(u_{\kappa,\ee},\bar{x}+\ee h)}(t(\kappa,\ee, h)))  &- g(x^{(\bar{u},\bar{x})}(t(\kappa,\ee, h))) \\&= \ee Dg(x^{(\bar{u},\bar{x})}(t(\kappa,\ee, h))) w(t(\kappa,\ee, h), \kappa,h) + O(\ee^2) \geq 0
\end{aligned}$$

Therefore, using (\ref{needle-eq}), after division by $\ee$,
$$\begin{aligned}
Dg(x^{(\bar{u},\bar{x})}&(t(\kappa,\ee, h)))\left[ \Phi^{\bar{u}}(t(\kappa,\ee, h),0)h \right.\\
&\left.+ \; l\Phi^{\bar{u}}(t(\kappa,\ee, h),\tau) \left( f(x^{(\bar{u},\bar{x})}(\tau), v) - f(x^{(\bar{u},\bar{x})}(\tau), \bar{u}(\tau)) \right)\right] + O(\ee)\geq 0.
\end{aligned}$$

Letting $\Vert h\Vert$ and $\ee$ tend to 0, dividing by $l$, and using again the uniform convergence, we readily get:
$$Dg(x^{(\bar{u},\bar{x})}(\bar{t}))\Phi^{\bar{u}}(\bar{t},\tau) \left( f(x^{(\bar{u},\bar{x})}(\tau), v) - f(x^{(\bar{u},\bar{x})}(\tau), \bar{u}(\tau)) \right) \geq 0$$
for all $v\in U_{1}$ and for all Lebesgue points $\tau \leq \bar{t}$.

Thus, if $\bar{t}$ is a Lebesgue point of $\bar{u}$, setting $\tau = \bar{t}$, we have, using the  Lie derivative notation,
$$
L_{f} g(x^{(\bar{u},\bar{x})}( \bar{t}),v) \geq
L_{f} g(x^{(\bar{u},\bar{x})}(\bar{t}),\bar{u}(\bar{t})) 
$$
for all $v\in U_{1}$, i.e. 
\begin{equation}\label{minLfg-eq}
L_{f} g(z, \bar{u}(\bar{t})) = \min_{v\in U_{1}} L_{f} g(z, v).
\end{equation}

Otherwise, if $\bar{t}$ is not a Lebesgue point of $\bar{u}$, choosing $\tau$ arbitrarily close to $\bar{t}$, since $\Phi^{\bar{u}}(\bar{t},\tau)$ is arbitrarily close to the identity matrix, the same conclusion holds true by replacing $\bar{u}(\bar{t})$ by the left limit $\bar{u}_{-}(\bar{t}) \triangleq \lim_{\tau \rightarrow \bar{t}, \tau \leq \bar{t}} \bar{u}(\tau)$. It suffices then to modify $\bar{u}$, without loss of generality, on the 0-measure set $\{\bar{t}\}$ by $\bar{u}_{-}(\bar{t})$ to obtain (\ref{minLfg-eq}).

Finally, since $z\in \DAO$, according to Proposition~\ref{pr:usable}, we have
$\min_{v} L_{f}g(z,v) \leq 0$.
But, since the mapping $t\mapsto g(x^{(\bar{u},\bar{x})}(t))$ is non decreasing in an interval $]\bar{t}-\delta, \bar{t}]$ with $\delta >0$ small enough, we also have 
$0 \leq L_{f}g(x^{(\bar{u},\bar{x})}(\bar{t}), \bar{u}_{-}(\bar{t}))= \min_{v} L_{f}g(z,v)$ and thus $\min_{v} L_{f}g(z,v) = 0$,
which achieves the proof of the proposition.
\end{proof}

\subsection{The multi-dimensional state constraint case}

We now consider the general case of multiple $p$ state constraints in (\ref{eq:state_const}).

\begin{pr}\label{ult-tan-nd-pr} 
There exists a point $z= x^{(\bar{u},\bar{x})}(\bar{t})\in \cl(\DAM)\cap G_0$, i.e. such that $g(x^{(\bar{u},\bar{x})}(\bar{t}))\circeq 0$ for some finite time $\bar{t}\geq 0$ and, with the notations introduced in Section~\ref{sec:ConsDynCon},
\begin{equation}\label{ult-tan-eq-mult}
\min_{u\in U_{1}} \max_{i \in \II (z)} L_{f} g_{i}(z,u) = 0.
\end{equation}
In other words, there exists a point $z$ at the intersection between $G_{0}$ and an arc of integral curve in the boundary $\DAM$, and $u\in U_{1}$ such that the vector field $f(z,u)$ is  tangent at $z$ to at least  one face defined by $g_{i}(x)=0$, for $i \in \II (z)$, and does not point outwards with respect to any face corresponding to $\II (z)$, i.e. $L_{f}g_{j}(z,u) \leq 0$ for all $j\in \II (z)$.
In addition, for all $v \in U_{1}$, the vector field $f(z,v)$ points outwards of $G_{-}$ at $z$, i.e. there exists $j\in \II(z)$ such that  $L_{f}g_{j}(z,v) \geq 0$.
\end{pr}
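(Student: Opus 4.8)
The plan is to mimic the scalar proof of Proposition~\ref{ult-tan-1d-pr}, but with the scalar function $g$ replaced by the finite maximum $\max_{i\in\II(z)}g_i$, and to handle the nonsmoothness of this maximum by restricting, near the intersection point $z$, to the index set $\II(z)$ and using that each $g_i$ with $i\in\II(z)$ is individually smooth. First I would pick $\bar{x}\in\DAM$ and $\bar{u}\in\UU$ as in Proposition~\ref{boundary:prop}, so that $x^{(\bar{u},\bar{x})}(t)\in\DAM$ for $t<\bar{t}$ and hits $G_0$ at the first time $\bar{t}$ with $\max_i g_i(x^{(\bar{u},\bar{x})}(\bar{t}))=0$, and prolong $\bar{u}$ so the curve stays in $G$ for all time (possible by closedness of $\AAA$). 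Set $z\triangleq x^{(\bar{u},\bar{x})}(\bar{t})$; then $z\in G_0$, so $\II(z)\neq\emptyset$, and, since $z\in\DAO$, Proposition~\ref{pr:usable} already gives $\min_{u\in U_1}\max_{i\in\II(z)}L_f g_i(z,u)\leq 0$. So the whole content is the reverse inequality, $\min_{u\in U_1}\max_{i\in\II(z)}L_f g_i(z,u)\geq 0$, together with the ``outward'' statement.

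Next I would run exactly the needle-variation argument of the scalar case: introduce the control variation $u_{\kappa,\ee}$ of the form~(\ref{u-var-eq}) about a Lebesgue point $\tau<\bar{t}$, and perturb the initial condition by $\ee h$ with $h\in\mathcal{O}$, $\bar{x}+\ee h\in\AC$. Because $\bar{x}+\ee h\in\AC$, the perturbed curve $x^{(u_{\kappa,\ee},\bar{x}+\ee h)}$ must cross $G_0$, i.e. hit some face $g_j=0$, at a finite time $t(\kappa,\ee,h)$; by Lemma~\ref{approx-lem} these times stay uniformly bounded and $t(\kappa,\ee,h)\to\bar{t}$ as $\ee\to 0$ (after extracting if necessary), with the curves converging uniformly to $x^{(\bar{u},\bar{x})}$. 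For small $\ee$ the crossing can only occur through a face indexed by some $j\in\II(z)$ — this is the one genuinely new point, and follows because $g_j(z)<0$ for $j\notin\II(z)$ together with uniform convergence keeps those constraints strictly inactive near $\bar{t}$. By a pigeonhole argument over the finite set $\II(z)$, I would fix a single index $j\in\II(z)$ such that the crossing happens through the face $g_j=0$ for a sequence $\ee_k\to 0$ (and, along that sequence, for a fixed choice of the variation parameters taken to their limits). For that $j$, on the perturbed curve $g_j$ at the crossing time equals $0$ while on the nominal curve $g_j(x^{(\bar{u},\bar{x})}(t(\kappa,\ee,h)))\leq 0$; feeding this into the first-order expansion~(\ref{approx-eq}), dividing by $\ee$, using~(\ref{needle-eq}), letting $\|h\|,\ee\to 0$ and then dividing by $l$, I obtain $L_f g_j(z,v)\geq L_f g_j(z,\bar{u}_-(\bar{t}))$ for every $v\in U_1$ (replacing $\bar{u}(\bar{t})$ by its left limit if $\bar{t}$ is not a Lebesgue point, exactly as in the scalar proof). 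Since $g_j$ on the nominal curve is nondecreasing just to the left of $\bar{t}$, we also have $L_f g_j(z,\bar{u}_-(\bar{t}))\geq 0$. Hence for every $v\in U_1$, $\max_{i\in\II(z)}L_f g_i(z,v)\geq L_f g_j(z,v)\geq L_f g_j(z,\bar{u}_-(\bar{t}))\geq 0$, which is precisely the ``outward'' statement and also gives $\min_{v\in U_1}\max_{i\in\II(z)}L_f g_i(z,v)\geq 0$. Combined with Proposition~\ref{pr:usable} this yields~(\ref{ult-tan-eq-mult}), and evaluating at $v=\bar{u}_-(\bar{t})$ (after modifying $\bar{u}$ on the null set $\{\bar{t}\}$) shows the minimum is attained there, so $L_f g_j(z,\bar{u}_-(\bar{t}))=0$, i.e. $f(z,\bar{u}_-(\bar{t}))$ is tangent to the face $g_j=0$ and, since $\max_{i\in\II(z)}L_f g_i(z,\bar{u}_-(\bar{t}))=\min_v\max_i L_f g_i(z,v)=0$, it does not point outward of any face in $\II(z)$.

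The main obstacle is the bookkeeping forced by the nonsmooth $\max$: one must argue that, for $\ee$ small, exit from $G$ occurs only through faces active at the limit point $z$, and then extract a single index $j\in\II(z)$ through which exit occurs along a fixed subsequence while simultaneously taking the variation parameters $(\kappa,h)$ to their limits — the order of these limits (first $\|h\|,\ee\to0$, then $l\to0$) must be chosen so the first-order expansion remains valid uniformly. Once a fixed index $j\in\II(z)$ is isolated, every remaining step is a verbatim transcription of the scalar argument applied to the single smooth function $g_j$, so no new analytic estimate beyond Lemmas~\ref{approx-lem} and the expansions~(\ref{approx-eq})--(\ref{needle-eq}) is needed.
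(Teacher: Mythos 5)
Your proof is correct and takes essentially the same route as the paper: justify that for small $\ee$ the perturbed trajectories exit through a face active at $z$, extract by pigeonhole a fixed index $j\in\II(z)$, run the scalar needle-variation argument of Proposition~\ref{ult-tan-1d-pr} on the single smooth function $g_j$, and sandwich with Proposition~\ref{pr:usable} to get \eqref{ult-tan-eq-mult}. The only overreach is your closing claim that $\max_{i\in\II(z)}L_fg_i(z,\bar{u}_-(\bar{t}))=0$ (you only establish that $\bar{u}_-(\bar{t})$ minimises $L_fg_j(z,\cdot)$, while another active $g_i$ could satisfy $L_fg_i(z,\bar{u}_-(\bar{t}))>0$), but this is harmless since the ``in other words'' part of the statement only asserts existence of some $u\in U_1$, which follows from \eqref{ult-tan-eq-mult} by compactness of $U_1$ and continuity.
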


\begin{proof}
Consider $\bar{x} \in \DAM \subset G_{-}$ and a finite
$\bar{t}\geq 0$, obtained as the limit of the crossing times associated to a sequence of variations as in the proof of Proposition~\ref{ult-tan-1d-pr}, such that 
\begin{equation}\label{traj-DAM-eq}
\max_{i=1,\ldots, p}g_{i}(x^{(\bar{u},\bar{x})}(\bar{t})) = 0
\end{equation}   
and denote $z \triangleq x^{(\bar{u},\bar{x})}(\bar{t})$.

By the same argument as in the proof of Proposition~\ref{ult-tan-1d-pr}, each element of the perturbed sequence $x^{(u_{\kappa,\ee},\bar{x}+\ee h)}$ leaves $G_{-}$ through one of the faces of $G$,  say $i=i(\kappa,\ee, h)\in \II(z)$, of equation $g_{i(\kappa,\ee, h)}(x)=0$. Choosing a subsequence of $\{\kappa,\ee, h\}$, for $\ee$ and $h$ sufficiently small, such that $i(\kappa,\ee, h)=i_0$ for some fixed $i_0\in \II(z)$, we readily deduce from the proof of Proposition~\ref{ult-tan-1d-pr} that 
\begin{equation}\label{ult-tan-i0-eq}
\min_{u\in U_{1}} L_{f}g_{i_{0}} (z,u) = 0.
\end{equation}
Taking into account that $z\in \DAO$, the result of Proposition~\ref{pr:usable}, says that
$$0= \min_{u\in U_{1}} L_{f}g_{i_{0}} (z,u) \leq \min_{u\in U_{1}} \max_{i\in \II(z)} L_{f}g_{i} (z,u) \leq 0$$
which  immediately gives (\ref{ult-tan-eq-mult}).
\end{proof}

\section{The barrier equation}\label{sec:BarrEqn}

For topological reasons, we now restrict our attention to the part of the boundary $\DAM$ which intersects the closure of the interior of $\AAA$, denoted $\cl(\Int(\AAA))$,  assuming that $\Int(\AAA) \neq \emptyset$.

\begin{thm}\label{barrier:thm}
Under the assumptions of Proposition~\ref{boundary:prop}, every integral curve $x^{\bar{u}}$ on $\DAM \cap \cl(\Int(\AAA))$ and the corresponding control function $\bar{u}$, as in Proposition~\ref{boundary:prop},  satisfies the following necessary condition.

There exists a (non zero) absolutely continuous maximal solution $\lambda^{\bar{u}}$ to the adjoint equation
\begin{equation}\label{adjoint}
\dot{\lambda}^{\bar{u}}(t)= - \left( \frac{\partial f}{\partial x}(x^{\bar{u}}(t),\bar{u}(t))\right)^T\lambda^{\bar{u}}(t) , \quad \lambda^{\bar{u}}(\bar{t}) =  \left( Dg_{i^{\ast}}(z)\right)^{T}
\end{equation}
 such that
\begin{equation}\label{barriercond} \min_{u\in U_{1}} \left\{(\lambda^{\bar{u}}(t))^T f(x^{\bar{u}}(t),u)\right\}=(\lambda^{\bar{u}}(t))^T f(x^{\bar{u}}(t),\bar{u}(t))=0
\end{equation}
at every Lebesgue point $t$ of $\bar{u}$ (i.e. for almost all $t \leq \bar{t}$).

In (\ref{adjoint}), 
$\bar{t}$ denotes the time\footnote{Note that $\bar{t}$ can be chosen arbitrarily because of the time-invariance of the problem.} at which $z$ is reached, i.e. $x^{\bar{u}}(\bar{t})=z$, with $z\in G_0$ satisfying
$$g_{i}(z)=0, \quad i\in \II(z), \quad \min_{u\in U_{1}}  \max_{i \in \II (z)} L_f g_{i}(z,u) =  L_f g_{i^{\ast}}(z,u^{\ast}) = 0. $$

Moreover, $\lambda^{\bar{u}}(t)$ is  normal  to $\DAM \cap \cl(\Int(\AAA))$ at $x^{\bar{u}}(t)$  for almost every $t \leq \bar{t}$.
\end{thm}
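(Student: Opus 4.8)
The plan is to obtain the adjoint equation \eqref{adjoint} and the minimum condition \eqref{barriercond} as a consequence of the variational inequalities already derived in the proof of Proposition~\ref{ult-tan-nd-pr}, propagated backwards along the boundary trajectory, and then to interpret $\lambda^{\bar u}$ geometrically as a normal to $\DAM \cap \cl(\Int(\AAA))$. First I would fix $\bar x \in \DAM \cap \cl(\Int(\AAA))$ and $\bar u$ as in Proposition~\ref{boundary:prop}, together with the terminal point $z = x^{\bar u}(\bar t) \in G_0$ and the index $i^\ast \in \II(z)$ realising the minimum in \eqref{ult-tan-eq-mult}, so that $L_f g_{i^\ast}(z,u^\ast) = \min_{u\in U_1}\max_{i\in\II(z)}L_f g_i(z,u) = 0$. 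I would then revisit the needle/initial-state variation machinery of Section~\ref{sec:UltTangCond} and Appendix~\ref{Append-B}: for a Lebesgue point $\tau < \bar t$ of $\bar u$ and any $v \in U_1$, the same argument that produced the inequality $Dg_{i^\ast}(z)\,\Phi^{\bar u}(\bar t,\tau)\big(f(x^{\bar u}(\tau),v) - f(x^{\bar u}(\tau),\bar u(\tau))\big) \ge 0$ in the proof of Proposition~\ref{ult-tan-1d-pr} (applied to the face $i^\ast$) remains valid here. I would define $\lambda^{\bar u}$ to be precisely the maximal absolutely continuous solution of the adjoint equation \eqref{adjoint} with terminal condition $\lambda^{\bar u}(\bar t) = (Dg_{i^\ast}(z))^T$; by the variational equation (the transition matrix $\Phi^{\bar u}$ satisfies $\dot\Phi^{\bar u}(t,\tau) = \frac{\partial f}{\partial x}(x^{\bar u}(t),\bar u(t))\Phi^{\bar u}(t,\tau)$), one has the standard identity $(\lambda^{\bar u}(\tau))^T = (Dg_{i^\ast}(z))^T \Phi^{\bar u}(\bar t,\tau)$, so the above inequality reads $(\lambda^{\bar u}(\tau))^T f(x^{\bar u}(\tau),v) \ge (\lambda^{\bar u}(\tau))^T f(x^{\bar u}(\tau),\bar u(\tau))$ for all $v\in U_1$.

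Next I would pin down the value of the right-hand side. At $\tau = \bar t$ (or via the left-limit argument of Proposition~\ref{ult-tan-1d-pr} if $\bar t$ is not a Lebesgue point), the minimum $\min_{v\in U_1}(\lambda^{\bar u}(\bar t))^T f(z,v) = \min_{v\in U_1} L_f g_{i^\ast}(z,v)$, and since $z\in\DAO$, Proposition~\ref{pr:usable} combined with the monotonicity of $t\mapsto g_{i^\ast}(x^{\bar u}(t))$ near $\bar t$ forces this to equal $0$, hence $(\lambda^{\bar u}(\bar t))^T f(z,\bar u(\bar t)) = 0$. To push this to all $t\le\bar t$, I would use a dynamic-programming / concatenation argument: for any Lebesgue point $t_0 < \bar t$, the truncated trajectory from $\xi = x^{\bar u}(t_0)$ still lies in $\DAM$ (Proposition~\ref{boundary:prop}), reaches $G_0$ at $z$, and the same construction applied with $t_0$ as the new initial time yields $\min_{v\in U_1}(\lambda^{\bar u}(t_0))^T f(x^{\bar u}(t_0),v) = (\lambda^{\bar u}(t_0))^T f(x^{\bar u}(t_0),\bar u(t_0))$; the common value is $0$ because $t\mapsto (\lambda^{\bar u}(t))^T f(x^{\bar u}(t),\bar u(t))$ is constant — this is the classical fact that along the adjoint flow the Hamiltonian evaluated on the reference control is constant (here the dynamics and the cost are autonomous, so $\frac{d}{dt}\big[(\lambda^{\bar u}(t))^T f(x^{\bar u}(t),\bar u(t))\big] = 0$ at Lebesgue points by \eqref{adjoint} and the chain rule), and it equals its terminal value $0$. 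This gives \eqref{barriercond}.

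Finally, for the normality claim, I would argue that at a.e.\ $t\le\bar t$ the barrier surface $\DAM\cap\cl(\Int(\AAA))$ is, locally near $x^{\bar u}(t)$, a codimension-one $C^1$ manifold whose tangent space is spanned by the curve-direction $f(x^{\bar u}(t),\bar u(t))$ together with the directions swept out by varying the terminal index/point and by initial-state perturbations $h\in{\mathcal O}$; all such tangent directions $w$ satisfy $(\lambda^{\bar u}(t))^T w = 0$ — the curve direction by \eqref{barriercond}, and the perturbation directions because the inequality $(\lambda^{\bar u}(t))^T\Phi^{\bar u}$-image $\ge 0$ combined with its reverse (obtained by perturbing on the other side, using $\bar x\in\cl(\Int(\AAA))$ so that points on both sides are available) forces equality on the relevant cone, whose linear span is the tangent space. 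Hence $\lambda^{\bar u}(t)$ annihilates the tangent space and is therefore normal. I expect the main obstacle to be this last step: making rigorous that the collection of admissible variations spans a genuine hyperplane (rather than a proper subcone or subspace) — i.e.\ that the barrier is locally a $C^1$ hypersurface with the expected tangent space — which is where the hypothesis $\Int(\AAA)\neq\emptyset$ and $\bar x\in\cl(\Int(\AAA))$ must be used to rule out degeneracy, together with the convexity assumption (A3) on the vectogram to guarantee the two-sided nature of the attainable perturbations. Propagating the normalisation of $\lambda^{\bar u}$ (non-vanishing, guaranteed since $Dg_{i^\ast}(z)\neq 0$ by (A4) and $\Phi^{\bar u}$ invertible) is then routine.
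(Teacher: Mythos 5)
Your route is genuinely different from the paper's and, for the core conditions \eqref{adjoint}--\eqref{barriercond}, essentially sound. The paper does not re-run the needle-variation argument along the whole trajectory: it first proves (Proposition~\ref{att-boundary:prop}) that every point $x^{(\bar u,\bar x)}(t)$ with $\bar x\in\DAM\cap\cl(\Int(\AAA))$ lies on the boundary $\partial X_{t}(\bar x)$ of the attainable set for small $t$ (this is where $\cl(\Int(\AAA))$ and Corollary~\ref{bar-sem-cor} enter), then invokes the classical maximum principle for attainable-set boundaries (Theorem~\ref{extrem:thm}), patches the adjoint arcs $\eta^{\bar u}$ over consecutive small intervals using the homogeneity of \eqref{adjoint-eta}, sets $\lambda^{\bar u}=-\eta^{\bar u}$ to turn the max into a min, and finally fixes the terminal condition $(Dg_{i^{\ast}}(z))^{T}$ and the value $0$ of the constant from Proposition~\ref{ult-tan-nd-pr}. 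You instead define $\lambda^{\bar u}$ by backward integration from $(Dg_{i^{\ast}}(z))^{T}$ and read the minimum condition off the inequality $Dg_{i^{\ast}}(z)\,\Phi^{\bar u}(\bar t,\tau)\bigl(f(x^{\bar u}(\tau),v)-f(x^{\bar u}(\tau),\bar u(\tau))\bigr)\ge 0$, which the proofs of Propositions~\ref{ult-tan-1d-pr}--\ref{ult-tan-nd-pr} indeed establish for all Lebesgue points $\tau\le\bar t$, together with the identity $(\lambda^{\bar u}(\tau))^{T}=Dg_{i^{\ast}}(z)\Phi^{\bar u}(\bar t,\tau)$. This is a legitimate, more self-contained derivation (it gives the first equality in \eqref{barriercond} without appealing to attainable sets), whereas the paper's detour through $\partial X_t(\bar x)$ buys the conclusion directly from a cited theorem and also supplies the geometric picture (adjoint as supporting normal to the attainable set) from which the normality claim is read off.

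Two steps of your argument need repair. First, the constancy of $t\mapsto(\lambda^{\bar u}(t))^{T}f(x^{\bar u}(t),\bar u(t))$ does not follow ``by the chain rule'': $\bar u$ is only measurable, and even formally differentiating through $\bar u$ would leave a term in $\partial f/\partial u$. What is true (and what the paper imports as the ``constant'' in \eqref{barriercond-eta}) is the classical fact that, once $\bar u(t)$ minimises the Hamiltonian at almost every $t$, the minimised Hamiltonian $t\mapsto\min_{u\in U_{1}}(\lambda^{\bar u}(t))^{T}f(x^{\bar u}(t),u)$ is (locally Lipschitz and) constant for autonomous dynamics; you should invoke that argument, not the chain rule, before evaluating at $\bar t$ where ultimate tangentiality gives the value $0$. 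Second, your normality argument presupposes that $\DAM\cap\cl(\Int(\AAA))$ is locally a $C^{1}$ hypersurface with tangent space spanned by the perturbation directions; this is not established anywhere (and the academic example of Section~\ref{sec:Examples} shows the barrier can have corners), so the ``span a genuine hyperplane'' step you flag is indeed a gap. The paper avoids any smoothness claim: normality is understood in the supporting-hyperplane sense inherited from Theorem~\ref{extrem:thm}, namely $\lambda^{\bar u}(t)$ separates the perturbation cone (hence the vectogram and the local barrier directions) at $x^{\bar u}(t)\in\partial X_t(\bar x)$, and on that reading no two-sided spanning argument is needed.
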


Before proving this theorem, we provide several comments and we recall that the attainable set $X_{t}(\bar{x})$ from a point $\bar{x}\in \RR^{n}$ at time $t$ is the subset of $\RR^{n}$ defined by (\ref{attain-set}). 
As a direct consequence of Lemma~\ref{compact-lem} of  Appendix~\ref{Append-A}, $X_{t}(\bar{x})$ is compact for all finite $t$.
We denote by $\partial X_{t}(\bar{x})$ its boundary.

\begin{rem}
Theorem~\ref{barrier:thm} may be interpreted as follows:
Setting
\begin{equation}\label{Hamdef}
H(x,\lambda,u)= \lambda^{T}f(x,u)
\end{equation}
$\DAM \cap \cl(\Int(\AAA))$ is made of trajectories $t\mapsto x^{\bar{u}}(t)$, which are projections by
$$\pi: (x, \lambda, u) \mapsto~x= \pi (x, \lambda, u)$$
of trajectories of the triple $(x^{\bar{u}}, \lambda^{\bar{u}}, \bar{u})$, solution to the Hamiltonian system
\begin{equation}\label{hamilton}
\begin{array}{l}
\ds \dot{x}^{\bar{u}}(t)= \left(\frac{\partial H}{\partial \lambda}\right)^{T}(x^{\bar{u}}(t),\lambda^{\bar{u}}(t),\bar{u}(t)), \quad
\ds \dot{\lambda}^{\bar{u}}(t)=-\left( \frac{\partial H}{\partial x}\right)^{T}(x^{\bar{u}}(t),\lambda^{\bar{u}}(t),\bar{u}(t))\vspace{0.6em}\\
\ds H(x^{\bar{u}}(t),\lambda^{\bar{u}}(t),\bar{u}(t)) =  \min_{u\in U_{1}} H(x^{\bar{u}}(t),\lambda^{\bar{u}}(t),u)=0
\end{array}
\end{equation}
computed backwards in time from the final condition 
$$(x^{\bar{u}}(\bar{t}), \lambda^{\bar{u}}(\bar{t}), \bar{u}(\bar{t})) = (z,\left( Dg_{i^{\ast}}(z)\right)^{T},u^{\ast}) \in G_{0} \times \RR^{n} \times U_{1},$$ 
such that the set of equations
\begin{equation}\label{finalcond}
g_{i}(z)=0, \quad i\in \II(z), \quad \min_{u\in U_{1}} \max_{i \in \II(z)}  L_f g_{i}(z,u)=  L_f g_{i^{\ast}}(z,u^{\ast}) = 0
\end{equation}
admits a local solution.
\end{rem} 

\begin{figure}[h]
\begin{center}
\includegraphics[width=.6\columnwidth]{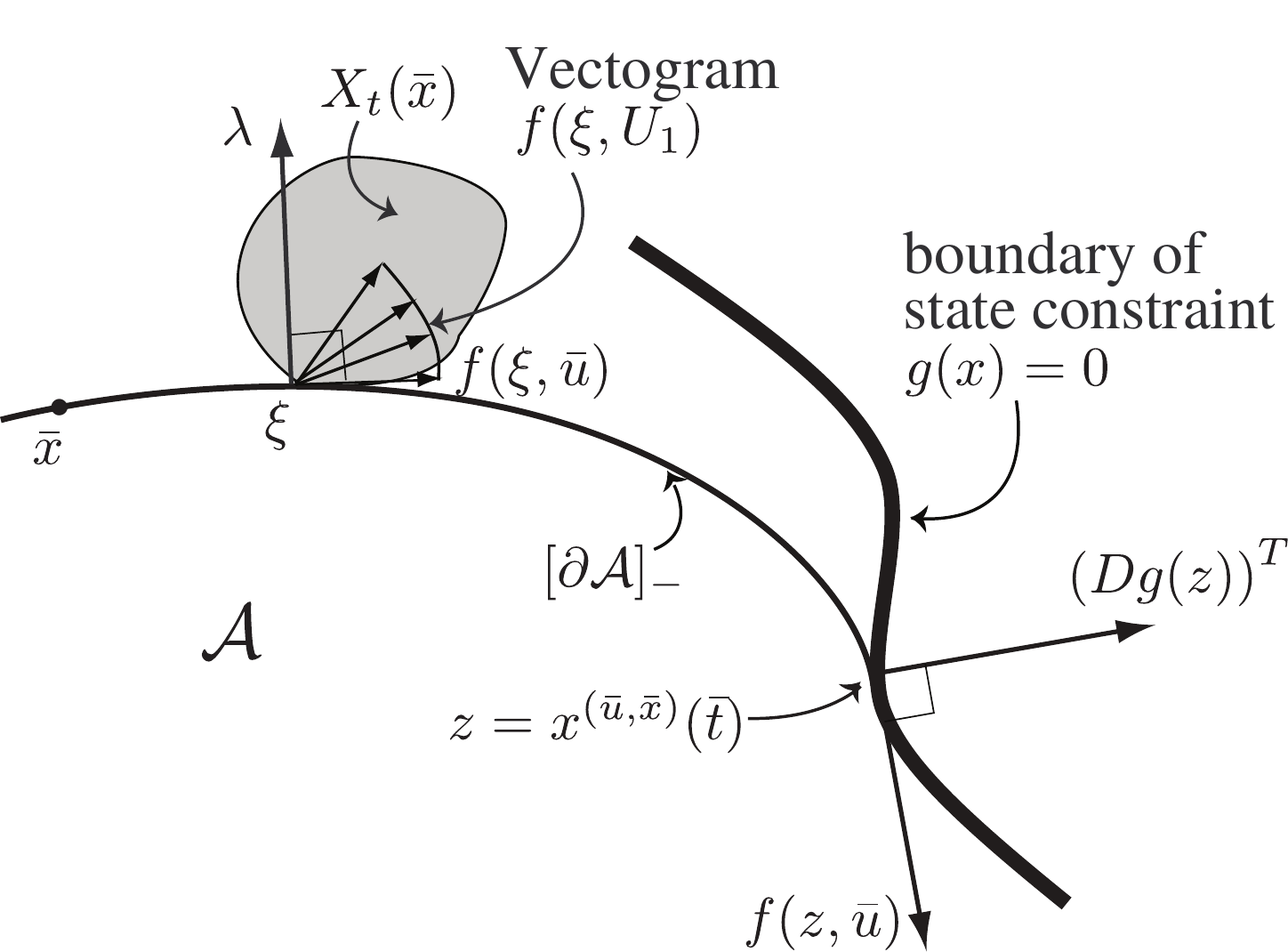}
\end{center}
\caption{Each point of the boundary, $\xi\in \DAM\cap \cl(\Int(\AAA))$, also belongs to the boundary of the attainable set, $\partial X_{t}(\bar{x})$, from $\bar{x}\in \DAM \cap \cl(\Int(\AAA))$ at every sufficiently small time $t$,  and the vectogram $f(\xi,U_{1})$ points outwards with respect to $\AAA$ and inwards with respect to $X_{t}(\bar{x})$. In other words, $\lambda$, the normal to $\DAM\cap \cl(\Int(\AAA))$, is such that $\lambda^{T} f(\xi,u) \geq 0$ for all $u\in U_{1}$.}\label{attfig}
\end{figure}

\begin{rem}
The necessary conditions (\ref{hamilton})-(\ref{finalcond}) may be compared to the one obtained by Isaacs in the context of barriers in differential games \cite{Isaacs} and, of course, to the Pontryagin maximum principle (PMP) of \cite{PBGM} in the context of optimal control. Note however that in our case, contrarily to those references, there is no a priori cost function to optimise (note that, even though the characterisation of the admissible set (\ref{AAAopt}) may be interpreted in terms of a minimisation problem, the cost function is not a standard one). Moreover, as far as the constraints cannot be directly interpreted as a separate player, there is no game involved.

However, it has been remarked since long (see e.g. \cite[Chapter 4, p. 239]{Lee_Markus},\cite[Section 10.2, p. 136]{Agrachev}) that the maximisation of the Hamiltonian in the PMP was in fact characterising the boundary of the attainable set from prescribed initial conditions. More precisely, at every boundary point of the attainable set, no vector of the tangent perturbation cone (see \cite{PBGM,Lee_Markus,Agrachev}), and in particular of the vectogram of admissible directions, can point outwards the attainable set, i.e. the scalar product of any such element of the perturbation cone with the normal vector to the boundary constituted by the adjoint is non positive (see Theorem~\ref{extrem:thm} in Appendix~\ref{Append-B}). In addition, the adjoint vector is obtained by parallel transport  of the final conditions, the so-called transversality conditions, along an optimal integral curve (Equation (\ref{adjoint-eta})). Here, we are in a similar situation, as depicted by Figure~\ref{attfig}, the only difference with respect to the PMP being that on $\DAM\cap \cl(\Int(\AAA))$ the adjoint vector $\lambda$ we consider is an outer normal to $\DAM\cap \cl(\Int(\AAA))$ and therefore a normal pointing inwards the attainable set, opposite to the one considered in the PMP, hence the minimum replacing the maximum with respect to $u$ (see Equation (\ref{barriercond})).
\end{rem}

We need the following proposition.

\begin{pr}\label{att-boundary:prop}
Let $\bar{x} \in \DAM\cap \cl(\Int(\AAA))$ and $\bar{u}\in \UU$ as in Proposition~\ref{boundary:prop}, i.e. such that $x^{(\bar{u},\bar{x})}(t)\in \DAM\cap \cl(\Int(\AAA))$ for all $t\in [0, \bar{t}[$, where $\bar{t}$ is the first time such that 
$$\max_{i=1,\ldots,p} g_{i}(x^{(\bar{u},\bar{x})}(\bar{t}))=0.$$ 
Then $x^{(\bar{u},\bar{x})}(t)\in \partial X_{t}(\bar{x})$ for all sufficiently small $0\leq t < \bar{t}$.
\end{pr}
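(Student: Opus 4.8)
The plan is to argue by contradiction: suppose that for some arbitrarily small $t^{\ast} \in [0,\bar{t})$ the point $\xi^{\ast} \triangleq x^{(\bar{u},\bar{x})}(t^{\ast})$ lies in the interior of the attainable set $X_{t^{\ast}}(\bar{x})$, and show this forces $\bar{x}$ into $\Int(\AAA)$, contradicting $\bar{x}\in\DAM$. First I would exploit continuity of the flow and openness: if $\xi^{\ast}\in\Int X_{t^{\ast}}(\bar{x})$, then there is a neighbourhood $\NNN$ of $\xi^{\ast}$ contained in $X_{t^{\ast}}(\bar{x})$, so every point of $\NNN$ is reachable from $\bar{x}$ at time $t^{\ast}$ by some admissible control. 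Since $\xi^{\ast}=x^{(\bar{u},\bar{x})}(t^{\ast})\in\DAM\subset G_{-}$ and $G_{-}$ is open, shrinking $\NNN$ we may assume $\NNN\subset G_{-}$ and, along the nominal trajectory $x^{(\bar{u},\xi^{\ast})}(\cdot)$ (which is just $x^{(\bar{u},\bar{x})}(t^{\ast}+\cdot)$), the trajectory stays in $G_{-}$ for all $t$ up to $\bar{t}-t^{\ast}$ and reaches $G_{0}$ only at that time.

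Next I would use the variational/approximation machinery already invoked in the proofs of Propositions~\ref{boundary:prop} and \ref{ult-tan-1d-pr}: the family of perturbed trajectories $x^{(u_{\kappa,\ee},\bar{x}+\ee h)}$ converges uniformly to $x^{(\bar{u},\bar{x})}$, and their first-order variations $\Phi^{\bar{u}}(t^{\ast},0)h + \text{(needle term)}$ span, as $h$ ranges over an open set $\mathcal{O}$ and $\kappa$ over the needle parameters, a set whose image under the flow at time $t^{\ast}$ has nonempty interior — but this is precisely the statement that $\xi^{\ast}$ would be an interior point of $X_{t^{\ast}}(\bar{x})$ only if those variations span all of $\RR^{n}$. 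Since $\bar{x}\in\DAM$, for every $h\in\mathcal{O}$ with $\|h\|\leq H$ small, $\bar{x}+\ee h\in\AC$, so the perturbed trajectory from $\bar{x}+\ee h$ under $u_{\kappa,\ee}$ must cross $G_{0}$; but if $\xi^{\ast}\in\Int X_{t^{\ast}}(\bar{x})$ we can instead steer from $\bar{x}$ to some point $\zeta$ strictly inside $G_{-}$ near $\xi^{\ast}$ at time $t^{\ast}$, and then, since (by Corollary~\ref{bar-sem-cor} applied slightly off the boundary, or directly by (\ref{AAAopt})) points of $\Int(\AAA)$ near $\xi^{\ast}$ admit controls keeping $g\prec 0$ forever, concatenate to produce a control $v$ with $\sup_{t}\max_{i}g_{i}(x^{(v,\bar{x})}(t))<0$, whence $\bar{x}\in\Int(\AAA)$ — the desired contradiction.

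More carefully, the cleanest route is: $\xi^{\ast}\in\Int X_{t^{\ast}}(\bar{x})$ means there is $\delta>0$ with the ball $B(\xi^{\ast},\delta)\subset X_{t^{\ast}}(\bar{x})$. Because $\xi^{\ast}\in\cl(\Int(\AAA))$ and $\Int(\AAA)$ is relatively open, there is a point $\zeta\in\Int(\AAA)\cap B(\xi^{\ast},\delta)\cap G_{-}$; pick $v_{1}\in\UU$ steering $\bar{x}$ to $\zeta$ at time $t^{\ast}$ with the trajectory inside $G_{-}$ on $[0,t^{\ast}]$ (this last point needs the uniform closeness of the steering control to $\bar{u}$, or simply shrinking $\delta$ so that the whole reachable tube near $x^{(\bar{u},\bar{x})}|_{[0,t^{\ast}]}$ stays in $G_{-}$ — here I would invoke Lemma~\ref{compact-lem}/\ref{approx-lem} from the Appendix). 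By (\ref{AAAopt}) and $\zeta\in\Int(\AAA)$, there is $v_{2}\in\UU$ with $\sup_{s\geq 0}\max_{i}g_{i}(x^{(v_{2},\zeta)}(s))<0$. Then $v\triangleq v_{1}\Join_{t^{\ast}}v_{2}$ gives $\sup_{t\geq 0}\max_{i}g_{i}(x^{(v,\bar{x})}(t))<0$, so $\bar{x}\in\Int(\AAA)$ by (\ref{DAopt}), contradicting $\bar{x}\in\DAM$.

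The main obstacle I anticipate is the claim that the steering trajectory from $\bar{x}$ to $\zeta$ can be kept inside $G_{-}$ on $[0,t^{\ast}]$: interior-of-attainable-set only gives \emph{some} admissible control reaching $\zeta$, not one whose trajectory shadows the nominal $x^{(\bar{u},\bar{x})}|_{[0,t^{\ast}]}$ inside $G_{-}$. The fix is to localise: since $x^{(\bar{u},\bar{x})}(s)\in G_{-}$ for all $s\in[0,t^{\ast}]$ (a compact set bounded away from $G_{0}$), and by Lemma~\ref{approx-lem} the perturbed trajectories converge uniformly, the attainable points $\zeta$ close enough to $\xi^{\ast}$ are reached by controls producing trajectories uniformly close to the nominal one, hence still in $G_{-}$; alternatively, one replaces $t^{\ast}$ by a slightly smaller time and uses that $\DAM$ is flow-invariant (Proposition~\ref{boundary:prop}) so the argument is uniform in $t^{\ast}\in(0,\bar{t})$. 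Once this localisation is in hand, the concatenation argument is routine.
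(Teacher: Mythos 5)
Your overall plan (a contradiction argument producing an explicit concatenated control that forces $\bar{x}\in\Int(\AAA)$) can be made to work, and in substance it is an unrolled version of Corollary~\ref{bar-sem-cor}, which is exactly the ingredient the paper invokes; but the specific repair you offer for the one genuine difficulty is not valid. The difficulty, which you correctly identify, is that $\zeta\in B(\xi^{\ast},\delta)\subset X_{t^{\ast}}(\bar{x})$ only provides \emph{some} control $v_{1}$ steering $\bar{x}$ to $\zeta$ at time $t^{\ast}$, with no control over where its trajectory goes on $[0,t^{\ast}]$, whereas your concatenation needs $\max_{i}g_{i}<0$ along that whole arc. Your first fix claims that points of $X_{t^{\ast}}(\bar{x})$ close to $\xi^{\ast}$ are reached by controls whose trajectories are uniformly close to the nominal one; Lemma~\ref{approx-lem} says nothing of the sort (it concerns only the special needle variations of $\bar{u}$), and the claim is false in general: a point near $\xi^{\ast}$ may be attainable only along trajectories that stray far from $x^{(\bar{u},\bar{x})}$ on $[0,t^{\ast}]$, possibly leaving $G_{-}$. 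Your second, ``alternative'' fix (shrinking $t^{\ast}$ and invoking flow-invariance of $\DAM$) is too vague to close the gap as stated.

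The correct repair is simpler and uses the ``sufficiently small $t$'' clause of the statement: by the equicontinuity bound (\ref{equicont}) of Lemma~\ref{bound-lem} (equivalently, continuity of the attainable set/tube in $t$), there is $t_{1}>0$ such that \emph{every} trajectory issued from $\bar{x}$, under \emph{any} $u\in\UU$, remains in the open set $G_{-}$ on $[0,t_{1}]$; for $t^{\ast}\leq t_{1}$ the $v_{1}$-arc then automatically satisfies $\max_{i}g_{i}<0$ on the compact interval $[0,t^{\ast}]$, and your concatenation $v_{1}\Join_{t^{\ast}}v_{2}$ together with Proposition~\ref{min-sup-prop} yields the contradiction. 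This small-time containment is precisely how the paper proceeds, and it lets one avoid the contradiction and the interior-point analysis altogether: once $X_{t}(\bar{x})\subset G_{-}$ for small $t$, Corollary~\ref{bar-sem-cor} gives $X_{t}(\bar{x})\cap\Int(\AAA)=\emptyset$, hence $\cl(\Int(\AAA))\subset\cl\left(X_{t}(\bar{x})^{\mathsf C}\right)$, and since $x^{(\bar{u},\bar{x})}(t)\in\DAM\cap\cl(\Int(\AAA))$ also lies in $X_{t}(\bar{x})$, it lies in $X_{t}(\bar{x})\cap\cl\left(X_{t}(\bar{x})^{\mathsf C}\right)=\partial X_{t}(\bar{x})$, with no needle variations needed.
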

\begin{proof}
We first prove that $X_{t}(\bar{x}) \subset \cl (\AC)$ for all sufficiently small $t$. Since $\bar{x}\in G_{-}$, by the  continuity of the attainable set $X_{t}(\bar{x})$ with respect to $t$ (see \cite{Lee_Markus}) there exists a small enough time interval in which $X_{t}(\bar{x}) \subset G_{-}$. It follows from Corollary~\ref{bar-sem-cor} that $X_{t}(\bar{x}) \cap \Int(\AAA) = \emptyset$, thus, $X_{t}(\bar{x}) \subset \cl(\AC)$.

Thus, by complementarity, $\Int(\AAA)\subset X_{t}(\bar{x})^{\mathsf  C}$, and consequently $\cl(\Int(\AAA))\subset \cl(X_{t}(\bar{x})^{\mathsf  C})$.
Therefore
$x^{(\bar{u},\bar{x})}(t) \in \DAM\cap \cl(\Int(\AAA))  \subset \cl(X_{t}(\bar{x})^{\mathsf  C})$. Since, by definition,  we also have  $x^{(\bar{u},\bar{x})}(t)\in X_{t}(\bar{x})$, the conclusion follows from the fact that $\partial X_{t}(\bar{x})= X_{t}(\bar{x}) \cap \cl(X_{t}(\bar{x})^{\mathsf  C})$.
\end{proof}

The proof of Theorem~\ref{barrier:thm}, is based on the necessary condition for a point of the state space to belong to the boundary of an attainable set given in Theorem~\ref{extrem:thm} of Appendix~\ref{Append-B}  (\cite[Theorem 3, Chapter 4, p. 254]{Lee_Markus} or \cite[Theorem 12.1, p.164 and Theorem 12.4 p. 178]{Agrachev}. See also \cite{Clarke,Vinter}).

\paragraph{Proof of Theorem~\ref{barrier:thm}.}

Let $\bar{x}$ and $\bar{u}$ be as in Proposition~\ref{att-boundary:prop}. Denote $x_{1}=x^{(\bar{u},\bar{x})}(t_{1})$ and $x_{2} =x^{(\bar{u},x_{1})}(t_{2})$ for sufficiently small $t_{1}$ and $t_{2}$ such that $x_{1}\in \partial X_{t_{1}}(\bar{x})$ and $x_{2}\in \partial X_{t_{2}}(x_{1})$ as in Proposition~\ref{att-boundary:prop}.
By Theorem~\ref{extrem:thm}, there exists an absolutely continuous maximal solution $\eta^{\bar{u}}_{1}$ (resp. $\eta^{\bar{u}}_{2}$) satisfying Equations (\ref{adjoint-eta})--(\ref{barriercond-eta}) on the interval $[0,t_{1}]$ (resp. $[t_{1},t_{2}]$). By the homogeneity of Equation (\ref{adjoint-eta}), the end-point conditions can be chosen so that $\eta^{\bar{u}}_{1}(t_{1}) =\eta^{\bar{u}}_{2}(t_{1})$, thus achieving the same constant in (\ref{barriercond-eta}).
It follows that there exists $\eta^{\bar{u}}$ defined on $[0,t_{2}]$ such that $\eta^{\bar{u}}= \eta^{\bar{u}}_{1}$ on $[0, t_{1}]$ and $\eta^{\bar{u}}= \eta^{\bar{u}}_{2}$ on $[t_{1}, t_{2}]$. By the same argument, the solution $\eta^{\bar{u}}$ can be extended to any subinterval of $[0, \bar{t}]$.
Denoting $\lambda^{\bar{u}} = - \eta^{\bar{u}}$, it is immediately seen that $\lambda^{\bar{u}}$ satisfies (\ref{adjoint-eta}) and that the $\max$ operator in (\ref{barriercond-eta}) is replaced by $\min$. 

Moreover, the final condition of $\lambda^{\bar{u}}$ at $\bar{t}$ is given by Proposition~\ref{ult-tan-nd-pr}, hence
 (\ref{adjoint}), which implies that the constant of the right-hand side of (\ref{barriercond-eta}) is equal to 0.
We have thus proven (\ref{adjoint}) and (\ref{barriercond}), which achieves the proof of Theorem~\ref{barrier:thm}. \hspace*{\fill}\bBox

\section{Examples}\label{sec:Examples}
We present a number of examples to illustrate the use of the results of the paper.
\subsection{Linear spring}

Consider a system consisting of a mass and a spring, governed by the differential equation
$m\ddot{y}+b\dot{y}+ky=u$, where $m$ is the mass, $y$ the displacement, $b$ the (linear) friction coefficient, $k$ the
spring constant and $u$ the force applied to the mass. We consider a state constraint as in~\eqref{eq:state_const}
of the form $y(t) \leq \bar{x}_1$, where $\bar{x}_1$ is a constant position that must not be exceeded, and a constraint in the input as
in~\eqref{eq:input_constraint}, i.e., $|u(t)| \leq 1$. By introducing the variables $x_1 = y$, $x_2 = \dot{y}$,
the system and constraints can be written as:
$$\left( \begin{array}{c}\dot{x}_{1}\\\dot{x}_{2}\end{array}\right) =
\left( \begin{array}{cc}0&1\\-\frac{k}{m}&-\frac{b}{m}\end{array}\right)
\left( \begin{array}{c}x_{1}\\x_{2}\end{array}\right)
+ \left( \begin{array}{c}0\\\frac{1}{m}\end{array}\right) u, \quad |u| \leq  1, \quad x_1 - \bar{x}_1 \leq  0.
$$
From the ultimate tangentiality condition on $G_0$ [i.e., on $(x_1,x_2) = (\bar{x}_1,x_2)$, $x_2 \in \RR$], given 
by~\eqref{ult-tan-eq}, since $Dg(x)=(1,0)$, we have:
\begin{equation*}
0= \min_{|u|\leq 1} \left\{Dg(x) f(x,u)\right\}=\min_{|u|\leq 1} \left\{x_2\right\}=x_2.
\end{equation*}
Hence, the point $(\bar{x}_1,0)$ of $G_{0}$  is the endpoint of the trajectory
in $\cl(\DAM)$ that arrives tangentially to $G_{0}$ (see Propositions~\ref{boundary:prop} and~Ê\ref{ult-tan-1d-pr}).

The rest of the points belonging to the barrier $\DAM$ are given by equation~\eqref{barriercond},
$$\min_{|u|\leq 1} \left\{\lambda^T f(x,u)\right\}=0,$$ 
and satisfy equation~\eqref{adjoint}, that is,
\begin{equation}
\label{adjoint_example}
\dot{\lambda}=
      \left(
       \begin{array}{c}
         \dot{\lambda}_1 \\
         \dot{\lambda}_2 \\
       \end{array}
     \right)=  - \left( \frac{\partial f}{\partial x}(x,u)\right)^T\lambda =
     \left(
       \begin{array}{cc}
         0  & \frac{k}{m} \\
         -1 & \frac{b}{m} \\
       \end{array}
     \right)
     \left(
       \begin{array}{c}
         \lambda_1 \\
         \lambda_2 \\
       \end{array}
     \right), \;
    \left(
       \begin{array}{c}
         \lambda_1(\bar{t}) \\
         \lambda_2(\bar{t}) \\
       \end{array}
     \right)= 
     \left(
       \begin{array}{c}
         1 \\
         0 \\
       \end{array}
     \right),
\end{equation}
where $\bar{t}$ is the time of arrival at the endpoint $(\bar{x}_1,0)$. The trajectory $\lambda(t)$ of the adjoint system can thus be obtained
by backward integration of~\eqref{adjoint_example}. In addition, from equation~\eqref{barriercond} we have:
\begin{equation}\label{barrier_example}
    \min_{|u|\leq 1} \left\{\lambda_1x_2+\lambda_2(-\frac{k}{m}x_1-\frac{b}{m}x_2+\frac{1}{m}u)\right\}=0,
\end{equation}
from where we deduce that, on the barrier, $u=-\mathrm{sgn}(\lambda_2)$ and that the barrier points are described by:
\begin{equation}
\label{system_example}
    \left(
       \begin{array}{c}
         \dot{x}_1 \\
         \dot{x}_2 \\
       \end{array}
     \right)=
     \left(
       \begin{array}{cc}
         0  & 1 \\
         -\frac{k}{m} & -\frac{b}{m} \\
       \end{array}
     \right)
     \left(
       \begin{array}{c}
         x_1 \\
         x_2 \\
       \end{array}
     \right)-
     \left(
       \begin{array}{c}
         0 \\
         \frac{1}{m} \\
       \end{array}
     \right)\mathrm{sgn}(\lambda_2), \quad
     \left(
       \begin{array}{c}
         x_1(\bar{t}) \\
         x_2(\bar{t}) \\
       \end{array}
     \right)=
     \left(
       \begin{array}{c}
         \bar{x}_1 \\
         0 \\
       \end{array}
     \right).
\end{equation}
The trajectory $x(t)$ determining the barrier can thus be obtained
by backward integration of~\eqref{system_example}. The resulting trajectory that gives the barrier
of the admissible set $\AAA$ is shown in Figure~\ref{fig:Figure_example_1}
\begin{figure}[thpb]
\begin{center}
\includegraphics[width=.6\columnwidth]{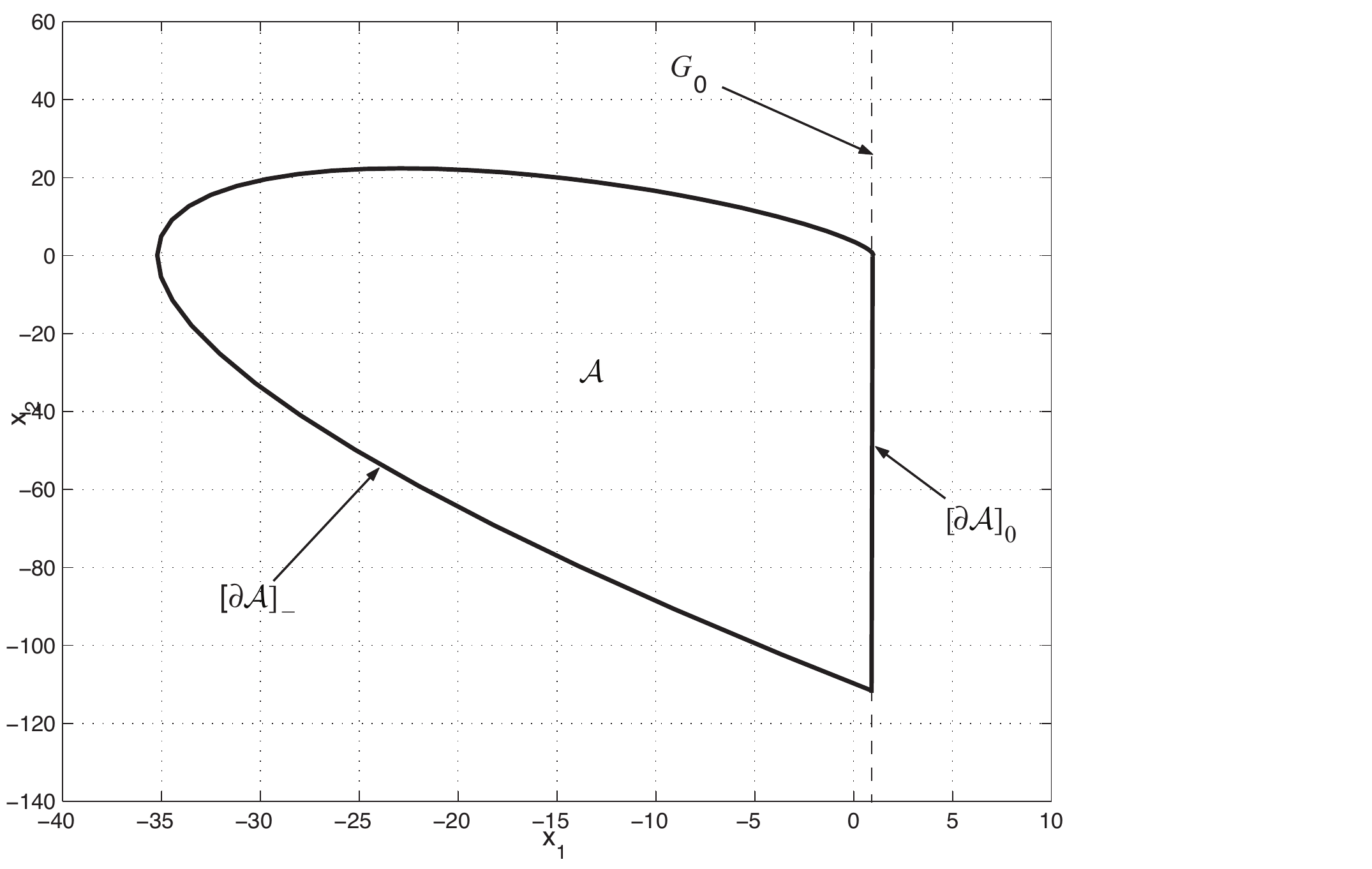}
\caption{Admissible set $\AAA$ and barrier for the linear mass-spring example}
\label{fig:Figure_example_1}
\end{center}
\end{figure}
for the numerical values $m=1$, $k=2$, $b=2$ and $\bar{x}_1=1$.

\subsection{Nonlinear spring}
We next consider a nonlinear version of the spring, in which the force exerted by the spring is given
by $k(x_1+x_1^3)$ instead of the linear force $kx_1$ and where, as before, $x_1=y=$ displacement (this situation is commonly referred to as
\emph{hardening spring}). From an analysis almost identical to the one performed previously for the linear spring, we conclude that the
barrier points can be obtained from the following system and adjoint equations:
\begin{equation}\label{nonlinear}
\left\{ \begin{array}{lcl}
   \dot{\lambda}_1 &=& \frac{k}{m}(1+3x_1^2)\lambda_2\\
   \dot{\lambda}_2 &=& -\lambda_1 + \frac{b}{m} \lambda_2
 \end{array}\right. , \quad
 \left\{ \begin{array}{lcl}
   \dot{x}_1 &=& x_2\\
   \dot{x}_2 &=& - \frac{k}{m}(x_1+x_1^3)-\frac{b}{m} x_2 -\frac{1}{m} \mathrm{sgn}(\lambda_2)
 \end{array}\right.
\end{equation}
with endpoint given by $(\lambda_1(\bar{t}),\lambda_2(\bar{t}),x_1(\bar{t}),x_2(\bar{t}))=(1,0,\bar{x}_1,0)$.
The trajectory determining the barrier can thus be obtained
by backward integration of system~\eqref{nonlinear} starting from the endpoint.
The resulting trajectory that gives the barrier
of the admissible set $\AAA$ is shown in Figure~\ref{fig:Figure_example_2}
\begin{figure}[thpb]
\begin{center}
\includegraphics[width=.6\columnwidth]{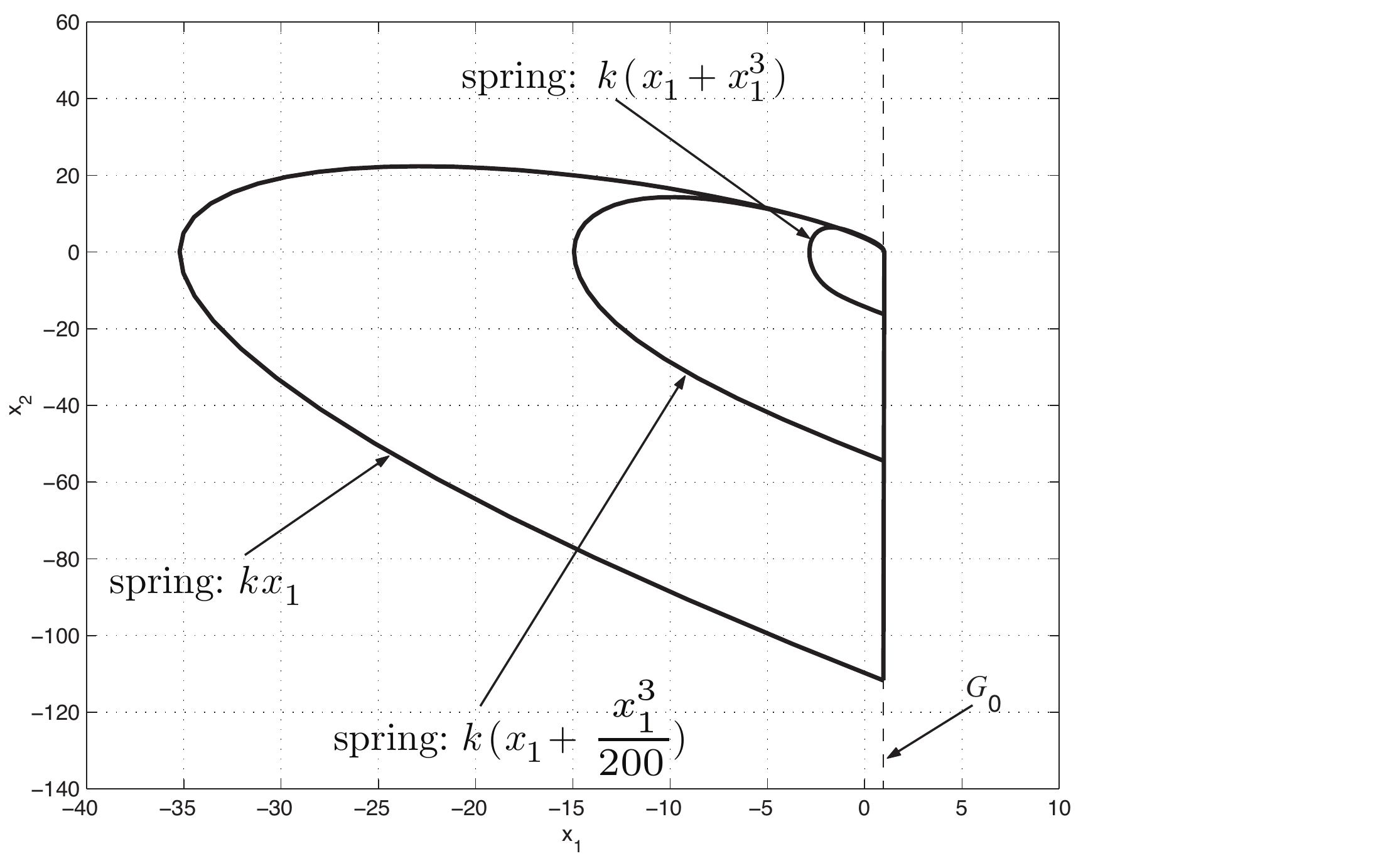}
\caption{Admissible sets $\AAA$ and barriers for the non-linear mass-spring examples}
\label{fig:Figure_example_2}
\end{center}
\end{figure}
for the same numerical values as before. For comparison purposes, also shown in the figure
are the cases of the linear spring $kx_1$ and an intermediate case $k(x_1+\frac{x_1^3}{200})$.
Note, from the figure, the considerable reduction in the size of the admissible set
due to the effect of the \emph{hardening spring}; namely, a hardening spring is able
to store more potential energy and, hence, it can surpass the position constraint if started
from initial conditions that would not cause constraint violations in the case of a linear spring.

\subsection{A nonlinear academic example}
We consider the 2-dimensional single input system:   
\begin{equation}\label{sys:ex}
\begin{array}{l}
\dot{x}_{1}= 1-x_{2}^{2} \\
\dot{x}_{2}=u
\end{array}
\end{equation}
with input constraint 
\begin{equation}\label{inputcons:ex}
\vert u \vert \leq 1
\end{equation}
and state constraint 
\begin{equation}\label{statecons:ex}
\underline{a} \leq x _{1} \leq \overline{a}, \quad \underline{a}, \overline{a}\in \RR, \quad \underline{a} < \overline{a}.
\end{equation}
Denoting as usual $x= (x_{1},x_{2})$, we have $G= \{ x \in \RR^{2}: \underline{a} \leq x _{1} \leq \overline{a}\}$. 
We set $\underline{g}(x) \triangleq  \underline{a} - x _{1}$ and $\overline{g}(x) \triangleq  x _{1}-\overline{a}$. Clearly, $G$ is given by the set of inequalities
$$\underline{g}(x) \leq 0, \qquad  \overline{g}(x) \leq 0.$$

The boundary $G_{0}$ of $G$ is thus made of the union of the sets $\underline{a} - x _{1} = 0$ and $x _{1}-\overline{a} = 0$. Since the latter sets define two disjoint 1-dimensional smooth manifolds (parallel straight lines), a normal vector to $G_{0}$  is given by $D\underline{g}(x)= (-1,0)$ if $x_{1} = \underline{a}$ and $D\overline{g}(x)= (1,0)$ if $x_{1} = \overline{a}$. We thus consider the two cases, where the active constraint is $x_{1} = \underline{a}$ or $x_{1} = \overline{a}$, separately.

\subsubsection{The case $x_{1} = \underline{a}$}
The ultimate tangentiality condition (\ref{ult-tan-eq-mult}) reads 
$$\min_{\vert u\vert \leq 1} \{-1(1-x_{2}^{2}(\underline{t})) +0\cdot u\}  = 0$$
where $\underline{t}$ is a time instant where an arc of the system integral curve contained in $\DAM$ intersects $\underline{G}_{0} \triangleq \{ x \in \RR^{2} : x_{1} = \underline{a}\}$. We readily get $x_{2}^{2}(\underline{t}) = 1$.
For convenience, we denote $x_{2}(\underline{t}) = \underline{x}_{2}$ and we have $\underline{x}_{2} = \pm 1$.

The Hamiltonian (\ref{Hamdef}) is $H(x,\lambda,u) \triangleq \lambda_{1}(1-x_{2}^{2}) + \lambda_{2}u$, where $\lambda = (\lambda_{1},\lambda_{2})$ is the adjoint state, and the adjoint equation is
$$\dot{\lambda}_{1} = 0, \qquad \dot{\lambda}_{2}= 2x_{2}\lambda_{1}$$
with final condition at time $\underline{t}$ 
$$\lambda_{1}(\underline{t})= -1, \qquad \lambda_{2}(\underline{t})= 0$$
The adjoint is thus given by $\lambda_{1}(t) \equiv -1$ for all $t \leq \underline{t}$ and 
$$\dot{\lambda}_{2}(t)= -2x_{2}(t), \qquad \lambda_{2}(\underline{t})= 0.$$
Moreover, we must have, according to (\ref{hamilton}),
$$\min_{\vert u \vert \leq 1} H(x(t),\lambda(t),u) \triangleq H(x(t),\lambda(t),\underline{u}(t)) = 0 , \quad a.e.~t \leq \underline{t}$$
or $\underline{u}(t)= - \sgn(\lambda_{2}(t))$ and $-1 + x_{2}^{2}(t) = \vert \lambda_{2}(t)\vert$. Since the r.h.s. of the latter expression is non negative, we deduce that either $x_{2}(t) \geq 1$ or $x_{2}(t) \leq -1$.

Setting $\underline{\lambda}_{2} \triangleq \sgn(\lambda_{2}(t))$ for $t$ in an interval $]t_{1},\underline{t}[$ to be determined,
the arc of curve of $\DAM$ arriving at $(\underline{a}, \underline{x}_{2})$ is given by
$$ \dot{x}_{1}(t)= 1 - x_{2}^{2}(t), \qquad \dot{x}_{2}(t) = - \underline{\lambda}_{2}$$
with 
$$x_{1}(\underline{t})= \underline{a}, \qquad x_{2}(\underline{t}) = \underline{x}_{2}$$
and with 
$$\dot{\lambda}_{2}(t) = - 2x_{2}(t), \qquad \lambda_{2}(\underline{t}) = 0.$$

We readily get:
\begin{equation}\label{x2solunder:ex2}
x_{2}(t) =  \underline{x}_{2} - \underline{\lambda}_{2}(t - \underline{t})
\end{equation}
and, since $\underline{x}_{2} = \pm 1$ and  $\underline{\lambda}_{2} = \pm 1$,
$1 - x_{2}^{2}(t) = 2  \underline{x}_{2} \underline{\lambda}_{2} (t - \underline{t}) - (t - \underline{t})^{2}$, and we have
\begin{equation}\label{x1solunder:ex2}
x_{1}(t) =  \underline{a} - \frac{1}{3} (t - \underline{t})^{3} +  \underline{x}_{2} \underline{\lambda}_{2} (t - \underline{t})^{2} =  \underline{a} - \frac{1}{3} (t - \underline{t})^{2} \left( t - \underline{t} - 3\underline{x}_{2} \underline{\lambda}_{2} \right)
\end{equation}
and, according to $\underline{\lambda}_{2}^{2} = 1$,
\begin{equation}\label{lambda2solunder:ex2}
\lambda_{2}(t) = -2 \underline{x}_{2}(t - \underline{t}) + \underline{\lambda}_{2} (t - \underline{t})^{2} =
 \underline{\lambda}_{2} (t - \underline{t}) \left( t - \underline{t} - 2\underline{x}_{2} \underline{\lambda}_{2} \right).
 \end{equation}
Since $x_{1}(t)$, given by (\ref{x1solunder:ex2}), has to satisfy the constraint $\underline{a} - x_{1}(t) \leq 0$, we have
$$\underline{a} - x_{1}(t) = \frac{1}{3} (t - \underline{t})^{2} \left( t - \underline{t} - 3\underline{x}_{2} \underline{\lambda}_{2} \right) \leq 0$$
$\forall t \in ]t_1,\underline{t}[$,
which, together with the fact that $\frac{1}{3} (t - \underline{t})^{2}  \geq 0$, yields
$t - \underline{t} - 3\underline{x}_{2} \underline{\lambda}_{2} \leq 0$, or
$$0 = \sup_{t_1 < t < \underline{t}} \frac{t - \underline{t}}{3} \leq \underline{x}_{2} \underline{\lambda}_{2}.$$
Thus, since $\underline{x}_{2} = \pm 1$ and  $\underline{\lambda}_{2} = \pm 1$, we get $\underline{x}_{2} \underline{\lambda}_{2} =  1$ and, 
$$\underline{x}_{2} = \underline{\lambda}_{2}.$$
Thus, (\ref{x2solunder:ex2}), (\ref{x1solunder:ex2}), (\ref{lambda2solunder:ex2}) read
\begin{equation}\label{solunder:ex2}
\begin{aligned}
x_{2}(t) &= - \underline{\lambda}_{2}(t - \underline{t} - 1)\\
x_{1}(t) &=  \underline{a} - \frac{1}{3} (t - \underline{t})^{2} \left( t - \underline{t} - 3 \right)\\
\lambda_{2}(t) &=  \underline{\lambda}_{2} (t - \underline{t}) \left( t - \underline{t} - 2 \right).
\end{aligned}
\end{equation}

Finally, eliminating $t - \underline{t}$ from the first equation of (\ref{solunder:ex2}), we get
$$t - \underline{t} = 1 - \frac{x_{2}}{\underline{\lambda}_{2}}$$
and
$$x_{1} =  \underline{a} + \frac{1}{3} \left(1 - \frac{x_{2}}{\underline{\lambda}_{2}}\right)^{2} \left( 2 + \frac{x_{2}}{\underline{\lambda}_{2}} \right).$$
Thus, if $\underline{\lambda}_{2}=1$ (which implies $\underline{u} = -1$):
\begin{equation}\label{branch1:ex2}
x_{1} =  \underline{a} + \frac{1}{3} \left( 1 - x_{2} \right)^{2} \left( 2 + x_{2} \right)
\end{equation}
an expression valid for $x_{2} \geq 1$ (since $t\leq \underline{t}$).

Now, if $\underline{\lambda}_{2}=-1$ (thus $\underline{u}=+1$):
\begin{equation}\label{branch2:ex2}
x_{1} =  \underline{a} + \frac{1}{3} \left( 1 + x_{2} \right)^{2} \left( 2 - x_{2} \right)
\end{equation}
an expression valid for $x_{2} \leq -1$ (since $t\leq \underline{t}$).

\subsubsection{The case $x_{1} = \overline{a}$}
The only difference with the previous case is that $\lambda_{1}(t)\equiv +1$ (the first component of the differential of $\overline{g}$ at $x_{1} = \overline{a}$) for $t\leq \overline{t}$,  where $\overline{t}$ is a time instant where an arc of the system integral curve contained in $\DAM$ intersects the line $x_{1}=\overline{a}$.
The ultimate tangentiality condition reads here $\min_{\vert u \vert \leq 1} \{1-\overline{x}_{2}^{2} + 0.u\} = 0$,
or $\overline{x}_{2} = \pm 1$. 

The Hamiltonian $H(x,\lambda,u) = \lambda_{1} (1 - x_{2}^{2}) + \lambda_{2} u$ is unchanged, as well as the argument of its minimum with respect to $u$, denoted by $\overline{u}(t)$, $\overline{u}(t) = - \sgn(\lambda_{2}(t))$. The adjoint equation, with  $\lambda_{1}(t)\equiv +1$, reads
$$\dot{\lambda}_{2}(t) =  2 x_{2}(t), \qquad \lambda_{2}(\overline{t}) = 0.$$

Setting $\overline{\lambda}_{2} \triangleq \sgn(\lambda_{2}(t))$ for $t$ in an interval $]t_{2},\overline{t}[$,
the integration of the system gives
$$\begin{aligned}
x_{2}(t) &= \overline{x}_{2} - \overline{\lambda}_{2} (t-\overline{t})\\
x_{1}(t) &= \overline{a} - \frac{1}{3} (t-\overline{t})^{2} \left( t-\overline{t} - 3\overline{x}_{2}\overline{\lambda}_{2} \right)\\
\lambda_{2}(t) &=- \overline{\lambda}_{2} (t-\overline{t}) \left(t-\overline{t} - 2\overline{x}_{2}\overline{\lambda}_{2} \right)
\end{aligned}$$

Since we must have $x_{1}(t) \leq  \overline{a}$, the expression of $x_{1}$ yields $t-\overline{t} - 3\overline{x}_{2}\overline{\lambda}_{2} \geq 0$, or $\overline{x}_{2}\overline{\lambda}_{2} \leq \inf_{t_ 2< t <\overline{t}} \frac{t-\overline{t} }{3} \leq  0$ and, thus, $\overline{x}_{2}\overline{\lambda}_{2} = -1$, i.e. $\overline{x}_{2} = -\overline{\lambda}_{2}$, and $\overline{t}-3 \leq t \leq \overline{t}$.

Thus the previous expressions of $x_{1}(t), x_{2}(t), \lambda_{2}(t)$ read:
$$\begin{aligned}
x_{2}(t) &=- \overline{\lambda}_{2} (t-\overline{t} + 1)\\
x_{1}(t) &= \overline{a} - \frac{1}{3} (t-\overline{t})^{2} \left( t-\overline{t} + 3 \right)\\
\lambda_{2}(t) &= -\overline{\lambda}_{2} (t-\overline{t}) \left(t-\overline{t} + 2 \right)
\end{aligned}$$
Elimination of $t-\overline{t}$ in the first equation yields $t-\overline{t} = -1 - \frac{x_{2}}{\overline{\lambda}_{2}}$, and
$$x_{1}(t) = \overline{a} - \frac{1}{3} \left( 1 + \frac{x_{2}}{\overline{\lambda}_{2}}\right)^{2} \left( 2 -\frac{x_{2}}{\overline{\lambda}_{2}} \right).$$

Thus, if $\overline{\lambda}_{2} = +1$ (which implies $\overline{u} = -1$), we get
\begin{equation}\label{branch3:ex2}
x_{1}(t) = \overline{a} - \frac{1}{3} \left( 1 + x_{2} \right)^{2} \left( 2 - x_{2} \right)
\end{equation}
and if $\overline{\lambda}_{2} = -1$ (which implies $\overline{u} = +1$), 
\begin{equation}\label{branch4:ex2}
x_{1}(t) = \overline{a} - \frac{1}{3} \left( 1 - x_{2} \right)^{2} \left( 2 + x_{2} \right).
\end{equation}

\begin{figure}[h]
$$\includegraphics[width=0.6\columnwidth]{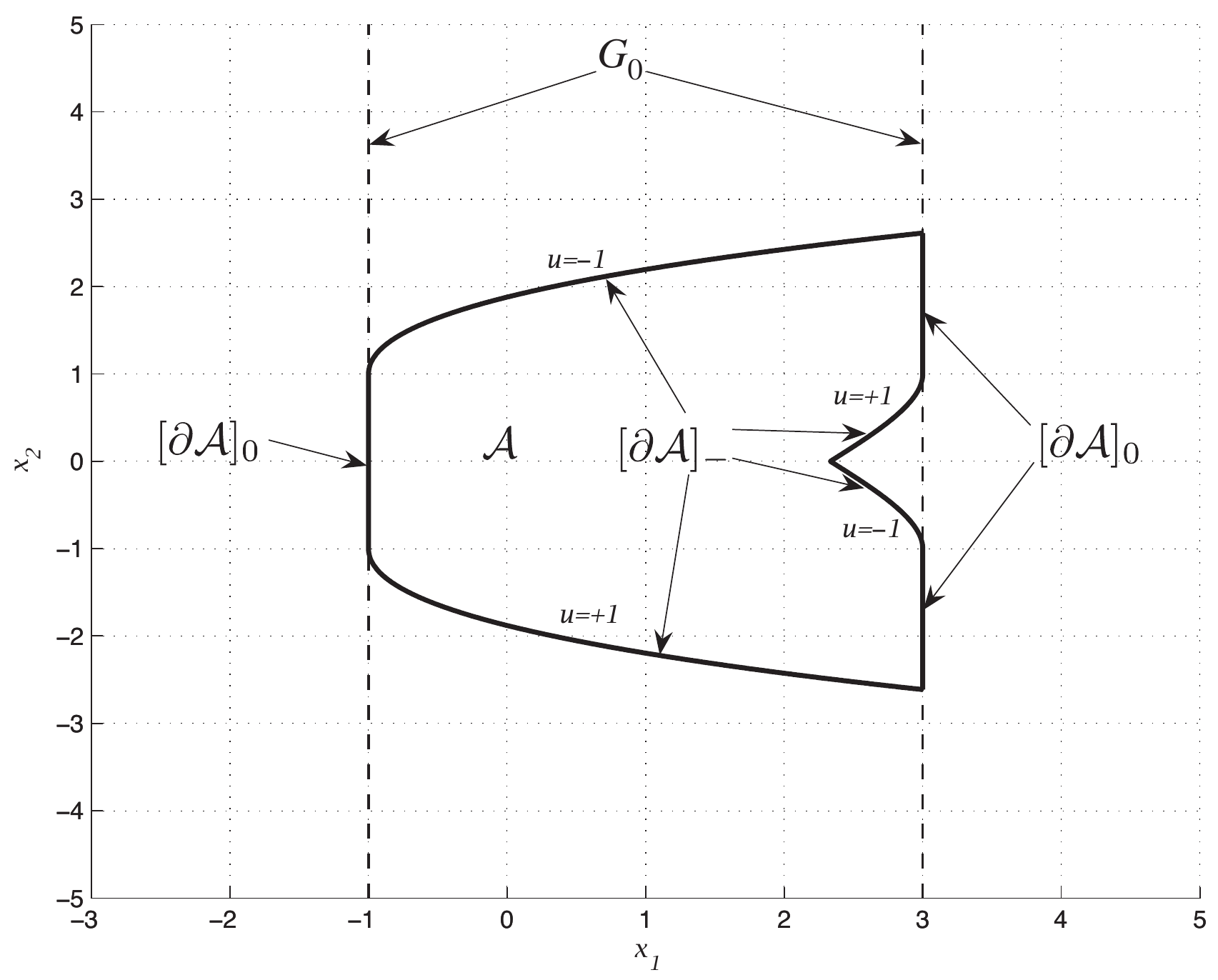}$$
\caption{Admissible set $\AAA$ and barrier for system~\eqref{sys:ex}--\eqref{statecons:ex} with  $\underline{a}=-1$ and $\overline{a}=3$ \label{barr-sing-fig}}
\end{figure}

Remark that the two arcs (\ref{branch3:ex2}) and (\ref{branch4:ex2}) cross at the point $(x_{1},x_{2})=(\overline{a}-\frac{2}{3},0)$. At this point, if $\underline{a} \leq \overline{a}-\frac{2}{3}$, two controls $\overline{u}= + 1$ or $\overline{u}= - 1$ are allowed in order to remain in the admissible set. This means that the barrier (made of the pair of corresponding arcs of integral curves from $(\overline{a}-\frac{2}{3},0)$ to $(\overline{a}, \pm1)$) is not differentiable at the point $(\overline{a}-\frac{2}{3},0)$ where the normals are orthogonal, $(\lambda_{1},\lambda_{2})=(1,\pm 1)$. 

Finally, it is readily seen that the admissible set $\AAA$ is the union of the four subsets:
$$
\begin{aligned}
\AAA =& \{ (x_{1},x_{2})\in \RR^{2}: \underline{a} + \frac{1}{3} \left( 1 + x_{2} \right)^{2} \left( 2 - x_{2} \right)  \leq x_{1} \leq \overline{a} ,  x_{2} \leq -1 \} 
\\
& \cup  
 \{ (x_{1},x_{2})\in \RR^{2}: \underline{a} \leq x_{1} \leq \overline{a} - \frac{1}{3} \left( 1 + x_{2} \right)^{2} \left( 2 - x_{2} \right) , -1 \leq x_{2}  \leq 0 \} 
 \\
 & \cup
 \{ (x_{1},x_{2})\in \RR^{2}: \underline{a} \leq x_{1} \leq \overline{a} - \frac{1}{3} \left( 1 - x_{2} \right)^{2} \left( 2 + x_{2} \right) , 0 \leq  x_{2} \leq 1 \} 
 \\
 & \cup
 \{ (x_{1},x_{2})\in \RR^{2}: \underline{a} + \frac{1}{3} \left( 1 - x_{2} \right)^{2} \left( 2 + x_{2} \right)  \leq x_{1} \leq \overline{a} ,  x_{2} \geq 1 \} 
 \end{aligned}
$$
as depicted in Figure~\ref{barr-sing-fig} for $\underline{a}=-1$ and $\overline{a}=3$.

Note that the admissible set $\AAA$ is not even a connected set when $\underline{a} \geq \overline{a}-\frac{2}{3}$.

\section{Conclusions}\label{sec:Conclus}

This paper has addressed the problem of state and input constrained control for nonlinear
systems with multidimensional constraints. In particular, the admissible region of the
state space, where the state and input constraints can be satisfied for all times, was studied.
A local description of the boundary of the admissible region of the state space was obtained.
This boundary is made of two disjoint parts: the subset of the state constraint boundary on 
which there are trajectories pointing towards the interior of the admissible set or tangentially 
to it; and a barrier, namely a semipermeable surface which is constructed via a minimum-like 
principle. A number of examples was provided to illustrate the results of the paper. A complete characterisation of the admissible region was obtained for these two-dimensional examples. While the theoretical necessary conditions derived in this paper hold for any dimension, higher dimensional systems may exhibit more complex geometric features and such examples will be the subject of future study.

\paragraph{Acknowledgement.}The authors wish to express their warm gratitude to Prof.\ Emmanuel Tr\'{e}lat for fruitful discussions.

\appendix

\section{Compactness of solutions}\label{Append-A}

We prove here the inequalities and compactness results used in Sections \ref{sec:AdmSetTopol} and \ref{sec:BoundAdmSet}. Many related results exist in the literature; see e.g. \cite[Theorem 5.2.1]{Trelat} for a result based on purely functional analytic arguments, and \cite[Chap. 9]{Cesari}, \cite[Section 4.2]{Lee_Markus}, \cite[Section 10.3]{Agrachev} for related results specifically oriented to the study of attainable sets and the existence of optimal controls, based on Filippov's theorem \cite{Filippov_siam}. Those results are scattered in several sources and embedded in slightly different contexts than the one of concern here, and are not always easily identifiable. Thus they are provided here for the sake of completeness and unification. 

We recall the following classical lemma:
\begin{lem}\label{bound-lem}
If assumptions (A1) and (A2) of Section~\ref{sec:ConsDynCon} hold true, equation (\ref{eq:state_space}) admits a unique absolutely continuous integral curve over $[t_0, +\infty)$ for every $u\in \UU$ and every bounded initial condition $x_{0}$, which remains bounded for all finite $t\geq t_0$,
\begin{equation}\label{bound}
\Vert x(t) \Vert \leq \left( (1+ \Vert x_{0} \Vert^{\alpha} )e^{\alpha C (t-t_{0})} -1 \right)^{\frac{1}{\alpha}} \triangleq K(\alpha, t)
\end{equation}
with $\alpha=1$ for condition (A2.i) and $\alpha=2$ for condition (A2.ii).

Moreover, we have
\begin{equation}\label{equicont}
\Vert x(t)-x(s) \Vert \leq C(\alpha)\vert t-s \vert
\end{equation}
for all $t, s \in [t_0,T]$ and all $T>t_0$, where
\begin{equation}\label{Calphabound}
C(\alpha) \triangleq
\sup_{\Vert x \Vert  \leq  K(\alpha, T), u\in U_{1}}  \Vert f(x,u)\Vert < + \infty
\end{equation}
with $\alpha = 1$ (resp. $\alpha= 2$) if condition (A2.i) (resp. (A2.ii)) holds.
\end{lem}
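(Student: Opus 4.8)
The plan is to run the classical Carathéodory--Grönwall argument in four stages: local existence and uniqueness, an a priori bound, global continuation, and the Lipschitz-in-time estimate. First I would establish local existence and uniqueness. By (A1), $f$ is $C^{2}$ in $x$ on an open set containing $U_{1}$, hence locally Lipschitz in $x$ uniformly for $u\in U_{1}$; since $u\in\UU$ is measurable and $f$ is continuous, the map $(t,x)\mapsto f(x,u(t))$ satisfies the Carathéodory conditions (measurable in $t$, continuous in $x$, and bounded on each compact ball $\{\Vert x\Vert\le R\}$ by a constant, using $\Vert u(t)\Vert\le 1$ and continuity of $f$ on the compact set $\{\Vert x\Vert\le R\}\times U_{1}$). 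The Carathéodory existence theorem, together with the local Lipschitz property for uniqueness, then yields for every bounded $x_{0}$ a unique absolutely continuous solution on a maximal interval $[t_{0},t_{0}+\tau)$ satisfying $x(t)=x_{0}+\int_{t_{0}}^{t}f(x(r),u(r))\,dr$.

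Next I would derive the bound \eqref{bound} directly from this integral equation. Under (A2.i), one has $\Vert x(t)\Vert\le\Vert x_{0}\Vert+\int_{t_{0}}^{t}C\big(1+\Vert x(r)\Vert\big)\,dr$, and the integral form of Grönwall's inequality applied to $1+\Vert x(t)\Vert$ gives $\Vert x(t)\Vert\le(1+\Vert x_{0}\Vert)e^{C(t-t_{0})}-1=K(1,t)$. Under (A2.ii), I would instead consider $w(t)\triangleq\Vert x(t)\Vert^{2}$, which is absolutely continuous with $\dot w(t)=2\,x(t)^{T}f(x(t),u(t))\le 2C\big(1+\Vert x(t)\Vert^{2}\big)$ for almost every $t$; Grönwall applied to $1+w$ then gives $\Vert x(t)\Vert^{2}\le(1+\Vert x_{0}\Vert^{2})e^{2C(t-t_{0})}-1$, that is $\Vert x(t)\Vert\le K(2,t)$. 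Working with $w=\Vert x\Vert^{2}$ in the second case, and with the integral (rather than differential) Grönwall inequality in the first, is how I would sidestep the non-differentiability of $t\mapsto\Vert x(t)\Vert$ at its zeros.

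The a priori bound then rules out finite-time blow-up: if the maximal interval were $[t_{0},t_{0}+\tau)$ with $\tau<\infty$, the solution would remain in the compact ball of radius $K(\alpha,t_{0}+\tau)$, so the standard continuation theorem would extend it beyond $t_{0}+\tau$, contradicting maximality; hence the solution is defined on all of $[t_{0},+\infty)$ and \eqref{bound} holds there. Finally, for \eqref{equicont}, I would fix $T>t_{0}$ and estimate, for $t_{0}\le s\le t\le T$, $\Vert x(t)-x(s)\Vert\le\int_{s}^{t}\Vert f(x(r),u(r))\Vert\,dr\le C(\alpha)\,|t-s|$, with $C(\alpha)$ the constant \eqref{Calphabound}; it is finite because $\{\Vert x\Vert\le K(\alpha,T)\}\times U_{1}$ is compact and $f$ is continuous on a neighbourhood of it by (A1), so the supremum is attained.

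The argument is essentially routine, and I do not expect a genuine obstacle. The one point that requires care is the Grönwall step for merely absolutely continuous solutions, handled as indicated by passing to the integral inequality in case (A2.i) and to $\Vert x\Vert^{2}$ in case (A2.ii); a secondary point worth spelling out is the verification of the Carathéodory/integrability hypotheses, which is precisely what makes the existence--uniqueness theorem applicable to measurable controls $u\in\UU$ rather than continuous ones.
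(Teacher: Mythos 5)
Your proposal is correct and follows essentially the same route as the paper: the bound \eqref{bound} via Gr\"onwall applied to $1+\Vert x\Vert$ in case (A2.i) and to $1+\Vert x\Vert^{2}$ in case (A2.ii), and \eqref{equicont} from the integral representation together with the compactness/continuity bound \eqref{Calphabound}. The only difference is that you spell out the Carath\'eodory existence--uniqueness and continuation steps, which the paper treats as classical and leaves implicit.
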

\begin{proof}

In the case (A2.i), using the integral representation of (\ref{eq:state_space}), we get
$$
\begin{aligned}
\Vert x(t)\Vert - \Vert x_0\Vert &\leq \int_{t_{0}}^{t} \Vert f(x(\tau),u(\tau))\Vert d\tau \\
&\leq \int_{t_{0}}^{t}  C(1+ \Vert x(\tau)\Vert) d\tau  \quad \mbox{\textrm{according to (A2.i)}}.
\end{aligned}
$$
Thus,
$$ (1+\Vert x(t)\Vert) \leq (1 + \Vert x_0\Vert) + C \int_{t_{0}}^{t}  (1+ \Vert x(\tau)\Vert) d\tau$$
and, by Gr\"{o}nwall's Lemma \cite{Gron},
$$(1+\Vert x(t)\Vert) \leq (1 + \Vert x_0\Vert) e^{C(t-t_{0})}$$
which readily yields (\ref{bound}) with $\alpha =1$.

In the case (A2.ii), multiplying both sides of (\ref{eq:state_space}) by $x(t)^T$, we get
$$\frac{1}{2} \frac{d}{dt} \left(\Vert x(t)\Vert^2 \right) = x(t)^T f(x(t),u(t))
$$
or, in integral representation, and taking absolute values,
$$\begin{aligned}
\left | \Vert x(t)\Vert^2 - \Vert x_0\Vert^2 \right | &= 2 \left| \int_{t_{0}}^{t}  x(\tau)^T f(x(\tau),u(\tau))d\tau \right| \\
&\leq 2C \int_{t_{0}}^{t} (1+ \Vert x(\tau)\Vert^2) d\tau \quad \mbox{\textrm{according to (A2.ii)}}
\end{aligned}
$$
or
$$ \left(1+\Vert x(t)\Vert^2\right) \leq \left(1 + \Vert x_0\Vert^2\right) +2 C \int_{t_{0}}^{t}  \left(1+ \Vert x(\tau)\Vert^2\right) d\tau.
$$
As before, by Gr\"{o}nwall's Lemma, we get
$$\left(1+\Vert x(t)\Vert^2\right) \leq \left(1 + \Vert x_0\Vert^2 \right) e^{2C(t-t_{0})}$$
which readily yields (\ref{bound}) with $\alpha =2$.

To prove Inequality (\ref{equicont}), let us recall that, for every pair $t, s \in [t_0,T]$ and all $T>t_0$,
$x(t)-x(s) = \int_{s}^{t} f(x(\tau),u(\tau))d\tau$. The continuity of $f$ implies that $C(\alpha) < +\infty$ with $C(\alpha)$ defined by (\ref{Calphabound}). We immediately deduce (\ref{equicont}).
\end{proof}

In the following results we will, without loss of generality
(due to time-invariance), replace the initial time $t_0$ by $0$.
\begin{cor}\label{relatcomp-lem}
Let us denote by $\XX(x_{0})$ the set of integral curves issued from an arbitrary $x_{0}$, $\Vert x_{0}\Vert < \infty$, and satisfying (\ref{eq:state_space}), (\ref{eq:initial_condition}), (\ref{eq:input_constraint}).

If assumptions (A1) and (A2) of Section~\ref{sec:ConsDynCon} hold true, $\XX(x_{0})$ is a subset of  $C^{0}([0,\infty), \RR^{n})$, the space of continuous functions from $[0,\infty)$ to $\RR^{n}$, and is relatively compact with respect to the topology of uniform convergence on $C^{0}([0,T], \RR^{n})$ for all finite $T\geq 0$. In other words, from any sequence of integral curves in $\XX(x_{0})$, one can extract a subsequence whose convergence is uniform on every interval $[0,T]$, with $T\geq 0$ and finite, and whose limit belongs to $C^{0}([0,\infty), \RR^{n})$.
\end{cor}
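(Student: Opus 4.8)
The plan is to apply the Arzelà--Ascoli theorem on each compact interval $[0,T]$ and then combine these local extractions by a diagonal argument over an exhausting sequence of intervals. Since the topology of uniform convergence on compact subsets of $[0,\infty)$ makes $C^{0}([0,\infty),\RR^{n})$ a metrizable space, relative compactness of $\XX(x_{0})$ in this topology is equivalent to the property that every sequence of integral curves in $\XX(x_{0})$ possesses a subsequence converging uniformly on every bounded interval $[0,T]$ to some element of $C^{0}([0,\infty),\RR^{n})$; this is what I would establish.

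First I would fix an arbitrary sequence $\{x^{(u_{k},x_{0})}\}_{k\in\NN}\subset\XX(x_{0})$ and a finite $T\geq 0$. By the bound (\ref{bound}) of Lemma~\ref{bound-lem}, each curve satisfies $\Vert x^{(u_{k},x_{0})}(t)\Vert\leq K(\alpha,T)$ for all $t\in[0,T]$, so the family is uniformly bounded on $[0,T]$; by (\ref{equicont}) of the same lemma, each curve is Lipschitz on $[0,T]$ with the common constant $C(\alpha)$ of (\ref{Calphabound}) (evaluated with upper bound $T$), so the family is equicontinuous on $[0,T]$. The Arzelà--Ascoli theorem then provides a subsequence converging uniformly on $[0,T]$ to a function continuous on $[0,T]$.

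Next I would iterate this on the intervals $[0,j]$, $j=1,2,3,\dots$: from the original sequence extract a subsequence $S_{1}$ converging uniformly on $[0,1]$; from $S_{1}$ extract $S_{2}$ converging uniformly on $[0,2]$; and in general from $S_{j-1}$ extract $S_{j}$ converging uniformly on $[0,j]$. The diagonal sequence, whose $k$-th term is the $k$-th member of $S_{k}$, is, from its $j$-th term onwards, a subsequence of $S_{j}$, hence converges uniformly on $[0,j]$; since every finite $T$ lies in some $[0,j]$, this diagonal sequence converges uniformly on every bounded interval. Its limit is then well defined on $[0,\infty)$ and, being a uniform limit of continuous functions on each $[0,j]$, belongs to $C^{0}([0,\infty),\RR^{n})$. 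This proves the relative compactness asserted.

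I do not foresee a real obstacle: the only mild care needed is that the constants $K(\alpha,T)$ and $C(\alpha)$ of Lemma~\ref{bound-lem} depend on the interval length, which is harmless because the Arzelà--Ascoli step is carried out one interval at a time, and that the diagonal sequence genuinely inherits uniform convergence on every compact interval. Note that the present corollary asserts only that the limit is continuous; the stronger statement that the limit is itself the integral curve $x^{(\bar{u},x_{0})}$ of some admissible $\bar{u}\in\UU$ --- which calls upon the convexity assumption (A3) together with a Filippov-type measurable selection --- is the content of Lemma~\ref{compact-lem} and is not required here.
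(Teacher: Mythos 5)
Your proposal is correct and follows essentially the same route as the paper: equiboundedness from (\ref{bound}) and equicontinuity from (\ref{equicont}) of Lemma~\ref{bound-lem}, followed by Ascoli--Arzel\`{a} on compact intervals. The only difference is that you spell out the diagonal extraction over the intervals $[0,j]$, which the paper leaves implicit in its appeal to relative compactness for the topology of uniform convergence on every $[0,T]$.
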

\begin{proof}
In Lemma~\ref{bound-lem}, inequality (\ref{bound}) means that the restriction of the integral curves of $\XX(x_{0})$ to any finite interval $[0,T]$ is equibounded, and (\ref{equicont}) shows that the same restriction to any finite interval $[0,T]$ of the integral curves of $\XX(x_{0})$ is an equicontinuous set with respect to the topology of uniform convergence on $C^{0}([0,T], \RR^{n})$, for all $T\geq 0$.
The relative compactness results from Ascoli-Arzel\`{a}'s theorem (see e.g. \cite[Chap. III, \S 3, p. 85]{yosida}).
\end{proof}

We now prove the following:
\begin{lem}\label{compact-lem}
Assume that (A1), (A2) and (A3) of Section~\ref{sec:ConsDynCon} hold. Given a compact set $\XX_{0}$ of $\RR^{n}$, the set $\XX\triangleq \bigcup_{x_{0}\in \XX_{0}}\XX(x_{0})$ is compact with respect to the topology of uniform convergence on $C^{0}([0,T], \RR^{n})$ for all $T\geq 0$, namely from every sequence $\{x^{(u_{k},x_{k})}\}_{k\in \NN} \subset \XX$ one can extract a  uniformly convergent subsequence on every finite interval $[0,T]$, whose limit $\xi$ is an absolutely continuous integral curve on $[0,\infty)$, belonging to $\XX$. In other words, there exists $\bar{x}\in \XX_{0}$ and $\bar{u}\in \UU$ such that $\xi(t)= x^{(\bar{u},\bar{x})}(t)$ for almost all $t \geq 0$.
\end{lem}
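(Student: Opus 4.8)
The plan is to deduce the statement from the a~priori bounds of Lemma~\ref{bound-lem} and Corollary~\ref{relatcomp-lem}, combined with a Filippov-type closure argument in which the convexity assumption~(A3) plays the decisive role. First, since $\XX_{0}$ is compact, $R\triangleq\sup_{x_{0}\in\XX_{0}}\Vert x_{0}\Vert$ is finite, so the bounds (\ref{bound}) and (\ref{equicont}) hold on each interval $[0,T]$ with constants $K(\alpha,T)$ and $C(\alpha)$ depending only on $R$ and $T$, uniformly over all curves in $\XX$. Thus the restriction of $\XX$ to $[0,T]$ is equibounded and equicontinuous, and Ascoli--Arzel\`a yields relative compactness in $C^{0}([0,T],\RR^{n})$ for every finite $T$. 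Given a sequence $\{x^{(u_{k},x_{k})}\}_{k\in\NN}\subset\XX$, a diagonal extraction over $T=1,2,\ldots$ then produces a subsequence, still indexed by $k$, converging uniformly on every $[0,T]$ to some $\xi\in C^{0}([0,\infty),\RR^{n})$; passing to a further subsequence, $x_{k}\to\bar{x}\in\XX_{0}$, whence $\xi(0)=\bar{x}$.

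Next I would identify $\xi$ as an integral curve of (\ref{eq:state_space}). The derivatives $\dot{x}^{(u_{k},x_{k})}(t)=f(x^{(u_{k},x_{k})}(t),u_{k}(t))$ are bounded in $L^{\infty}([0,T],\RR^{n})$ uniformly in $k$ by (\ref{Calphabound}), so after one more diagonal extraction $\dot{x}^{(u_{k},x_{k})}\rightharpoonup v$ weakly in $L^{2}([0,T],\RR^{n})$ for every $T$; passing to the limit in $x^{(u_{k},x_{k})}(t)=x_{k}+\int_{0}^{t}\dot{x}^{(u_{k},x_{k})}(s)\,ds$ shows that $\xi$ is absolutely continuous with $\dot\xi=v$. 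It remains to prove $\dot\xi(t)\in f(\xi(t),U_{1})$ for almost all $t$. Here the set-valued map $F(x)\triangleq f(x,U_{1})$ is compact-valued and upper semicontinuous by~(A1) and convex-valued by~(A3); since $\dot{x}^{(u_{k},x_{k})}(t)\in F(x^{(u_{k},x_{k})}(t))$, $x^{(u_{k},x_{k})}\to\xi$ uniformly, and $\dot{x}^{(u_{k},x_{k})}\rightharpoonup\dot\xi$ weakly, the classical closure theorem for differential inclusions (see e.g.\ \cite[Chap.~9]{Cesari}; concretely, apply Mazur's lemma to obtain convex combinations of the $\dot{x}^{(u_{k},x_{k})}$ converging strongly, hence a.e.\ along a subsequence, and use the convexity and upper semicontinuity of $F$) gives $\dot\xi(t)\in f(\xi(t),U_{1})$ a.e. Finally, Filippov's measurable selection lemma applied to the continuous map $(t,w)\mapsto f(\xi(t),w)-\dot\xi(t)$ on $[0,\infty)\times U_{1}$ furnishes a Lebesgue measurable $\bar{u}:[0,\infty)\to U_{1}$, i.e.\ $\bar{u}\in\UU$, with $\dot\xi(t)=f(\xi(t),\bar{u}(t))$ a.e.; hence $\xi=x^{(\bar{u},\bar{x})}\in\XX$. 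Since every sequence in $\XX$ thus has a subsequence converging, uniformly on compacts, to an element of $\XX$, the set $\XX$ is compact.

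The main obstacle is the identification step: the weak $L^{2}$ limit of the velocities is a~priori only a selection of the convexified orientor field $\overline{\co}\,f(\xi(t),U_{1})$, so without~(A3) the limit $\xi$ would in general solve only the relaxed differential inclusion and need not belong to $\XX$. Convexity of the vectogram is precisely what lets the closure theorem conclude $\dot\xi(t)\in f(\xi(t),U_{1})$; everything else — the Ascoli--Arzel\`a compactness, the diagonal extractions (whose mutual compatibility is handled by the standard procedure over $T=1,2,\ldots$), and the measurable selection — is routine.
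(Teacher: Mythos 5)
Your argument is correct, and its skeleton is the same as the paper's: uniform bounds from Lemma~\ref{bound-lem} extended over the compact $\XX_{0}$, Ascoli--Arzel\`a plus a diagonal extraction to get uniform convergence on compacts, weak $L^{2}$ convergence of the velocities, and then convexity of the vectogram (A3) via Mazur to identify the limit velocity. Where you genuinely diverge is in the endgame: you invoke the standard closure theorem for differential inclusions (upper semicontinuous, compact convex values) to get $\dot\xi(t)\in f(\xi(t),U_{1})$ a.e., and then recover the control in one stroke by Filippov's measurable selection lemma applied to $(t,w)\mapsto f(\xi(t),w)-\dot\xi(t)$. The paper instead carries out the closure argument by hand --- choosing a subsequence so that $f(x^{(u_{i_j},x_{i_j})}(\cdot),u_{i_j}(\cdot))$ is uniformly close to $f(\xi(\cdot),u_{i_j}(\cdot))$, estimating the Mazur convex combinations via Minkowski's inequality, realizing those combinations as admissible controls $v_{k}$ through the Castaing--Valadier measurable selection theorem, and finally extracting $\bar{u}$ as a pointwise limit of the $v_{k}$. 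Your packaging is shorter and, at the last step, arguably cleaner: passing from a.e.\ convergence of $f(\xi(t),v_{k}(t))$ to convergence of the controls themselves (as the paper does) implicitly needs something like injectivity of $f(\xi(t),\cdot)$ or a selection argument anyway, and Filippov's lemma is exactly the right tool there; the price is that you lean on two citable black boxes (the closure theorem and Filippov's lemma) where the paper aims to be self-contained, with your sketch of the closure proof (Mazur on tails, a.e.\ convergence along a subsequence, $\varepsilon$-neighbourhoods of the convex compact $f(\xi(t),U_{1})$) being the standard filling of that box.
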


\begin{proof}
Since $\XX_{0}$ is compact, it is immediate to extend inequalities (\ref{bound}) and (\ref{equicont}) to integral curves with arbitrary $x_{0}\in \XX_{0}$ by taking, in the right-hand side of (\ref{bound}), the supremum over all $x_{0}\in \XX_{0}$. Thus,
by the same argument as in the proof of Corollary~\ref{relatcomp-lem}, using Ascoli-Arzel\`{a}'s theorem, we conclude that $\XX$ is relatively compact with respect to the topology of uniform convergence on $C^{0}([0,T], \RR^{n})$, for all $T\geq 0$. It remains to prove that from every sequence $\{x^{(u_{k},x_{k})}\}_{k\in \NN} \subset \XX$ one can extract a  uniformly convergent subsequence on every finite interval $[0,T]$ whose limit $\xi$ belongs to $\XX$.

To this aim, we first remark that, from the fact that $\XX$ is
relatively compact we have that the limit, $\xi$,
is a continuous function on $[0,T]$ for all $T\geq 0$, and that, for every finite $T$ and every $t\in [0,T]$,
\begin{equation}\label{limX-eq}
\xi(t)= \lim_{k\rightarrow \infty} x_{k} + \lim_{k\rightarrow \infty} \int_{0}^{t} f(x^{(u_{k},x_{k})}(s),u_{k}(s))ds = \bar{x} + \lim_{k\rightarrow \infty} \int_{0}^{t} F_{k}(s)ds
\end{equation}
where the limit is taken over a subsequence and with the notations $\bar{x}=\lim_{k\rightarrow \infty} x_{k}$ and $F_{k}(t)= f(x^{(u_{k},x_{k})}(t),u_{k}(t))$.

We denote by $<v,w>=\sum_{i=1}^{n} v_{i}w_{i}$ the scalar product of the vectors $v$ and $w$ in $\RR^{n}$. Since, for every $k$, the integral curve $x^{(u_{k},x_{k})}$ satisfies
$\dot{x}^{(u_{k},x_{k})}(t)=F_{k}(t)$ for almost every $t$, taking the scalar product of both sides by a function $\varphi \in C^{\infty}([0,\infty), \RR^{n})$ and integrating from $0$ to $T$, yields
$$\int_{0}^{T} <\varphi(t),\dot{x}^{(u_{k},x_{k})}(t)>dt= \int_{0}^{T} <\varphi(t),F_{k}(t)>dt
$$
or, after integration by parts:
$$\begin{aligned}
- \int_{0}^{T} <\dot{\varphi}(t),x^{(u_{k},x_{k})}(t)>dt &+ <\varphi(T),x^{(u_{k},x_{k})}(T)> - <\varphi(0),x^{(u_{k},x_{k})}(0)> \\
&= \int_{0}^{T} <\varphi(t),F_{k}(t)>dt
\end{aligned}
$$
Taking the limits of both sides, according to the uniform boundedness of the integrals, we get, with $\dot{\xi}$ defined as a distribution on $[0,T]$:
\begin{equation}\label{distrib-eq}
\begin{aligned}
\int_{0}^{T} <\varphi(t), \dot{\xi}(t)>dt &\triangleq
- \int_{0}^{T} <\dot{\varphi}(t),\xi(t)>dt + <\varphi(T),\xi(T)> - <\varphi(0),\bar{x}> \\
&= \lim_{k\rightarrow \infty} \int_{0}^{T} <\varphi(t),F_{k}(t)>dt.
\end{aligned}
\end{equation}
In other words, $\dot{\xi}=  \lim_{k\rightarrow \infty}F_{k}$ in the sense of distributions. Moreover, for every $T>0$, restricting $\varphi$ to  $C_{K}^{\infty}([0,T], \RR^{n})$ (the set of infinitely differentiable functions from $[0,T]$ to $\RR^{n}$ with compact support, which is indeed contained in $C^{\infty}([0,\infty), \RR^{n})$)  and using the density of $C_{K}^{\infty}([0,T],\RR^{n})$ in $L^{2}([0,T],\RR^{n})$ (see e.g. \cite{schwartz}), equation (\ref{distrib-eq}) also implies that the sequence $F_{k}$ is weakly convergent in $L^{2}([0,T],\RR^{n})$. Let us denote by $\bar{F}_{T}$ its weak limit in $L^{2}([0,T],\RR^{n})$. We have therefore constructed a collection $\{ \bar{F}_{T}\}_{T>0}$ of weak limits, which uniquely defines a function $\bar{F}$ almost everywhere on the whole interval $[0,\infty)$, whose restriction to any interval $[0,T]$ coincides a.e.\ with $\bar{F}_{T}$, i.e.\ $\bar{F}_{\big| [0,T]} = \bar{F}_{T}$ a.e.. Indeed, taking any pair of intervals $[0,T_1]$ and $[0,T_2]$ with $T_1 \leq T_2$, by the uniqueness of the limits $\bar{F}_{T_{1}}$ and $\bar{F}_{T_{2}}$, the restriction of $\bar{F}_{T_{2}}$ to the interval $[0,T_1]$ coincides almost everywhere with $\bar{F}_{T_{1}}$ and it is readily seen that non uniqueness of $\bar{F}$ would contradict the uniqueness of every $\bar{F}_{T}$.

By Mazur's Theorem (see e.g. \cite[Chapter V, \S1, Theorem 2, p. 120]{yosida}), for every $k$, there exists a sequence $\{\alpha_1^{k}, \ldots, \alpha_{k}^{k}\}$ of non negative real numbers, with $\sum_{i=1}^{k} \alpha_{i}^{k} = 1$, such that the sequence $\tilde{F}_{k}= \sum_{i=1}^{k} \alpha_{i}^{k}F_{i}$ is strongly convergent to $\bar{F}$ in every $L^{2}([0,T],\RR^{n})$ for all finite $T$.
Note that this property \emph{a fortiori} holds true if we replace the sequence $F_{i}$ by any subsequence $F_{i_{j}}$ constructed by selecting a subsequence of indices $i_{j}$ such that, given $\ee >0$, $\sup_{t\in [0,T]} \Vert f(x^{(u_{i_{j}},x_{i_{j}})}(t), u_{i_{j}}(t)) - f(\xi(t), u_{i_{j}}(t))\Vert  < \ee 2^{-j}$ for each $j$, which is indeed possible thanks to the uniform convergence of $x^{(u_{k},x_{k})}$ to $\xi$ and the continuity of $f$.  Note also that the limit $\bar{F}$ remains the same (for convenience of notation, we keep the same symbols for the $\alpha_{j}^{k}$'s, but we remark that these coefficients have to be adapted relative to the new subsequence).

By Minkowski's inequality, we have:
\begin{equation}\label{minko-ineq}
\begin{array}{l}
\ds \left( \int_{0}^{T} \Vert \sum_{j=1}^{k} \alpha_{j}^{k} f(\xi(t),u_{i_{j}}(t)) - \bar{F}(t) \Vert^{2} dt \right)^{\frac{1}{2}}
\\
\ds \leq  \left( \int_{0}^{T} \Vert \sum_{j=1}^{k} \alpha_{j}^{k}\left( f(\xi(t),u_{i_{j}}(t)) - f(x^{(u_{i_{j}},x_{i_{j}})}(t),u_{i_{j}}(t)) \right) \Vert^{2} dt \right)^{\frac{1}{2}}\\
\ds \hspace{2cm}+  \left( \int_{0}^{T} \Vert \sum_{j=1}^{k} \alpha_{j}^{k}f(x^{(u_{i_{j}},x_{i_{j}})}(t),u_{i_{j}}(t)) -\bar{F}(t) \Vert^{2} dt \right)^{\frac{1}{2}}.
\end{array}
\end{equation}
We now prove that the limits of the two terms on the right-hand side of expression~\eqref{minko-ineq}, as $k$ tends to infinity, exist and are equal to 0. The convergence of the second limit to 0 is clearly an immediate consequence of the strong convergence of $\sum_{j=1}^{k} \alpha_{j}^{k}F_{i_{j}}$ to $\bar{F}$.

For the first term on the right-hand side of~\eqref{minko-ineq},
according to the construction of the above subsequence, and using the fact that $\alpha_{j}^{k} \in [0,1]$ for every $j$, we have
\begin{equation}\label{series-ineq}
\begin{array}{l}
\Vert \sum_{j=1}^{k} \alpha_{j}^{k}\left( f(\xi(t),u_{i_{j}}(t)) - f(x^{(u_{i_{j}},x_{i_{j}})}(t),u_{i_{j}}(t)) \right) \Vert \\
\ds \hspace{4cm}  \leq
\sum_{j=1}^{k} \alpha_{j}^{k} \Vert  f(\xi(t),u_{i_{j}}(t)) - f(x^{(u_{i_{j}},x_{i_{j}})}(t),u_{i_{j}}(t)) \Vert \\
\ds \hspace{4cm}  <
\ee \sum_{j=1}^{k} \alpha_{j}^{k} 2^{-j} \\
\ds \hspace{4cm}\leq \ee \left(  \max_{1\leq j\leq k} \alpha_{j}^{k} \right) \sum_{j=1}^{k}  2^{-j} \leq  \ee (1-2^{-k}).
\end{array}
\end{equation}
Thus
\begin{equation}\label{series-comb-ineq}
\begin{array}{l}
\ds \left( \int_{0}^{T} \Vert \sum_{j=1}^{k} \alpha_{j}^{k}\left( f(\xi(t),u_{i_{j}}(t)) - f(x^{(u_{i_{j}},x_{i_{j}})}(t),u_{i_{j}}(t)) \right) \Vert^{2} dt \right)^{\frac{1}{2}} \\
\ds \hspace{4cm} < \left( T \left(  \ee (1-2^{-k})\right)^2 \right)^{\frac{1}{2}} < \ee  \sqrt{T},
\end{array}
\end{equation}
hence, since $\ee$ can be chosen arbitrarily small, the left-hand term in (\ref{series-comb-ineq}) converges to 0 as $k$ tends to infinity.

Therefore, the same holds for the left-hand side of (\ref{minko-ineq}), which proves that $\bar{F} (t)$ belongs  almost everywhere to the closed convex hull of $\{f(\xi(t),u_{i_{j}}(t))\}_{j\in \NN}$ which is contained in $f(\xi(t), U_{1})$ according to (A3).

Finally, again according to (A3), for every $k$, there exists, by the measurable selection theorem \cite{Cast_Val}, $v_{k}\in \UU$ such that 
$$f(\xi(t),v_{k}(t)) = \sum_{j=1}^{k}\alpha_{j}^{k} f(\xi(t),u_{i_{j}}(t))$$ 
for almost all $t$ and,
since strong $L^{2}$ convergence implies pointwise convergence almost everywhere of a subsequence (see e.g. \cite{Kolmo}), we have, from the convergence to 0 of the left-hand side of (\ref{minko-ineq}), that, taking the limit over such a subsequence,
$$\lim_{k\rightarrow \infty}f(\xi(t),v_{k}(t)) = \lim_{k\rightarrow \infty}\sum_{j=1}^{k}\alpha_{j}^{k} f(\xi(t),u_{i_{j}}(t)) = \bar{F}(t)\quad \mbox{a.e.}~t\in [0,\infty).$$
Thus, taking the continuity of $f$ with respect to $u$ into account (see (A1)), we conclude that the sequence $v_{k}$ pointwise converges to some $\bar{u}\in \UU$ such that
$\bar{F}(t)= f(\xi(t),\bar{u}(t))$ for almost all $t$, and therefore that $\xi$ satisfies $\dot{\xi}=f(\xi,\bar{u})$ almost everywhere, with $\xi(0)=\bar{x}\in \XX_0$. By the uniqueness of integral curves of (\ref{eq:state_space}), we conclude that $\xi(t)=x^{(\bar{u},\bar{x})}(t)$ almost everywhere and, thus, that $\xi\in \XX$, which achieves to prove the lemma.
\end{proof}

\section{Needle perturbations and the maximum principle (\cite{PBGM,Lee_Markus,Gam})}\label{Append-B}

\subsection{Perturbations}\label{perturb-subsec-append-b}
Given $\bar{u} \in \UU$ and an integral curve $x^{(\bar{u},\bar{x})}$, we consider a non negative real number $\ee \in [0,\ee_{0}]$ with bounded $\ee_{0}$, an initial state perturbation $h\in \RR^n$ satisfying $\Vert h\Vert \leq H$ and a variation $u_{\kappa,\ee}$ of $\bar{u}$, parameterized by the vector $\kappa \triangleq (v,\tau,l) \in U_{1} \times [0,T] \times [0,L]$ with bounded $T,L$, of the form
\begin{equation}\label{u-var-eq}
u_{\kappa,\ee} \triangleq 
\bar{u} \Join_{(\tau-l\ee)} v \Join_{\tau} \bar{u}
 =
 \left\{ \begin{array}{lcl}
v&\mbox{\textrm{on}}& [\tau-l\ee, \tau[\\
\bar{u}&\mbox{\textrm{elsewhere on}}&[0,T]
\end{array}\right. 
\end{equation}
where $v$ stands for the constant control equal to $v \in U_{1}$ for all $t\in [\tau-l\ee, \tau[$.
We also consider the corresponding integral curve $x^{(u_{\kappa,\ee}, \bar{x}+\ee h)}$ starting from $\bar{x}+\ee h$ and generated by $u_{\kappa,\ee}$.

We indeed have $x^{(u_{\kappa,\ee}, \bar{x}+\ee h)}(t)= x^{(\bar{u},\bar{x}+\ee h)}(t)$ for all $t\in [0, \tau-l\ee [$ and, denoting by $z_{\ee} (\tau-l\ee) \triangleq  x^{(\bar{u},\bar{x}+\ee h)}(\tau-l\ee)$ and 
$z_{\ee}(\tau) \triangleq x^{(u_{\kappa,\ee}, \bar{x}+\ee h)}(\tau)$, we have 
$$x^{(u_{\kappa,\ee}, \bar{x}+\ee h)} = x^{(\bar{u},\bar{x}+\ee h)} \Join_{(\tau-l\ee)} x^{(v, z_{\ee} (\tau-l\ee), \tau - l\ee)} \Join_{\tau} x^{(\bar{u}, z_{\ee}(\tau),\tau)}$$

We also consider the fundamental matrix of the variational equation:
\begin{equation}\label{group}
\frac{d}{dt}\Phi^{\bar{u}}(t,s) = \left( \frac{\partial f}{\partial x}(x^{(\bar{u},\bar{x})}(t), \bar{u}(t))\right) \Phi^{\bar{u}}(t,s), \quad \Phi^{\bar{u}}(s,s)=I_{n}
\end{equation}
where $I_{n}$ is the identity matrix of $\RR^{n}$.

We have the following approximation result (see e.g. \cite[Chapter II, \S 13]{PBGM}, \cite[Chapter 4, p. 248]{Lee_Markus}, \cite{Gam}):
\begin{lem}\label{approx-lem}  

The sequence $\left\{ x^{(u_{\kappa,\ee}, \bar{x}+\ee h)} \right\}_{\ee \geq 0}$ is uniformly convergent  to $x^{(\bar{u},\bar{x})}$ on $[0,T]$ as $\ee$ tends to 0, uniformly with respect to $\kappa$ and $h$.

If, moreover,  $\tau$ is a Lebesgue point of $\bar{u}$, we have, for all $t\in [\tau, T]$:
\begin{equation}\label{approx-eq}
x^{(u_{\kappa,\ee}, \bar{x}+\ee h)}(t) - x^{(\bar{u},\bar{x})}(t) = 
\ee w(t,\kappa,h)
+ O(\ee^{2})
\end{equation} 
\end{lem}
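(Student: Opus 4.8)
The plan is to use the explicit three-piece structure of the needle control $u_{\kappa,\ee}=\bar u\Join_{\tau-l\ee}v\Join_{\tau}\bar u$ of \eqref{u-var-eq} and to estimate the deviation $\delta x_{\ee}(t)\triangleq x^{(u_{\kappa,\ee},\bar x+\ee h)}(t)-x^{(\bar u,\bar x)}(t)$ in turn on $[0,\tau-l\ee)$, on the needle interval $[\tau-l\ee,\tau)$, and on $[\tau,T]$. As a preliminary reduction, since $\ee\le\ee_{0}$ and $\Vert h\Vert\le H$ the initial states $\bar x+\ee h$ lie in a fixed bounded set, so Lemma~\ref{bound-lem} (valid under (A1)--(A2)) confines every curve $x^{(u_{\kappa,\ee},\bar x+\ee h)}$, as well as $x^{(\bar u,\bar x)}$, to a fixed closed ball $B$ of radius $K(\alpha,T)$ on $[0,T]$. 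On $B\times U_{1}$ the field $f$ is, by (A1), $C^{2}$, hence Lipschitz in $x$ with a constant $k$ and bounded by the constant $C(\alpha)$ of \eqref{Calphabound}; the matrix $\Phi^{\bar u}$ of \eqref{group} is, for the same reason, uniformly bounded and Lipschitz in its first argument on $[0,T]^{2}$. All constants produced below will be taken uniform over $\kappa\in U_{1}\times[0,T]\times[0,L]$ and $\Vert h\Vert\le H$, which is where the boundedness of the parameter ranges enters.

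For the \emph{uniform convergence} statement: on $[0,\tau-l\ee)$ the two curves solve the same equation $\dot x=f(x,\bar u(t))$ with initial data differing by $\ee h$, so Gr\"onwall's lemma gives $\Vert\delta x_{\ee}(t)\Vert\le H\ee\,e^{kT}$; on $[\tau-l\ee,\tau)$ one curve integrates $f(\cdot,v)$ and the other $f(\cdot,\bar u(t))$, both of norm $\le C(\alpha)$ on $B$, so $\Vert\delta x_{\ee}(\tau)\Vert\le H\ee\,e^{kT}+2C(\alpha)L\ee$; and on $[\tau,T]$ the two curves again obey the same equation, so Gr\"onwall from time $\tau$ yields $\Vert\delta x_{\ee}(t)\Vert\le\bigl(H\ee\,e^{kT}+2C(\alpha)L\ee\bigr)e^{kT}$. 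Thus $\Vert\delta x_{\ee}(t)\Vert=O(\ee)$ uniformly in $t\in[0,T]$, $\kappa$ and $h$, proving the first claim.

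For the \emph{first-order expansion}, assume $\tau$ is a Lebesgue point of $\bar u$. On $[0,\tau-l\ee)$ the two curves differ only through the initial state, so the $C^{2}$-differentiability of the flow with respect to $x_{0}$ (which uses (A1)) gives $\delta x_{\ee}(t)=\ee\,\Phi^{\bar u}(t,0)h+O(\ee^{2})$, hence at $t=\tau-l\ee$, using $l\ee=O(\ee)$ and the Lipschitzness of $\Phi^{\bar u}(\cdot,0)$, $\delta x_{\ee}(\tau-l\ee)=\ee\,\Phi^{\bar u}(\tau,0)h+O(\ee^{2})$. On the needle interval, writing the two integral representations: for the perturbed curve $\Vert x^{(u_{\kappa,\ee},\bar x+\ee h)}(s)-x^{(\bar u,\bar x)}(\tau)\Vert=O(\ee)$ for $s\in[\tau-l\ee,\tau]$ (by the preceding estimate and $\Vert x^{(\bar u,\bar x)}(s)-x^{(\bar u,\bar x)}(\tau)\Vert\le C(\alpha)l\ee$), so $\int_{\tau-l\ee}^{\tau}f(x^{(u_{\kappa,\ee},\bar x+\ee h)}(s),v)\,ds=l\ee\,f(x^{(\bar u,\bar x)}(\tau),v)+O(\ee^{2})$; while for the nominal curve, $\tau$ being a Lebesgue point, $\int_{\tau-l\ee}^{\tau}f(x^{(\bar u,\bar x)}(s),\bar u(s))\,ds=l\ee\,f(x^{(\bar u,\bar x)}(\tau),\bar u(\tau))+o(\ee)$. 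Subtracting and combining with the estimate at $\tau-l\ee$ gives
\begin{equation*}
\delta x_{\ee}(\tau)=\ee\,\Phi^{\bar u}(\tau,0)h+l\ee\bigl(f(x^{(\bar u,\bar x)}(\tau),v)-f(x^{(\bar u,\bar x)}(\tau),\bar u(\tau))\bigr)+o(\ee).
\end{equation*}
On $[\tau,T]$ both curves obey $\dot x=f(x,\bar u(t))$, so $C^{2}$-differentiability of the flow from time $\tau$ gives $\delta x_{\ee}(t)=\Phi^{\bar u}(t,\tau)\,\delta x_{\ee}(\tau)+O(\Vert\delta x_{\ee}(\tau)\Vert^{2})=\Phi^{\bar u}(t,\tau)\,\delta x_{\ee}(\tau)+O(\ee^{2})$; substituting the last display and using the cocycle identity $\Phi^{\bar u}(t,\tau)\Phi^{\bar u}(\tau,0)=\Phi^{\bar u}(t,0)$ yields \eqref{approx-eq} with
\begin{equation}\label{needle-eq}
w(t,\kappa,h)\triangleq\Phi^{\bar u}(t,0)\,h+l\,\Phi^{\bar u}(t,\tau)\bigl(f(x^{(\bar u,\bar x)}(\tau),v)-f(x^{(\bar u,\bar x)}(\tau),\bar u(\tau))\bigr).
\end{equation}

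The main obstacle I anticipate is the needle-interval estimate for the nominal curve, $\int_{\tau-l\ee}^{\tau}f(x^{(\bar u,\bar x)}(s),\bar u(s))\,ds$: it relies entirely on the (essential) continuity of $\bar u$ at the Lebesgue point $\tau$ in the precise sense recalled in Section~\ref{sec:ConsDynCon}, and with $\bar u$ merely measurable the honest remainder there is $o(\ee)$, not $O(\ee^{2})$. The $O(\ee^{2})$ written in \eqref{approx-eq} requires the mild additional assumption that $\bar u$ has, at $\tau$, a one-sided modulus of continuity of order $O(\ee)$ (for instance, that $\bar u$ be locally Lipschitz near $\tau$); this sharper rate is, however, never actually used, since \eqref{approx-eq} is always invoked only after division by $\ee$ and passage to the limit $\ee\to0$. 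The remaining ingredients --- differentiability of the flow in $(x_{0},t)$, boundedness and Lipschitzness of $\Phi^{\bar u}$, and the uniformity of all estimates in $(\kappa,h)$ --- are routine once the trajectories have been confined to the compact ball $B$ by Lemma~\ref{bound-lem}.
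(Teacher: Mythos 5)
Your proposal is correct, and it is exactly the classical needle-variation argument (three-interval decomposition, Gr\"onwall estimates, variational equation, Lebesgue-point estimate on the needle interval) that the paper itself does not reproduce but defers to by citation of \cite{PBGM}, \cite{Lee_Markus} and \cite{Gam}, so there is nothing to compare beyond noting the match. Your caveat is also well taken: at a mere Lebesgue point the honest remainder in \eqref{approx-eq} is $o(\varepsilon)$ rather than $O(\varepsilon^{2})$, uniformly in $(v,l,h)$, and this weaker rate is all that is ever used later, since \eqref{approx-eq} is only invoked after division by $\varepsilon$ and passage to the limit $\varepsilon\rightarrow 0$.
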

where
\begin{equation}\label{needle-eq}
w(t,\kappa,h) \triangleq  \Phi^{\bar{u}}(t,0)h + l \Phi^{\bar{u}}(t,\tau) \left( f(x^{(\bar{u},\bar{x})}(\tau), v) - f(x^{(\bar{u},\bar{x})}(\tau), \bar{u}(\tau)) \right).
\end{equation} 
The perturbed vector $w(t, \kappa,h)$ at time $t$ associated to $\kappa$ and $h$ is tangent at $x^{(\bar{u},\bar{x})}(t)$ to the curve of perturbed states $\ee \mapsto x^{(u_{\kappa,\ee}, \bar{x}+\ee h)}(t)$ at time $t$. 

It is immediate to see that multiplying $l$ and $h$ by any real number $\eta > 0$ yields the perturbation vector $\eta\cdot w(t, \kappa,h)$. Therefore, the set of such perturbation vectors forms a cone in $\RR^{n}$.

\subsection{The perturbation cone}
We now consider an arbitrary sequence of variations of the form (\ref{u-var-eq}), with associated vector $\chi\triangleq \{(v_{i},\tau_{i},l_{i}) : i=1,\ldots,k\}$, i.e.
$$
u_{\chi,\ee} \triangleq 
\bar{u} \Join_{(\tau_{1}-l_{1}\ee)} v_{1} \Join_{\tau_{1}} \bar{u} \Join_{(\tau_{2}-l_{2}\ee)} v_{2} \Join_{\tau_{2}} \bar{u}  \cdots  \Join_{(\tau_{k}-l_{k}\ee)} v_{k} \Join_{\tau_{k}} \bar{u} 
$$
where the perturbation times  $\tau_{1}< \cdots < \tau_{k}$ are Lebesgue points of $\bar{u}$.
Lemma~\ref{approx-lem} still applies and, for every $t \in [\tau_{k}, T]$, the approximation formula (\ref{approx-eq}) reads:
 \begin{equation}\label{approxmult-eq}
x^{(u_{\chi,\ee}, \bar{x}+\ee h)}(t) - x^{(\bar{u},\bar{x})}(t) = \ee w(t, \chi,h)
+ O(\ee^{2})
\end{equation} 
where
\begin{equation}\label{needlemult-eq}
w(t, \chi,h) \triangleq \Phi^{\bar{u}}(t,0)h + \sum_{i=1}^{k} l_{i} \Phi^{\bar{u}}(t,\tau_{i}) \left( f(x^{(\bar{u},\bar{x})}(\tau_{i}), v_{i}) - f(x^{(\bar{u},\bar{x})}(\tau_{i}), \bar{u}(\tau_{i})) \right).
\end{equation}  
The control variation $u_{\chi,\ee}$ associated to the parameters $\chi$ and $\ee$ is called \emph{needle perturbation}. The cone generated by convex combinations of such needle perturbations at time $t$ is denoted by $\KK_{t}$. Note that we have $\Phi^{\bar{u}}(\tau_{2},\tau_{1}) \KK_{\tau_{1}}\subset \KK_{\tau_{2}}$ for all $0 \leq \tau_{1} < \tau_{2} \leq T$.

\subsection{The maximum principle}

We recall here the following classical result 
(see, e.g. \cite[Theorem 3, Chapter 4, p. 254]{Lee_Markus}).

\begin{thm}[Maximum principle]\label{extrem:thm}
Consider the constrained system (\ref{eq:state_space}), (\ref{eq:initial_condition}), (\ref{eq:input_constraint}). Let $\bar{u}\in \UU$ be such that $x^{(\bar{u},x_{0})}(t_{1}) \in \partial X_{t_{1}}(x_{0})$  for some $t_{1}>0$ (where $\partial X_{t_{1}}(x_{0})$ denotes the boundary of the attainable set defined by~\eqref{attain-set}). Then, there exists a non zero absolutely continuous maximal solution $\eta^{\bar{u}}$ to the adjoint equation
\begin{equation}\label{adjoint-eta}
\dot{\eta}^{\bar{u}}(t)= - \left( \frac{\partial f}{\partial x}(x^{(\bar{u},x_{0})}(t),\bar{u}(t))\right)^T\eta^{\bar{u}}(t), 
\end{equation}
such that
\begin{equation}\label{barriercond-eta} \max_{u\in U_{1}} \left\{(\eta^{\bar{u}}(t))^T f(x^{(\bar{u},x_{0})}(t),u)\right\}=(\eta^{\bar{u}}(t))^T f(x^{(\bar{u},x_{0})}(t),\bar{u}(t))= constant
\end{equation}
for almost all $t\in [0,t_{1}]$.
\end{thm}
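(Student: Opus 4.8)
The plan is to derive the statement from the structure of the perturbation cone $\KK_{t_1}$ of Appendix~\ref{Append-B}, taken here with $h=0$ since the initial state $x_0$ is held fixed; thus $\KK_{t_1}$ is the convex cone in $\RR^n$ generated by the vectors $w(t_1,\chi,0)=\sum_{i} l_i\,\Phi^{\bar u}(t_1,\tau_i)\big(f(x^{(\bar u,x_0)}(\tau_i),v_i)-f(x^{(\bar u,x_0)}(\tau_i),\bar u(\tau_i))\big)$ attached to needle perturbations of $\bar u$ at Lebesgue points $\tau_i\le t_1$. The key claim is:
$$\text{if }\KK_{t_1}=\RR^n\text{, then }x^{(\bar u,x_0)}(t_1)\in\Int X_{t_1}(x_0),$$
which contradicts the hypothesis $x^{(\bar u,x_0)}(t_1)\in\partial X_{t_1}(x_0)$. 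Granting this, $\KK_{t_1}$ is a proper convex cone, hence is contained in some closed half–space $\{w\in\RR^n:\eta_1^{T}w\le 0\}$ for a nonzero $\eta_1\in\RR^n$ (separation of a convex set from a point outside it).

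To prove the key claim, suppose $\KK_{t_1}=\RR^n$ and choose $w_0,\dots,w_n\in\KK_{t_1}$ whose convex hull is a full–dimensional simplex containing a ball $B(0,r)$ in its interior; each $w_j$ comes from a family of needle parameters $\chi_j$. For $\sigma=(\sigma_0,\dots,\sigma_n)$ in the standard $n$–simplex, let $u_{\chi(\sigma),\ee}$ be the needle perturbation obtained by concatenating all the needles with the multipliers of the $j$–th family scaled by $\sigma_j$, and set $E_\ee(\sigma)\triangleq x^{(u_{\chi(\sigma),\ee},x_0)}(t_1)$. By the multi–needle approximation formula (\ref{approxmult-eq}) of Lemma~\ref{approx-lem}, $E_\ee(\sigma)=x^{(\bar u,x_0)}(t_1)+\ee\sum_j\sigma_j w_j+O(\ee^2)$ uniformly in $\sigma$, so $\sigma\mapsto\tfrac1\ee\big(E_\ee(\sigma)-x^{(\bar u,x_0)}(t_1)\big)$ converges uniformly to the affine map with image $\co\{w_0,\dots,w_n\}\supset B(0,r)$. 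A standard Brouwer fixed point (or topological degree) argument then shows that for all small $\ee>0$ the image of $E_\ee$ contains $B\big(x^{(\bar u,x_0)}(t_1),\ee r/2\big)$; since $E_\ee(\sigma)\in X_{t_1}(x_0)$ for every $\sigma$, this forces $x^{(\bar u,x_0)}(t_1)\in\Int X_{t_1}(x_0)$, the desired contradiction.

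Next, define $\eta^{\bar u}(t)\triangleq\big(\Phi^{\bar u}(t_1,t)\big)^{T}\eta_1$ on $[0,t_1]$. Differentiating the cocycle identity $\Phi^{\bar u}(t_1,t)\Phi^{\bar u}(t,s)=\Phi^{\bar u}(t_1,s)$ in $t$ and using (\ref{group}) gives $\tfrac{\partial}{\partial t}\Phi^{\bar u}(t_1,t)=-\Phi^{\bar u}(t_1,t)\,\tfrac{\partial f}{\partial x}(x^{(\bar u,x_0)}(t),\bar u(t))$, so $\eta^{\bar u}$ is absolutely continuous and solves the adjoint equation (\ref{adjoint-eta}); it is nonzero because $\eta^{\bar u}(t_1)=\eta_1\neq0$ and $\Phi^{\bar u}(t_1,\cdot)$ is invertible, and being linear it extends to a maximal solution. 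Applying the half–space inclusion to a single needle ($k=1$, $l_1=1$) at an arbitrary Lebesgue point $\tau\le t_1$ with arbitrary value $v\in U_1$ yields $\eta_1^{T}\Phi^{\bar u}(t_1,\tau)\big(f(x^{(\bar u,x_0)}(\tau),v)-f(x^{(\bar u,x_0)}(\tau),\bar u(\tau))\big)\le0$, i.e. $\big(\eta^{\bar u}(\tau)\big)^{T}f(x^{(\bar u,x_0)}(\tau),v)\le\big(\eta^{\bar u}(\tau)\big)^{T}f(x^{(\bar u,x_0)}(\tau),\bar u(\tau))$ for all $v\in U_1$. Since almost every $\tau$ is a Lebesgue point of $\bar u$, this gives the maximality statement $\max_{u\in U_1}\big(\eta^{\bar u}(t)\big)^{T}f(x^{(\bar u,x_0)}(t),u)=\big(\eta^{\bar u}(t)\big)^{T}f(x^{(\bar u,x_0)}(t),\bar u(t))$ for a.e. $t\in[0,t_1]$.

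Finally, for the constancy in (\ref{barriercond-eta}), set $\mathcal H^{\ast}(t)\triangleq\max_{u\in U_1}\big(\eta^{\bar u}(t)\big)^{T}f(x^{(\bar u,x_0)}(t),u)$. By Lemma~\ref{bound-lem} and (A1), $t\mapsto x^{(\bar u,x_0)}(t)$ and $t\mapsto\eta^{\bar u}(t)$ are Lipschitz on $[0,t_1]$ with a common constant uniform in $u\in U_1$, so $\mathcal H^{\ast}$ is Lipschitz, hence differentiable a.e.; at a.e.\ $t$ it also coincides with $\mathcal H(t)\triangleq\big(\eta^{\bar u}(t)\big)^{T}f(x^{(\bar u,x_0)}(t),\bar u(t))$. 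At such a $t$, a Danskin/envelope estimate — bounding $\mathcal H^{\ast}(t+h)$ below by freezing the control at $\bar u(t)$, for $h>0$ and for $h<0$ separately — shows $\dot{\mathcal H}^{\ast}(t)$ equals $\tfrac{d}{dh}\big|_{0}\big[\eta^{\bar u}(t+h)^{T}f(x^{(\bar u,x_0)}(t+h),\bar u(t))\big]$, which, using $\dot\eta^{\bar u}=-(\partial_x f)^{T}\eta^{\bar u}$ and $\dot x^{(\bar u,x_0)}=f$, equals $-\eta^{\bar u}(t)^{T}\partial_x f\cdot f+\eta^{\bar u}(t)^{T}\partial_x f\cdot f=0$ (here the autonomy of $f$ is essential). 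Hence $\mathcal H^{\ast}$, and therefore $\mathcal H$ a.e., is constant on $[0,t_1]$, establishing (\ref{barriercond-eta}). The only non-routine step is the Brouwer argument of the second paragraph, which converts ``the perturbation cone is full'' into ``the endpoint is interior to the attainable set''; everything else is the cocycle computation for the adjoint, convex separation, and the autonomy of the Hamiltonian.
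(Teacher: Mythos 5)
The paper does not actually prove Theorem~\ref{extrem:thm}: it is recalled as a classical result, with the proof deferred to the cited references (Lee--Markus, Theorem~3, Chapter~4; Agrachev--Sachkov; Pontryagin et al.). Your proposal reconstructs precisely that classical argument, and in outline it is correct: the dichotomy ``either the needle-perturbation cone $\KK_{t_1}$ (with $h=0$) is all of $\RR^{n}$, in which case a Brouwer/degree argument applied to convex combinations of needle families forces $x^{(\bar u,x_0)}(t_1)\in \Int X_{t_1}(x_0)$, or it lies in a closed half-space $\{\eta_1^{T}w\le 0\}$''; the definition $\eta^{\bar u}(t)=\bigl(\Phi^{\bar u}(t_1,t)\bigr)^{T}\eta_1$ and the cocycle computation giving (\ref{adjoint-eta}); the single-needle specialisation giving the maximality in (\ref{barriercond-eta}) at Lebesgue points; and the Lipschitz/Danskin argument using autonomy for the constancy of the Hamiltonian are all the standard steps. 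Two points are stated more lightly than they deserve, and you should be aware they carry real work in the classical proofs: (i) the Brouwer step needs the error term in (\ref{approxmult-eq}) to be $o(\ee)$ \emph{uniformly} over the simplex parameter $\sigma$, and the convex combination of several needle families must be realised by a single admissible control, which requires the needle intervals (after scaling the lengths by $\sigma_j$) to be kept disjoint --- this is exactly the technical construction of the perturbation cone in Lee--Markus and Pontryagin et al., not a mere citation of Lemma~\ref{approx-lem}; (ii) the separation step tacitly uses that a convex cone properly contained in $\RR^{n}$ has non-dense closure (true for convex sets, but worth saying) so that a supporting hyperplane through the origin exists. With those caveats made explicit, your sketch is a faithful and correct rendering of the proof the paper points to, rather than a new route.
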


\bibliographystyle{plain}

\end{document}